\definecolor{fgreen}{RGB}{44,144, 14}
\renewenvironment{proof}{{\bfseries Proof.}}{\qed}
\numberwithin{equation}{section} 
\newtheorem{theorem}{Theorem}[section] 
\newtheorem{proposition}[theorem]{Proposition} 
\newtheorem{corollary}[theorem]{Corollary} 
\newtheorem{lemma}[theorem]{Lemma} 
\theoremstyle{definition}
\newtheorem{definition}[theorem]{Definition} 
\newtheorem{remark}[theorem]{Remark}
\def\R{\mathbb R}
\def\C{\mathbb C}
\def\H{\mathbb H}
\def\ib{\mathbf {i}}
\def\jb{\mathbf {j}}
\def\kb{\mathbf {k}}
\def\N{\mathbb N}
\def\R{\mathbb R}
\newcommand{\GL}{\mathrm{GL}}
\def\P{\mathbb P}
\def\R{\mathbb {R}}
\def\C{\mathbb {C}}
\def\N{\mathbb {N}}
\def\H{\mathbb {H}}
\def\ib{\mathbf {i}}
\def\jb{\mathbf {j}}
\def\GL{\rm GL}
\newcommand{\defref}[1]{Definition~\ref{#1}}
\newcommand{\thmref}[1]{Theorem~\ref{#1}}
\newcommand{\lemref}[1]{Lemma~\ref{#1}}
\newcommand{\remref}[1]{Remark~\ref{#1}}
\newcommand{\propref}[1]{Proposition~\ref{#1}}
\newcommand{\corref}[1]{Corollary~\ref{#1}}
\begin{document}

	\title[Classification and Decomposition  in  $\mathrm{PSL}(3,\mathbb{H})$ ]{Classification and Decomposition of Quaternionic Projective Transformations }
	\author[S.  Dutta, K. Gongopadhyay and T.  Lohan]{Sandipan Dutta, Krishnendu Gongopadhyay and 
		Tejbir Lohan}
	
	\address{Indian Institute of Science Education and Research (IISER) Mohali,
		Knowledge City,  Sector 81, S.A.S. Nagar 140306, Punjab, India}
	\email{sandipandutta98@gmail.com }
	
	\address{Indian Institute of Science Education and Research (IISER) Mohali,
		Knowledge City,  Sector 81, S.A.S. Nagar 140306, Punjab, India}
	\email{krishnendug@gmail.com, krishnendu@iisermohali.ac.in}

	\address{Indian Institute of Science Education and Research (IISER) Mohali,
		Knowledge City,  Sector 81, S.A.S. Nagar 140306, Punjab, India}
	\email{tejbirlohan70@gmail.com}

	\subjclass[2020]{Primary 53A20; Secondary 51M10, 20E45, 15B33}
	
	\keywords{Reversibility, reversible elements, strongly reversible elements, quaternionic matrices,  dynamical types}
	
	\date{\today}
	\begin{abstract}
We consider the projective linear group $\mathrm{PSL}(3,\H)$.  We have investigated the reversibility problem in this group and use the reversibility to offer an algebraic characterization of the dynamical types of $\mathrm{PSL}(3,\H)$.   We further decompose elements of $\mathrm{SL}(3,\H)$ as products of simple elements, where an element $g$ in ${\rm SL}(3, \H)$ is called \emph{simple} if it is conjugate to an element of ${\rm SL}(3, \R)$.  We have also revisited real projective transformations and following Goldman's ideas,  have offered a complete classification for elements of 
 $\mathrm{SL}(3,\R)$. 
\end{abstract} 

	\maketitle 
	
\section{Introduction}\label{sec-intro}
In classical two-dimensional hyperbolic geometry,  the linear group ${\rm SL}(2, \R)$ acts as the orientation-preserving isometries and the action is given by the Möbius transformations. Based on the fixed-point dynamics, the isometries are classified into three mutually exclusive types: elliptic, parabolic, and hyperbolic. It is a remarkable phenomenon that this dynamical classification can be characterized algebraically by the traces of the matrices representing the isometries. The situation in dimension three is very similar where the isometries are represented by the group ${\rm SL}(2, \C)$;  see  \cite[Theorems 4.3.1 and 4.3.4]{be}. The isometries in four and five-dimensional hyperbolic geometries have similar representations over the real quaternions $\H$; see \cite{Wi}. In these dimensions, the isometry group may be identified with groups of $2 \times 2$ quaternionic matrices. The above characterization for isometries of the four-dimensional hyperbolic space has been generalized by Cao, Parker, and Wang in \cite{cpw}. Algebraic characterization for isometries of the five-dimensional hyperbolic space has been obtained by Gongopadhyay \cite{KG}, and Parker and Short \cite{PS} independently using different approaches.    For other attempts towards algebraic characterization of the dynamical types of quaternionic Möbius transformations; see \cite{Ca},  \cite{Fo}. The above classifications also have their counterparts in complex and quaternionic hyperbolic geometries; see  \cite{CG}, \cite{Goldman}, \cite{KG2}, \cite{GPP}. In particular,  we mention Goldman's classification for ${\rm SU}(2,1)$, which acts as the isometry group of the two-dimensional complex hyperbolic space. In \cite[Theorem 6.2.4]{Goldman},  Goldman considered the discriminant of the characteristic polynomial of an element of ${\rm SU}(2,1)$ and characterized the dynamical types of isometries using this polynomial. \\

In the three-dimensional (real) hyperbolic geometry, the group ${\rm SL}(2, \C)$ acts on the boundary of the hyperbolic space, which may be identified with $ \P_{\C}^1$, and this action extends to the entire three-dimensional ball. The action of ${\rm SL}(2, \C)$ on $ \P_{\C}^1$ is significant to complex dynamics to study the behaviour of iterated sequences of functions. Generalizing this idea, Seade and Verjovsky initiated an investigation of dynamical properties of ${\rm SL}(3, \C)$ action on $ \P_{\C}^2$; see \cite{SV01}, \cite{SV02}. It is possible to classify the elements of ${\rm SL}(3, \C)$ using their dynamics into elliptic, parabolic, and loxodromic subclasses. Essentially such a classification follows from the Iwasawa decomposition of an element in ${\rm SL}(3, \C)$. In \cite{Na}, Navarette formalized different types of elements in ${\rm SL}(3, \C)$ using their Jordan forms and dynamical action on $ \P_{\C}^2$. Navarette further characterized these subclasses using a polynomial coming from the discriminant of the characteristic polynomial, as in Goldman's theorem mentioned above. \\
	
Let ${\rm SL}(n,  \H)$ be the group of $n \times n$ matrices over the quaternions  with 
quaternionic determinant one.  In five-dimensional (real) hyperbolic geometry,   the boundary of the hyperbolic space may be identified with the one-dimensional quaternionic projective space $ \P_{\H}^1$. The group ${\rm SL}(2, \H)$ acts on $ \P_{\H}^1$ by the quaternionic Möbius transformations that identifies ${\rm PSL}(2, \H)$ with the group of conformal diffeomorphisms of $ \P_{\H}^1$.  In \cite{KG},  \cite{PS}, this action has been used to classify the isometries in terms of numerical invariants. \\

It is a natural question to ask for higher dimensional generalizations of such classification. The problem of finding conjugacy invariants for ${\rm SL}(3, \H)$ is also a problem of independent interest. In \cite{KL}, Kim and Luo tackled this problem. They used ideas similar to  Gongopadhyay \cite{KG} and  Foreman \cite{Fo} to obtain formulas for the coefficients of characteristic polynomials of complex representations of quaternionic matrices as conjugacy invariants. \\

Recently, the authors have initiated an investigation of dynamics of ${\rm SL}(3,  \H)$ action on $ \P_{\H}^2$; see \cite{dgt}. The elements of ${\rm SL}(3, \H)$ can be divided into three broad classes, viz. elliptic, parabolic, and loxodromic, by their dynamics, and into further subclasses by combining them with the Jordan forms; see  \cite{dgt}. It is a natural problem to ask for algebraic characterization of these types using conjugacy invariants. However, because of non-commutativity, the quaternionic linear algebra differs significantly from the complex linear algebra. Accordingly, finding conjugacy invariants is more subtle for quaternionic matrices. One may try to use the same method as in \cite{KG} to algebraically characterize the dynamical types of elements of ${\rm SL}(3, \H)$. However, this turns out to be complicated, especially when the eigenvalues belong to mutually disjoint classes. \\

We offer an approach to overcome this difficulty in this paper. The approach is to use the notion of  \textit{reversibility}.   Recall that an element $g$ in a group $G$ is called \emph{reversible} if $g$ is conjugate to $g^{-1}$.  This notion is closely related to \emph{strongly reversible} elements, which are products of at most two involutions,  i.e.,  $g$ is conjugate to $g^{-1}$ in $G$ by an element of order at most $2$. The reversible elements are also known as `real' or `reciprocal' elements in the literature; see \cite{ST}, \cite{Sa}. It has been a problem of broad interest to find the equivalence of these notions and to classify such elements. Despite many works, concrete classification of reversibility depends very much on the local structure of a group, and it has been known only for a few families of analytic groups. We refer to the monograph \cite{FS} for a survey of reversibility in geometry and dynamics.    In \cite{LFS},   Lavicka et al. classified the reversible and strongly reversible elements in the group  $ \mathrm{SL}(2,\mathbb{H})$ using their action on $ \P_{\H}^1$.   Understanding reversibility in the context of groups $ \mathrm{SL}(3,\mathbb{H})$ and $ \mathrm{PSL}(3,\mathbb{H}) := \mathrm{SL}(3,\H)/ \{\pm \mathrm{I}_3\}$  is a problem of independent interest.\\

In this paper, we have investigated reversibility in $ \mathrm{SL}(3,\mathbb{H})$, and offer algebraic characterizations of elements of ${\rm PSL}(3, \H) $ using reversibility of the elements. We consider the group $\mathrm{PSL}(3,\H)$ of projective automorphisms of $ \P_{\H}^2$.    Every element $[g]$ in ${\rm PSL}(3, \H)$ has two lifts $g$ and $-g$ in ${\rm SL}(3, \H)$. For this purpose, we first classify reversible and strongly reversible elements in ${\rm SL}(3, \H)$. However, that does not completely classify reversible elements in ${\rm PSL}(3, \H)$. Further classifying elements $g$ in ${\rm SL}(3, \H)$ such that $hgh^{-1}=-g^{-1}$ for some $h \in {\rm SL}(3, \H)$, we obtain a complete classification of reversible elements in ${\rm PSL}(3, \H)$. In particular, we prove the following theorem. \\
  
\begin{theorem}\label{thm-main-equiv-PSL-1}
An element of $ \mathrm{PSL}(3,\mathbb{H})$ is reversible if and only if it is strongly reversible.
\end{theorem}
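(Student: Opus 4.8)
The plan is to reduce everything to a concrete classification, obtained earlier in the paper, of reversible elements of $\mathrm{PSL}(3,\H)$ up to conjugacy, and then to exhibit for each class a reversing element that is an involution. Since reversibility in $\mathrm{PSL}(3,\H)$ splits into two cases for a lift $g \in \mathrm{SL}(3,\H)$ — either $hgh^{-1}=g^{-1}$ or $hgh^{-1}=-g^{-1}$ for some $h$ — I would first invoke the classification of reversible and strongly reversible elements in $\mathrm{SL}(3,\H)$ together with the auxiliary classification of those $g$ with $hgh^{-1}=-g^{-1}$; both are assumed to be in place before this theorem. The backward implication (strongly reversible $\Rightarrow$ reversible) is trivial, so the content is the forward direction.

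For the forward direction, first I would reduce to a normal form: conjugacy classes in $\mathrm{SL}(3,\H)$ are governed by a quaternionic Jordan form, where each eigenvalue can be normalized to lie in $\C$ with non-negative imaginary part. So it suffices to treat a representative $g$ in Jordan form that is known (from the earlier classification) to be reversible in $\mathrm{PSL}(3,\H)$, and to produce an involution $\tau \in \mathrm{PSL}(3,\H)$ with $\tau g \tau^{-1} = g^{-1}$ as elements of $\mathrm{PSL}(3,\H)$ (i.e. $\tau g \tau^{-1} = \pm g^{-1}$ as matrices). The key structural fact to exploit is that in the quaternionic setting one has extra reversing elements unavailable over $\C$: for an eigenvalue $\lambda = re^{i\theta}$ with $r\neq 1$, conjugation by $\jb$ sends $\lambda$ to $\bar\lambda = \lambda^{-1}\cdot r^2$, and more usefully the diagonal-block structure lets one swap a $\lambda$-eigenline with a $\lambda^{-1}$-eigenline by a permutation-type involution, while within a single eigenline conjugation by a suitable unit quaternion can invert an eigenvalue of modulus one. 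The standard building blocks are: (i) the involution swapping two coordinates, handling a pair of blocks with mutually inverse eigenvalues; (ii) for a norm-one eigenvalue $e^{i\theta}$, the unit quaternion $\jb$ (or $q = \cos\tfrac{\phi}{2}\jb + \sin\tfrac{\phi}{2}\kb$ chosen appropriately), which conjugates $e^{i\theta}$ to $e^{-i\theta}$; (iii) for unipotent/Jordan blocks, the antidiagonal involution $\mathrm{antidiag}(1,-1,1)$-type matrix that reverses a single Jordan block, exactly as in the $\mathrm{SL}(n,\R)$ case. One then checks case by case — using the list of reversible classes already established — that these blocks can be assembled into a single $\tau$ of order $\le 2$ in $\mathrm{PSL}(3,\H)$, possibly at the cost of a central sign, which is harmless in $\mathrm{PSL}$.

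The main obstacle I anticipate is the case where $g$ is reversible in $\mathrm{PSL}(3,\H)$ only because of the twisted relation $hgh^{-1}=-g^{-1}$ (and not $hgh^{-1}=g^{-1}$ in $\mathrm{SL}(3,\H)$): here one must show the twisting element $h$ can be chosen so that $h^2 = \pm \mathrm{I}_3$, i.e. $[h]$ has order $\le 2$ in $\mathrm{PSL}(3,\H)$. This is where the non-commutativity genuinely bites — naively composing a swap with a scalar-sign and with a quaternionic conjugation can produce an element of order $4$ rather than $2$. The resolution should be a dimension/parity argument peculiar to $3\times 3$: because $-\mathrm{I}_3$ has determinant $-1$ in the real sense but quaternionic determinant issues force the eigenvalue multiset of $g$ and $-g^{-1}$ to match only in constrained configurations, the relevant $g$ lie in a short explicit list (e.g. one norm-one eigenvalue forced to be $\pm i$, or a specific unipotent type), and for each such $g$ one can write down $\tau$ explicitly and verify $\tau^2 = \pm \mathrm{I}_3$ by direct computation. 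So the proof is ultimately a finite case check; the work is in organizing the cases via the earlier classification theorems and in the explicit choice of quaternionic conjugators for the norm-one eigenvalue blocks, while the twisted case is the one requiring the most care.
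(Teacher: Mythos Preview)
Your proposal is correct and follows essentially the same approach as the paper: split into the cases $hgh^{-1}=g^{-1}$ and $hgh^{-1}=-g^{-1}$, and in each case exhibit, for every Jordan form on the relevant finite list, an explicit reversing element $h$ with $h^2=\pm\mathrm{I}_3$ (a skew-involution built from $\jb$ in the first case, an honest involution in the twisted case, where indeed an eigenvalue $\ib$ is forced), so that $h$ descends to an involution in $\mathrm{PSL}(3,\H)$. The paper packages the first case as Proposition~\ref{prop-rev-prod-skew-inv} and the twisted case as Lemma~\ref{lem-rev-PSL-type-2} together with Proposition~\ref{prop-rev-type-2-PSL-skew-inv-prod}, and the final proof is the one-line observation that skew-involutions become involutions in $\mathrm{PSL}(3,\H)$.
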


 \thmref{thm-rev-GL(3,H)}  and 	\lemref{lem-rev-PSL-type-2} will give a complete classification of reversible elements in the group $ \mathrm{PSL}(3,\mathbb{H})$.	 After we have classified reversibility in ${\rm PSL}(3, \H)$,  we can have a broad algebraic classification of the elements of ${\rm PSL}(3, \H)$ as follows.  We refer to \defref{def-embedd-phi}  for the  embedding $\Phi$ from ${\rm GL}(3, \H)$ to ${\rm GL}(6, \C)$.\\

\begin{corollary}\label{cor-rev-selfdual-char}
Suppose $[g]$ is an element of ${\rm PSL}(3,\H)$, where $g$ is a lift of $[g]$ in ${\rm SL}(3,\H)$. Let $\Phi(g)$ be the embedding of $g$ in ${\rm SL}(6,\C)$ and let $\chi_{\Phi(g)}$ be its characteristic polynomial, given by:
$$\chi_{\Phi (g)}=x^6-c_5 x^5 + c_4 x^4 -c_3 x^3+c_2 x^2 -c_1 x +1,$$
	where all the coefficients are real numbers.  Then $g$ is reversible in  ${\rm SL}(3, \H)$ if and only if $c_5=c_1$ and $c_4=c_2$. 
\end{corollary}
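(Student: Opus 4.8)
The plan is to reduce the statement about reversibility in $\mathrm{SL}(3,\H)$ to a purely algebraic condition on the complex characteristic polynomial $\chi_{\Phi(g)}$, by exploiting the known fact (to be used from \thmref{thm-rev-GL(3,H)}) that $g$ is reversible in $\mathrm{SL}(3,\H)$ if and only if $g$ is conjugate to $g^{-1}$, together with the faithfulness and spectral behaviour of the embedding $\Phi\colon \mathrm{GL}(3,\H)\to\mathrm{GL}(6,\C)$. The central observation is that $\Phi(g)$ and $\Phi(g^{-1})=\Phi(g)^{-1}$ are conjugate in $\mathrm{GL}(6,\C)$ precisely when they have the same characteristic polynomial and the same Jordan structure; and for the "only if" direction one simply notes that conjugate matrices have equal characteristic polynomials, so $\chi_{\Phi(g)}=\chi_{\Phi(g)^{-1}}$. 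Writing out $\chi_{\Phi(g)^{-1}}(x)$ in terms of the coefficients of $\chi_{\Phi(g)}(x)$ via the standard reciprocal-polynomial identity $\chi_{A^{-1}}(x)=\dfrac{x^n}{\det A}\,\chi_A(1/x)$, and using $\det\Phi(g)=1$ (which holds because $g\in\mathrm{SL}(3,\H)$ forces the Dieudonné determinant, hence $\det\Phi(g)$, to be $1$), one gets $\chi_{\Phi(g)^{-1}}(x)=x^6-c_1x^5+c_2x^4-c_3x^3+c_4x^2-c_5x+1$. Comparing coefficients with $\chi_{\Phi(g)}(x)$ yields exactly $c_5=c_1$ and $c_4=c_2$ (the $c_3$ coefficient matches automatically). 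So the "only if" direction is essentially a one-line computation once the determinant normalization is in place.

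For the converse, suppose $c_5=c_1$ and $c_4=c_2$. Then the above identity shows $\chi_{\Phi(g)}=\chi_{\Phi(g)^{-1}}$, i.e. $\Phi(g)$ and $\Phi(g)^{-1}$ are \emph{potentially} conjugate but we must promote equality of characteristic polynomials to actual conjugacy inside $\mathrm{SL}(3,\H)$. The key point is that $\Phi(g)$ and $\Phi(g)^{-1}$ automatically have the same Jordan block structure: if $\lambda$ is an eigenvalue of $\Phi(g)$ with a Jordan block of size $k$, then $1/\lambda$ is an eigenvalue of $\Phi(g)^{-1}$ with a Jordan block of the same size $k$, and the symmetry of the characteristic polynomial forces the multiset of eigenvalues of $\Phi(g)$ to be invariant under $\lambda\mapsto 1/\lambda$. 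One must also track the extra structure imposed by the image of $\Phi$: eigenvalues of $\Phi(g)$ come in conjugate pairs $\{\mu,\bar\mu\}$ (this is a standard property of the embedding, coming from the fact that $\Phi(g)$ commutes with a suitable conjugate-linear structure), and the $\H$-conjugacy class of $g$ is determined by the eigenvalue data up to this symmetry. Combining "closed under $\lambda\mapsto\bar\lambda$" (from being in the image of $\Phi$) with "closed under $\lambda\mapsto 1/\lambda$" (from the coefficient symmetry), one checks case by case on the dynamical type (elliptic / parabolic / loxodromic, and the finer Jordan subtypes recorded in \cite{dgt}) that $g$ and $g^{-1}$ lie in the same $\mathrm{SL}(3,\H)$-conjugacy class, hence $g$ is reversible. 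Here one invokes \thmref{thm-rev-GL(3,H)} to conclude reversibility in $\mathrm{SL}(3,\H)$ rather than merely in $\mathrm{GL}(3,\H)$, checking that the conjugating element can be taken with determinant one.

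The main obstacle I anticipate is the converse direction's bookkeeping: translating the single pair of scalar identities $c_5=c_1$, $c_4=c_2$ into a statement that the \emph{full} eigenvalue/Jordan data of $g$ is symmetric under inversion. The subtlety is that a quaternionic eigenvalue $q$ of $g$ contributes a whole similarity class $[q]$ (equivalently a conjugate pair of complex eigenvalues of $\Phi(g)$), and inversion interacts with this class structure; moreover when eigenvalues are real or come in degenerate configurations, the correspondence between coefficient symmetry and conjugacy needs care, e.g. one must rule out the possibility that $c_5=c_1, c_4=c_2$ hold "by accident" without $g\sim g^{-1}$. I expect this to be handled by a finite case analysis over the classification of \thmref{thm-rev-GL(3,H)}: for each reversible type one verifies the coefficient identities directly, and for each non-reversible type one exhibits a coefficient obstruction, so that the biconditional follows by exhaustion. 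The $\det\Phi(g)=1$ normalization and the reality of all $c_i$ (already asserted in the statement) are what make the reciprocal-polynomial symmetry clean enough for this to go through.
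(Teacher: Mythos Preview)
Your proposal is correct and follows essentially the same route as the paper: the forward direction comes from the reciprocal-polynomial identity $\chi_{\Phi(g)^{-1}}(x)=x^6\,\chi_{\Phi(g)}(1/x)$ together with $\det\Phi(g)=1$, and the converse is obtained by exhausting the conjugacy types classified in \thmref{thm-rev-GL(3,H)} (and \remref{remark_non-rev-SL(3,H)}), checking that every non-reversible Jordan form has a non-self-dual characteristic polynomial. The paper's proof is just the one-line ``follows from \thmref{thm-rev-GL(3,H)}'', which encodes exactly this case analysis; your write-up is simply more explicit about the mechanism and correctly anticipates that the only subtlety (promoting equality of characteristic polynomials to equality of Jordan data) is dispatched by the finite classification rather than by a general argument.
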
 

The coefficients $c_i$ of the characteristic polynomial in the above corollary can be expressed in terms of the entries of $g$; see  \cite{KL}.  Thus, elements of ${\rm PSL}(3, \H)$ can be classified into two broad classes: reversible and non-reversible.  For reversible elements in ${\rm PSL}(3, \H)$, one can algebraically classify different subclasses by using the same method as in \cite{CG}. \\

There is another approach to classify the elements of ${\rm SL}(2, \H)$ in terms of some numerical invariants which are not necessarily conjugacy invariants, but up to conjugacy, one can fix those invariants for certain representatives of the conjugacy classes;  see  \cite{Ca}, \cite{PS}, \cite{Wi}. Parker and Short used such invariants to relate an arbitrary element of ${\rm SL}(2, \H)$ to elements of simpler types, mostly real matrices, and then to classify them algebraically using other conjugacy invariants. In the second part of this paper, we follow this approach and decompose any arbitrary element of ${\rm SL}(3, \H)$ into products of {\it simple elements}. \\

\begin{definition}\label{def-simple-element}
	An element $g$ of ${\rm SL}(3, \H)$ is called \emph{simple} if it is conjugate to an element of ${\rm SL}(3, \R)$. 
\end{definition}

Here, ${\rm SL}(3, \R)$ is a component of the subgroup of ${\rm SL}(3, \H)$ consisting of matrices with real entries and determinant  $1$. It is identified with the usual special linear group over the real numbers. We prove the following theorem.\\

\begin{theorem}\label{th:simple4} Every element of ${\rm SL}(3,\H)$ can be written as a product of four simple elements.
\end{theorem}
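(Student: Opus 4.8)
The plan is to reduce an arbitrary $g\in{\rm SL}(3,\H)$ to a product of matrices each conjugate into ${\rm SL}(3,\R)$, working through the Jordan form of $g$. By the quaternionic Jordan normal form, up to conjugacy every element of ${\rm GL}(3,\H)$ is block-diagonal with Jordan blocks whose eigenvalues can be normalized to lie in $\C$ with non-negative imaginary part; moreover the quaternionic determinant-one condition constrains the product of the (moduli of the) eigenvalues. The key observation I would exploit is that a single $3\times 3$ or $2\times 2$ quaternionic Jordan block is typically \emph{not} simple on its own, but a product of two such blocks (or of a block with a well-chosen diagonal or real matrix) can be arranged to have real eigenvalues and a real Jordan structure, hence be conjugate into ${\rm SL}(3,\R)$. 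So the first step is to set up the case division by Jordan type: (i) $g$ diagonalizable with three eigenvalues, (ii) one $2\times 2$ Jordan block plus one eigenvalue, (iii) one $3\times 3$ Jordan block.

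The second step handles the diagonalizable case, which should be the model for the others. Writing $g\sim{\rm diag}(\lambda_1,\lambda_2,\lambda_3)$ with $\lambda_i\in\C$, I want to factor this as a product of diagonal quaternionic matrices each of which is conjugate into ${\rm SL}(3,\R)$. The point is that a diagonal matrix ${\rm diag}(\mu,\bar\mu,\nu)$ with $\nu$ real and $|\mu|^2\nu=1$ is already conjugate to a real matrix (the $2\times 2$ block $\mathrm{diag}(\mu,\bar\mu)$ is conjugate to a rotation-scaling in ${\rm GL}(2,\R)$), and more generally one shows that any diagonal element splits as a product of at most two such "real-type" diagonal elements by distributing the three eigenvalues into conjugate pairs plus reals; the constraint $\prod|\lambda_i|=1$ is what lets the real determinants work out. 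A parallel argument, using that a quaternion $\lambda=re^{\ib\theta}$ can be written as a product of two quaternions each conjugate to a real number is not literally true, so instead I would use the freedom to conjugate and to absorb scalars: decompose $g$ as $g=g_1g_2$ where $g_1$ is chosen diagonal with carefully selected eigenvalues so that both $g_1$ and $g_1^{-1}g$ have eigenvalue data realizable over $\R$ (equal moduli paired, or all moduli one so one lands in the compact part ${\rm Sp}(1)\hookrightarrow$ something conjugate to ${\rm SO}$-type real matrices). This already costs two simple factors.

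The third step extends this to the Jordan-block cases, and this is where I expect the main obstacle. For a $2\times 2$ or $3\times 3$ quaternionic Jordan block with eigenvalue $\lambda$, one must realize the unipotent part over $\R$ simultaneously with getting real eigenvalues; the nilpotent superdiagonal is harmless (a real nilpotent matrix is already in ${\rm SL}(3,\R)$ after adjusting by a real diagonal of determinant one), so the real work is again on the eigenvalue/scalar part, but now one cannot freely diagonalize. The trick I would try: multiply $g$ by a suitable simple element $s$ so that $gs$ becomes diagonalizable with controllable eigenvalues — e.g. $s$ a rank-one-perturbation-type element or a Weyl-group-type permutation matrix times a diagonal — thereby reducing case (ii) and (iii) to case (i) at the cost of one extra factor, landing at the stated bound of four (two for the reduced diagonalizable piece, one for $s$, and one to fix up the residual real determinant, with careful bookkeeping to not exceed four). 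The delicate point throughout is the quaternionic determinant (the Dieudonné determinant) and the real determinants of the pieces: one must check that at each split the factors can be taken in ${\rm SL}$ rather than merely ${\rm GL}$, which requires a scalar $\ib$-conjugation lemma of the form "any $\mu\in\H^\times$ is a product of two elements each conjugate in $\H^\times$ to an element of $\GL(1,\R)=\R^\times$," together with its matrix analogue; verifying these normalizations compatibly across all three Jordan cases is the bulk of the argument and the part most likely to force an honest case-by-case computation rather than a slick uniform one.
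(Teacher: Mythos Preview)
Your overall strategy (reduce to Jordan form, then factor each Jordan type into pieces conjugate into ${\rm SL}(3,\R)$) is exactly what the paper does. But the proposal has a real gap at the point where you assert that the diagonalizable case ``already costs two simple factors.'' You never give a construction, and your own parenthetical observation (a non-real quaternion is \emph{not} a product of two quaternions each conjugate to a real) shows why the na\"ive entrywise splitting fails. In fact the paper needs \emph{three} simple factors for a unit-modulus diagonal element and \emph{four} in general; the mechanism is not ``distribute eigenvalues into conjugate pairs plus reals'' but rather a concrete polar split
\[
\mathrm{diag}(\lambda,\mu,\xi)=\mathrm{diag}(|\lambda|,|\mu|,|\xi|)\cdot \mathrm{diag}(e^{\ib\theta},e^{\ib\phi},e^{\ib\psi}),
\]
followed by an explicit $2$--$1$ regrouping of the phase part that invokes Parker--Short's theorem for ${\rm SL}(2,\H)$ (embedded block-diagonally in ${\rm SL}(3,\H)$) to handle the residual $2\times 2$ piece in two simple factors. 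That external ${\rm SL}(2,\H)$ input is the key lemma you are missing; without it (or a replacement) your count of two is unsupported, and then your Jordan-block bookkeeping $2+1+1=4$ collapses.

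Your plan for the non-diagonalizable cases (``multiply by a simple $s$ to make $gs$ diagonalizable, then apply the diagonal case'') is also only heuristic. The paper does not do this; it gives, for each Jordan shape, an explicit factorization into three or four upper-triangular or diagonal matrices each of which is visibly simple by \propref{prop-simple-classification} or \corref{cor-simple-uppertriangular-real-eigenvalue}. For instance the $3\times 3$ Jordan block at $e^{\ib\theta}$ is written directly as a product of four simple matrices (two with a repeated non-real eigenvalue and a real third entry, one real diagonal, one unipotent with real eigenvalues). If you want to salvage your route you would need to exhibit, for each Jordan block, a concrete simple $s$ with $gs$ diagonalizable \emph{and} prove the resulting diagonal element needs only two more simple factors---neither of which you have done.
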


In Table \ref{table:1}, we have summarized the number of simple elements required to express different Jordan forms in $ \mathrm{SL}(3, \mathbb{H})$ as products of simple elements in $\mathrm{SL}(3,\H)$.
\begin{table}[ht]
	\centering{
		\caption{Simplicity of Jordan forms in $\mathrm{SL}(3,\mathbb{H})$}
		\begin{tabular}{|p{2.7cm}|p{1.6cm}|p{2.7cm}|p{1.6cm}|p{2.7cm}|p{1.6cm}|}
			\hline
			\multicolumn{6}{|c|}{\textbf{Different Jordan forms and their simplicity  in} $\mathrm{SL}(3,\mathbb{H})$} \\
			\hline 
			Jordan form & Number of  simple matrices & Jordan form & Number of  simple matrices & Jordan form & Number of  simple matrices \\
			\hline
			$\begin{pmatrix}
			e^{\ib \theta} & 0 & 0\\0 & e^{\ib\phi} & 0\\0 & 0 & e^{\ib\psi}
			\end{pmatrix}$ & \hspace{.6cm} $3$ & $\begin{pmatrix}
			e^{\ib \theta} &  1 & 0\\0 & e^{\ib \theta} & 0\\0 & 0 & e^{\ib\psi}
			\end{pmatrix}$ &\hspace{.6cm}  $3$ & $\begin{pmatrix}
			e^{\ib \theta} &  1 & 0\\0 & e^{\ib \theta} &  1\\0 & 0 & e^{\ib \theta}
			\end{pmatrix}$  & \hspace{.6cm} $4$\\ 
			\hline
			$\begin{pmatrix}
			\lambda & 0 & 0\\0 & \mu & 0\\0 & 0 & \xi	\end{pmatrix}$	 & \hspace{.6cm} $4$ & $\begin{pmatrix}
			\lambda & 1 & 0\\0 & \lambda & 0\\0 & 0 & \xi	\end{pmatrix}$ & \hspace{.6cm} $4$ &  & \\
			\hline
		\end{tabular}\label{table:1}}
\end{table}

One can algebraically classify simple matrices of ${\rm SL}(3, \H)$. For this, one can use ideas from Goldman's classification \cite[Section 1.7]{Go2}, where hyperbolic elements in ${\rm SL}(3, \R)$ have been classified using the \textit{discriminant} of the characteristic polynomial. We refined the elements of ${\rm SL}(3, \R)$ using their Jordan forms and refined their dynamical classification; see  Definition \ref{def:classification}. Then we extend Goldman's classification to all elements of ${\rm SL}(3, \R)$. We prove the following result.\\

\begin{theorem} \label{th:class1} 
Suppose $A\in  \mathrm{SL}(3,\mathbb{R})$ and  define the function $$f(x,y)=-x^2y^2+4(x^3+y^3)-18xy+27,$$ where $ x=\mathrm{tr}(A)$ and $y=\mathrm{tr}(A^{-1})$. Let $d$ be the maximum degree among the different factors of the minimal polynomial of $A$. Then the following statements are true.
\begin{enumerate}[(i)]
			\item $A$ is regular loxodromic if and only if $f(x,y)>0$.
			\item $A$ is regular elliptic if and only if $f(x,y)<0$ and $x=y$.
			\item $A$ is screw loxodromic if and only if $f(x,y)<0$ and $x\neq y$.
			\item $A$ is homothety if and only if $f(x,y)=0$, $x\neq y$ and $d=1$.
			\item $A$ is loxo-parabolic if and only if $f(x,y)=0$, $x\neq y$ and $d \neq 1$.
			\item $A$ is reflection (or elliptic-reflection) if and only if $f(x,y)=0,\; x=y$ and $d =1$.
			\item $A$ is unipotent if and only if $f(x,y)=0,\;x=y=3$ and $d \neq 1$.
			\item $A$ is ellipto-parabolic if and only if $f(x,y)=0,\;x=y \neq 3$ and $d \neq 1$.
\end{enumerate}
	
\end{theorem}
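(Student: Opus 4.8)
The plan is to work entirely with the characteristic polynomial of $A$. Writing $x=\mathrm{tr}(A)$ and $y=\mathrm{tr}(A^{-1})$, since $\det A=1$ the characteristic polynomial is $\chi_A(t)=t^3-xt^2+yt-1$, and its discriminant is (up to a positive constant) exactly the quantity $f(x,y)=-x^2y^2+4(x^3+y^3)-18xy+27$. So the sign of $f(x,y)$ detects whether $\chi_A$ has three distinct real roots ($f>0$), a single real root and a pair of complex-conjugate roots ($f<0$), or a repeated root ($f=0$). This is the classical cubic discriminant dichotomy, and I would first record it as a lemma, either citing Goldman \cite[Section 1.7]{Go2} or giving the two-line verification that $f(x,y)=\mathrm{disc}(\chi_A)$. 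The eigenvalues of $A$ (over $\mathbb{R}$, i.e.\ including complex ones) together with the Jordan structure determine the dynamical type by Definition \ref{def:classification}, so the whole theorem reduces to translating ``sign of discriminant + multiplicity data'' into ``dynamical type'', which is what the parameter $d$ (the largest degree among distinct irreducible factors of the minimal polynomial) is there to encode.

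Next I would split into the three cases $f>0$, $f<0$, $f=0$. When $f(x,y)>0$ the three eigenvalues are real, distinct, nonzero with product $1$; such an element is diagonalizable over $\mathbb{R}$ with distinct real eigenvalues, which is precisely the definition of regular loxodromic — this gives (i). When $f(x,y)<0$ there is one real eigenvalue $\lambda$ and a conjugate pair $re^{\pm i\phi}$ with $r\neq 0$; here $A$ is automatically semisimple (distinct eigenvalues), and the sub-case split is governed by whether the real eigenvalue has modulus $1$ or not. One checks that $x=y$ forces $\lambda^2 r^2=1$ together with $\lambda=\lambda r^2$... more cleanly: $x-y=\mathrm{tr}(A)-\mathrm{tr}(A^{-1})$ and using $\lambda r^2=1$ (from $\det=1$) one shows $x=y$ holds exactly when $|\lambda|=r=1$, i.e.\ all eigenvalues lie on the unit circle, giving regular elliptic in (ii); otherwise one eigenvalue is off the circle, which is screw loxodromic, giving (iii). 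The computation $x-y = (\lambda - 1/\lambda) + \big(re^{i\phi}-\tfrac1r e^{-i\phi}\big)+\big(re^{-i\phi}-\tfrac1r e^{i\phi}\big)$, combined with $\lambda r^2=1$, is the routine part here.

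The case $f(x,y)=0$ is the substantive one and is where the parameter $d$ enters. Now $\chi_A$ has a repeated real root; either $\chi_A=(t-\alpha)^2(t-\beta)$ with $\alpha^2\beta=1$ and $\alpha\neq\beta$, or $\chi_A=(t-1)^3$ (the only way to get a triple root with product $1$ and real; $\alpha=\beta$ forces $\alpha^3=1$, $\alpha=1$). In the first situation the minimal polynomial is either $(t-\alpha)(t-\beta)$ (so $d=1$, $A$ semisimple: this is a homothety, with eigenvalue $\alpha$ of multiplicity $2$ — case (iv)) or $(t-\alpha)^2(t-\beta)$ (so $d=2\neq1$, a single nontrivial Jordan block attached to $\alpha$: this is loxo-parabolic — case (v)). In the triple-root situation $\alpha=\beta=1$ does not arise since we assumed $\alpha\neq\beta$; the genuine triple root $(t-1)^3$ case is handled separately and splits by $d$: $d=1$ gives $A=I$ which one must fold into the ``reflection'' class, while $d\neq1$ gives a unipotent element (one $2\times2$ or one $3\times3$ Jordan block) — case (vii). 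For (vi) and (viii) I need the refinement that allows a repeated eigenvalue equal to a root of unity but not genuinely elliptic: when $x=y$ and $f=0$, combining $\alpha^2\beta=1$ with $x=y$ forces $\alpha,\beta\in\{1,-1\}$, so the eigenvalues are $\pm1$; $d=1$ is then a reflection or elliptic-reflection (case (vi)), and $d\neq1$ with $x=y\neq3$ means the repeated eigenvalue is $-1$ carrying a Jordan block — ellipto-parabolic (viii) — whereas $x=y=3$ pins the triple eigenvalue to $1$, the unipotent case (vii). The main obstacle is purely bookkeeping: one must verify that the conditions ``$x=y$'' versus ``$x\neq y$'' on the $f=0$ locus correctly separate eigenvalues-on-the-circle from eigenvalues-off-the-circle, and that ``$x=y=3$'' isolates exactly the eigenvalue-$1$ triple; both follow from solving the small polynomial system $\{\alpha^2\beta=1,\ 2\alpha+\beta=x,\ \alpha^2+2\alpha\beta=y,\ x=y\}$, which has only the solutions $(\alpha,\beta)\in\{(1,1),(-1,-1),(-1,1)\}$. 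Once these algebraic facts are in hand, matching each case to Definition \ref{def:classification} is immediate, and exhaustiveness/mutual exclusivity follows because the listed conditions partition the $(x,y,d)$ data.
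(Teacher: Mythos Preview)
Your approach is essentially the paper's: identify $f(x,y)$ as the discriminant of $\chi_A(t)=t^3-xt^2+yt-1$, use the standard sign trichotomy for real cubics, invoke the criterion ``$x=y$ iff all eigenvalues have unit modulus'' (which the paper isolates as a separate lemma, \lemref{lem:xy}, proved in full rather than sketched), and on the $f=0$ locus distinguish the Jordan types via $d$. One small slip: in your polynomial system the pair $(\alpha,\beta)=(-1,-1)$ does not satisfy $\alpha^2\beta=1$; the only real solutions are $(1,1)$ and $(-1,1)$, though your conclusion that the eigenvalues lie in $\{\pm1\}$ is unaffected.
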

In Table \ref{table:2}, we have summarized the results obtained in	\thmref{th:class1}.
	\begin{table}[ht]
		\centering{
			
			\caption{Classification of the elements of $\mathrm{SL}(3,\mathbb{R})$}	
			\begin{tabular}{|p{2.1cm}|p{2.2cm}|p{2.3cm}|p{2.1cm}|p{1.9cm}|p{2.2cm}|}
				\hline
				
				\multicolumn{6}{|c|} {\textbf{Classification of} $A\in \mathrm{SL}(3,\mathbb{R})$} \\
				\hline 
				\multicolumn{2}{|c|}{\textbf{Loxodromic}} &  \multicolumn{2}{|c|}{\textbf{Parabolic}} & \multicolumn{2}{|c|}{\textbf{Elliptic}} \\
				\hline
				Regular loxodromic & $f(x,y)>0$ & Vertical translation (unipotent)&  $f(x,y)=0$, $x=y=3$, $d =2$ & Regular elliptic & $f(x,y)<0$, $x=y$ \\ 
				\hline
				Screw loxodromic & $f(x,y)<0$, $x\neq y$ & Non-vertical translation (unipotent) &  $f(x,y)=0$, $x=y=3$, $d =3$ & Reflection & $f(x,y)=0$, $x=y$, $d =1$ \\ 
				\hline
				Homothety & $f(x,y)=0,$ $x\neq y$, $d =1$  &   Ellipto-parabolic &$f(x,y)=0$, $x=y\neq 3$,  $d =2$ &&   \\
				\hline 
				Loxo-parabolic &$f(x,y)=0$, $x\neq y$, $d=2$  &  &  & & \\
				
				\hline
			\end{tabular}\label{table:2}}
	\end{table}
	
\textbf{Structure of the paper.} 
The paper is organized as follows. In Section $\ref{sec-prel}$, we fix some notation and recall necessary background. We discuss some useful preliminary results in Section $\ref{sub-preliminary-results}$. In Section $\ref{sec-rev}$, we investigate the reversibility problem in the group ${\rm SL}(3, \H)$. We prove Theorem $\ref{thm-main-equiv-PSL-1}$, which establishes the equivalence between reversible and strongly reversible elements of the projective linear group ${\rm PSL}(3, \H)$, in Section $\ref{sec-rev-projective-group}$. Section $\ref{sec-decom}$ deals with the decomposition of ${\rm SL}(3, \H)$ into simple elements, and we prove Theorem $\ref{th:simple4}$. We give the classification of the elements of ${\rm SL}(3, \R)$ in Section $\ref{sec:sl3r}$, and finally, we summarize the classifications in Section $\ref{sec-final-class}$. 

\section{Preliminaries}\label{sec-prel}
In this section, we fix some notation and recall some necessary background that will be used throughout this paper. Let $\H:= \R + \R \ib + \R \jb + \R \kb$ denote the division algebra of Hamilton’s quaternions, where $\ib^2=\jb^2=\kb^2=\ib  \jb   \kb = -1$.   For an elaborate discussion on the linear algebra over the quaternions;  see  \cite{rodman},  \cite{FZ}.

\begin{definition}\label{def-eigen-M(n,H)}
Let $A \in  \mathrm{M}(3,\H)$, the algebra of $3 \times 3$ matrices over $\H$.  A non-zero vector $v \in \H^3 $ is said to be a (right) eigenvector of $A$ corresponding to a  (right) eigenvalue  $\lambda \in \H $ if the equality $ A v = v\lambda $ holds.
\end{definition}

Eigenvalues of $A$ occur in similarity classes, and each similarity class of eigenvalues contains a unique complex number with non-negative imaginary part. Here, instead of similarity classes of eigenvalues, we will consider the \textit{unique complex representatives} with non-negative imaginary parts as eigenvalues unless specified otherwise. 

\begin{definition}\label{def-embedd-phi}
Let $A \in  \mathrm{M}(3,\H)$ and write $ A = A_1 + A_2 \jb $,  where $ A_1,  A_2 \in  \mathrm{M}(3,\C)$.  We denote the \textit{complex adjoint} of $A$ by $\Phi(A)$, which is defined as follows:
	$$ \Phi(A)=  \begin{pmatrix} A_1   &  A_2 \\
		- \overline{A_2} & \overline{A_1}  \\ 
	\end{pmatrix}.$$
Note that  the map $A \longmapsto \Phi(A) $ is an embedding from  $\mathrm{M}(3,\H)$ to $\mathrm{M}(6,\C)$;  see \cite{rodman}, \cite{FZ}.  
\end{definition}
\begin{definition}\label{def:sdet}
	For $A \in  \mathrm{M}(3,\H)$, the determinant (resp.  characteristic polynomial) of $A$ is denoted by $ {\rm det_{\H}(A)}$ (resp.  $\chi_{\H}(A)$) and given  by the  determinant (resp.  characteristic polynomial) of the corresponding complex adjoint matrix $ \Phi(A)$, i.e., $ {\rm det_{\H}(A)}:=  {\rm det(\Phi(A))}$ and $\chi_{\H}(A) := \chi_{\Phi(A)}$; see   \cite{FZ}.  
\end{definition} 

Consider the Lie groups 
$\mathrm{GL}(3,\H) := \{ g \in  \mathrm{M}(3,\H) \mid {\rm det_{\H}}(g) \neq 0 \}$ and 
$  \mathrm{SL}(3,\H) := \{ g \in \mathrm{GL}(3,\H) \mid {\rm det_{\H}}(g) = 1 \}$. Note that $ {\rm det_{\H}}(A)  \in \R \ \hbox{and} \  {\rm det_{\H}}(A) \geq 0 $ for all $A  \in  \mathrm{M}(3,\H)$; see  {\cite[Proposition 4.2]{FZ}} and  {\cite [Theorem 5.9.2.(6)] {rodman}}. 
Recall  the Jordan decomposition for matrices over $\H$,  which gives conjugacy classification of elements of $\mathrm{GL}(3,\mathbb{H})$; see {\cite[Theorem 5.5.3]{rodman}}.   More precisely, up to conjugacy,  we can assume that every element of $\mathrm{GL}(3,\mathbb{H})$  is one of the following matrices:
\begin{enumerate}[(i)]
	\item $ \begin{pmatrix}
		r e^{\ib \theta} &0 &0 \\
		0 & s e^{\ib \phi}& 0\\
		0 & 0 & te^{\ib \psi}
	\end{pmatrix} $, where $r,s,t \in \mathbb{R}^{+}:= \{x\in \R \mid x >0\}$ and $\theta, \phi, \psi \in [0,\pi]$,

\vspace{3mm}
	\item $\begin{pmatrix}
		r e^{\ib \theta} &1 &0 \\
		0 & r e^{\ib \theta}& 0\\
		0 & 0 & t e^{\ib \psi}
	\end{pmatrix}$, where $r, t \in \mathbb{R}^{+}$ and $\theta,\psi \in [0,\pi]$,

\vspace{3mm}
	
	\item $\begin{pmatrix}
		r e^{\ib \theta} &1 &0 \\
		0 & r e^{\ib \theta}& 1\\
		0 & 0 & r e^{\ib \theta}
	\end{pmatrix}$,  where $r \in \mathbb{R}^{+}$ and $\theta \in [0,\pi]$.
\end{enumerate}

\begin{remark}\label{rem-conjugacy-SL(n,H)} Let $g \in \mathrm{GL}(3,\mathbb{H})$ such that $gAg^{-1} =B$,  where $A,B \in \mathrm{M}(3,\mathbb{H})$.
	In view of   {\cite[Proposition 4.2]{FZ}}, we have ${\rm det_{\H}}(g)  \in \R$ and $ {\rm det_{\H}}(g)  >0 $.  Let $ a:= \frac{1}{({\rm det_{\H}}(g))^{1/3}}$. Then $ a{\rm I}_3$ lies in the centre of ${\GL}(3,\H)$. Consider $h:= g(a{\rm I}_3)$.  Then $h \in \mathrm{SL}(3,\mathbb{H})$ such that $hAh^{-1}= B$.  Thus, if two elements of $\mathrm{M}(3,\mathbb{H})$ are conjugate in $\mathrm{GL}(3,\mathbb{H})$, then without loss of generality, we can assume that they are conjugate in $\mathrm{SL}(3,\mathbb{H})$.
\end{remark}

Consider the  projective linear group $\mathrm{PSL}(3,\H) := \mathrm{SL}(3,\H)/ \{\pm \mathrm{I}_3\}$, which acts on the quaternionic projective space  $ \P_{\H}^2$.  In view of the Jordan decomposition of matrices over $\H$,  we have the following well-defined classification for $\mathrm{PSL}(3,\H)$ into three mutually exclusive classes.

\begin{definition}\label{def:classification}
	Let $ [g] \in \mathrm{PSL}(3,\H)$ be a projective transformation, where $g$ is a lift of $[g]$ in $\mathrm{SL}(3,\H)$. Then
	\begin{enumerate}[(i)]
		\item  $[g]$ is called elliptic if  $g$ is (semisimple) diagonalizable and each eigenvalue of $g$ has a unit modulus.
		
		\item  $[g]$ is called loxodromic if $g$  has at least one eigenvalue with a non-unit modulus.
		
		\item  $[g]$ is parabolic if $g$ is non-diagonalizable and each eigenvalue of $g$ has a unit modulus.
	\end{enumerate}
\end{definition}

We can further divide the above classes into more specific subclasses. An elliptic element is \textit{regular elliptic} if all its eigenvalues are distinct. Otherwise, we call it \textit{elliptic-reflection}.
A parabolic element is\textit{ unipotent} if all its eigenvalues are equal to $1$. Otherwise, we call it \textit{ellipto-parabolic} or \textit{ellipto-translation}, depending on whether the corresponding minimal polynomial has a factor of degree $2$ or $3$. Unipotent elements of $\mathrm{SL}(3,\H)$ can be divided into two cases depending on their minimal polynomials. A unipotent element is called \textit{vertical translation} (resp. \textit{non-vertical translation}) if the corresponding minimal polynomial is $(x - 1)^2$ (resp. $(x - 1)^3$). A loxodromic element is \textit{regular loxodromic} if all its eigenvalues have distinct moduli. A loxodromic element is called a \textit{screw loxodromic} if two of its eigenvalues are not equal but have the same modulus. A loxodromic element with two equal eigenvalues is either \textit{homothety} or \textit{loxo-parabolic}, depending on whether it can be diagonalized or not. It is worth noting that the naming conventions for these subclasses have been influenced by similar names found in \cite{dgt}.

\subsection{Preliminary results}\label{sub-preliminary-results}
In this subsection, first, we recall some well-known primary results which follow from the fundamental properties of quaternions. We include their proofs for the sake of completeness. These results are helpful in studying reversibility in $\mathrm{SL}(3,\H)$ and assist in identifying elements of $\mathrm{SL}(3,\H)$ which are reversible but not strongly reversible; see \lemref{lem-non-strong-rev-GL(3,H)}. 

\begin{lemma}  [cf.~{\cite [Lemma 2.3.]{GL}}]\label{lem-ref- lAA- paper}
	Let $\theta \in [0,\pi ]$ and $ \alpha, \beta  \in (-\pi,\pi )$. Then the following statements are true.  
	\begin{enumerate}
		
		\item  \label{lem-basic-reverser-calculation-part-1} 	Let  $a \in \H$ be such that $ a e^{\ib \theta} = e^{-\ib \theta} a $. Then $a \in \begin{cases}
			\C \,  \jb & \text{if  $\theta \neq 0,  \pi$};\\
			\H  & \text{if $ \theta = 0,  \pi $}.
		\end{cases}$
		
		\item Let  $a \in \H$ be such that $ a e^{\ib \theta} = e^{\ib \theta} a $.  Then $a \in   \begin{cases}
			\C \,  & \text{if  $\theta \neq 0,  \pi$};\\
			\H & \text{if $ \theta = 0,  \pi $}.
		\end{cases}$
		
		\item 	Let  $b \in \H$ be such that $b e^{\ib \alpha}= e^{\ib \beta } b$.  If $ \alpha  \neq  \pm \beta $ then $b=0$.
	\end{enumerate}
\end{lemma}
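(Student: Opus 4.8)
The plan is to verify each of the three statements by direct quaternionic computation, writing a generic $a = a_0 + a_1\ib + a_2\jb + a_3\kb \in \H$ and comparing coefficients. First I would fix the basic multiplication rules $\jb\ib = -\ib\jb$, so that for any $c \in \C = \R + \R\ib$ we have $c\jb = \jb\bar c$, and hence $\jb z = \bar z \jb$ for $z \in \C$. It is convenient to split $a = z + w\jb$ with $z, w \in \C$; then $a e^{\ib\theta} = z e^{\ib\theta} + w\jb e^{\ib\theta} = z e^{\ib\theta} + w e^{-\ib\theta}\jb$, while $e^{-\ib\theta} a = e^{-\ib\theta} z + e^{-\ib\theta} w\jb = z e^{-\ib\theta} + w e^{-\ib\theta}\jb$ (using that $z,w$ commute with $e^{-\ib\theta} \in \C$). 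Comparing the $\C$-part and the $\C\jb$-part separately turns each relation into a pair of scalar equations in $\C$.

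For part (1), the equation $a e^{\ib\theta} = e^{-\ib\theta} a$ becomes $z e^{\ib\theta} = z e^{-\ib\theta}$ and $w e^{-\ib\theta} = w e^{-\ib\theta}$. The second equation is automatic. The first gives $z(e^{\ib\theta} - e^{-\ib\theta}) = 0$, i.e. $z \cdot 2\ib\sin\theta = 0$; for $\theta \neq 0, \pi$ this forces $z = 0$, so $a = w\jb \in \C\jb$, while for $\theta = 0$ or $\pi$ we have $e^{\ib\theta} = \pm 1$ central and the relation holds for all $a \in \H$. For part (2), $a e^{\ib\theta} = e^{\ib\theta} a$ becomes $z e^{\ib\theta} = e^{\ib\theta} z$ (automatic) and $w e^{-\ib\theta} = e^{\ib\theta} w$, i.e. $w(e^{-\ib\theta} - e^{\ib\theta}) = 0$, giving $w = 0$ for $\theta \neq 0,\pi$, hence $a = z \in \C$, and all of $\H$ when $\theta = 0, \pi$. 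For part (3), writing $b = z + w\jb$, the relation $b e^{\ib\alpha} = e^{\ib\beta} b$ becomes $z e^{\ib\alpha} = e^{\ib\beta} z$ and $w e^{-\ib\alpha} = e^{\ib\beta} w$; since $\C$ is commutative these read $z(e^{\ib\alpha} - e^{\ib\beta}) = 0$ and $w(e^{-\ib\alpha} - e^{\ib\beta}) = 0$. Under the hypothesis $\alpha \neq \pm\beta$ with $\alpha, \beta \in (-\pi, \pi)$, neither $e^{\ib\alpha} = e^{\ib\beta}$ nor $e^{-\ib\alpha} = e^{\ib\beta}$ can hold, so both $z = 0$ and $w = 0$, giving $b = 0$.

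There is no real obstacle here; the only thing that needs care is bookkeeping the commutation $\jb z = \bar z \jb$ correctly so that the $\C\jb$-components are matched properly, and handling the boundary cases $\theta \in \{0, \pi\}$ separately (where $e^{\ib\theta} = \pm 1$ lies in the centre of $\H$ and every $a$ commutes or anticommutes trivially). I would also note that in part (3) the open-interval hypothesis on $\alpha, \beta$ is exactly what rules out the coincidences $e^{\pm\ib\alpha} = e^{\ib\beta}$ coming from the $2\pi$-periodicity, so the argument would fail without it. Since the statement is cited as essentially Lemma 2.3 of \cite{GL}, I would keep the proof short and self-contained along these lines.
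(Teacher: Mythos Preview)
Your proof is correct and follows essentially the same approach as the paper: decompose $a = z + w\jb$ with $z,w \in \C$, use the relation $\jb \bar{x} = x\jb$ to separate the $\C$- and $\C\jb$-components, and compare coefficients. The paper only writes out part~(1) explicitly and leaves the other parts to the reader, whereas you carry out all three; otherwise the arguments are identical.
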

\begin{proof}
	Here, we will only prove the first part of the lemma. Using similar arguments, one can prove other parts of the lemma.
	Let  $a = z + w \jb$ be such that $a \,  e^{\ib \theta} =  e^{-\ib \theta} \, a$, where $z,w \in \C$.  This implies
	\begin{equation}\label{eq-proof-1-reverser}
		(z + w \jb) \,  e^{\ib \theta} =  e^{-\ib \theta} \,  (z + w \jb).
	\end{equation}
	On comparing both sides of  Equation \eqref{eq-proof-1-reverser} and using $\jb \bar{x} = x \jb$ for all $x \in \C$,  we have
	\begin{equation}
		z e^{\ib \theta} =  e^{-\ib \theta} z  \hbox{ and } w e^{-\ib \theta} =  e^{-\ib \theta} w.
	\end{equation}
	Due to  commutativity of complex numbers, $w e^{-\ib \theta} =  e^{-\ib \theta} w$ holds for all $ w \in \C$.  The proof of \eqref{lem-basic-reverser-calculation-part-1}  now follows from the equation $z e^{\ib \theta} =  e^{-\ib \theta} z $.
\end{proof}

\begin{lemma}\label{lem-basic-reverser-calculation}
	The following statements are true.
	\begin{enumerate}[(i)]
		\item \label{lem-basic-reverser-calculation-part-2} 	Let $\mathcal{R}(e^{\ib \theta}) := \{ a \in  \H \setminus \{0\} \mid a e^{\ib \theta} = e^{-\ib \theta} a \}$, where $\theta \in [0,\pi ]$. Let $ a \in \mathcal{R}(e^{\ib \theta}) \, \cup \, \{0\}$  and $b \in  \H $ such that 
		$a + b \, (e^{\ib \theta}) = (e^{-\ib \theta}) \,  b $.  Then $a =0$ and $ b \in \mathcal{R}(e^{\ib \theta}) \, \cup \, \{0\}$.
		
		\item \label{lem-basic-reverser-calculation-part-3} Let $ a \, (r e^{\ib \theta}) = (s e^{\ib \phi}) \,a$, where $r,s \in \R^{+}:=\{ x \in  \R \mid x >0 \} $, $a \in \H$ and $ \theta, \phi \in [0,\pi ]$.  If  $r \neq s$, then $a =0$.  
	\end{enumerate}
\end{lemma}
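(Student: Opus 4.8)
The plan is to prove both parts by decomposing the quaternionic relations into their complex components via the splitting $a = z + w\jb$ with $z, w \in \C$, exactly as in the proof of \lemref{lem-ref- lAA- paper}, and then reading off the constraints using the commutation/anticommutation rules for complex numbers together with the identity $\jb\bar{x} = x\jb$.

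For part \eqref{lem-basic-reverser-calculation-part-2}, first I would handle the degenerate cases $\theta = 0$ and $\theta = \pi$ separately: there $e^{\ib\theta} = \pm 1$ is central, so the equation $a + b(e^{\ib\theta}) = (e^{-\ib\theta})b$ becomes $a + (\pm 1)b = (\pm 1)b$, forcing $a = 0$; and then $\mathcal{R}(e^{\ib\theta}) \cup \{0\} = \H$ by \lemref{lem-ref- lAA- paper}\eqref{lem-basic-reverser-calculation-part-1}, so the conclusion on $b$ is automatic. For $\theta \neq 0, \pi$, write $a = z_a + w_a\jb$ and $b = z_b + w_b\jb$. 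By \lemref{lem-ref- lAA- paper}\eqref{lem-basic-reverser-calculation-part-1}, membership of $a$ in $\mathcal{R}(e^{\ib\theta}) \cup \{0\}$ means $a \in \C\jb$, i.e. $z_a = 0$, so $a = w_a\jb$. Expanding $a + b e^{\ib\theta} = e^{-\ib\theta} b$ and using $\jb e^{\ib\theta} = e^{-\ib\theta}\jb$, I would separate the $\C$-part and the $\C\jb$-part: the $\C$-part gives $z_b e^{\ib\theta} = e^{-\ib\theta} z_b$, which forces $z_b = 0$ since $e^{\ib\theta} \neq e^{-\ib\theta}$; the $\C\jb$-part gives $w_a + w_b e^{-\ib\theta} = e^{-\ib\theta} w_b$ — wait, more carefully, matching coefficients of $\jb$ after moving everything to complex scalars yields $w_a = 0$ (since the remaining terms in $w_b$ cancel by commutativity of $\C$). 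Hence $a = 0$, and then $b = w_b\jb \in \C\jb = \mathcal{R}(e^{\ib\theta}) \cup \{0\}$ by \lemref{lem-ref- lAA- paper}\eqref{lem-basic-reverser-calculation-part-1} again.

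For part \eqref{lem-basic-reverser-calculation-part-3}, I would again write $a = z + w\jb$ with $z, w \in \C$ and expand $a(re^{\ib\theta}) = (se^{\ib\phi})a$. Using $\jb e^{\ib\theta} = e^{-\ib\theta}\jb$, the $\C$-component gives $z(re^{\ib\theta}) = (se^{\ib\phi})z$ and the $\C\jb$-component gives $w(re^{-\ib\theta}) = (se^{\ib\phi})w$. Taking moduli in $\C$ of both equations yields $r|z| = s|z|$ and $r|w| = s|w|$; since $r \neq s$, both force $|z| = |w| = 0$, hence $a = 0$.

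The routine bookkeeping of coefficients in part \eqref{lem-basic-reverser-calculation-part-2} is the only place requiring care — specifically making sure the inhomogeneous term $a$ lands entirely in the $\C\jb$-component so that the $\C$-component equation is genuinely homogeneous in $z_b$; I expect this coefficient-matching to be the main (though still elementary) obstacle. Part \eqref{lem-basic-reverser-calculation-part-3} is immediate once one passes to moduli of complex numbers.
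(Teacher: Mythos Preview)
Your proposal is correct and follows essentially the same route as the paper's proof: for part~\eqref{lem-basic-reverser-calculation-part-2} both you and the paper reduce to $\theta\in(0,\pi)$, use \lemref{lem-ref- lAA- paper} to write $a\in\C\jb$, split $b=u+v\jb$, and read off $u=0$ and the vanishing of the $\jb$-coefficient of $a$ from the two resulting complex equations. For part~\eqref{lem-basic-reverser-calculation-part-3} the paper is marginally more direct than your argument: rather than first decomposing $a=z+w\jb$ and taking complex moduli componentwise, it simply applies the multiplicative quaternionic modulus to $a(re^{\ib\theta})=(se^{\ib\phi})a$ to obtain $r|a|=s|a|$ and hence $a=0$; your version is of course equally valid.
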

\begin{proof}
	\textit{Proof of (\ref{lem-basic-reverser-calculation-part-2}).}  If $ \theta \in \{0, \pi\}$ then the result holds trivially.  So we assume $  \theta \in (0, \pi)$.
	As $ a \in \mathcal{R}(e^{\ib \theta}) \, \cup \, \{0\}$ and $  \theta \in (0, \pi)$,  so from  \lemref{lem-ref- lAA- paper},  we have $a = z \jb$ for some $ z \in \C$.  Let  $b = u + v \jb$ be such that $a + b \, (e^{\ib \theta}) = (e^{-\ib \theta}) \,  b $, where $u,v \in \C$.   This implies 
	\begin{equation}\label{eq-proof-2-reverser}
		z\jb + (u + v \jb) \,  e^{\ib \theta} =  e^{-\ib \theta} \,  (u + v \jb).
	\end{equation}
	Using $\jb \bar{x} = x \jb$ for all $x \in \C$ and on comparing both sides of  Equation \eqref{eq-proof-2-reverser}, we  have:
	\begin{equation}\label{eq-part2-1-reverser}
		u  e^{\ib \theta}  =  e^{-\ib \theta} u, \hbox { i.e., } u  e^{\ib \theta}  =  u e^{-\ib \theta}, \hbox { and} 
	\end{equation}
	\begin{equation} \label{eq-part2-2-reverser}
		z +	v  e^{-\ib \theta}  =  e^{-\ib \theta} v,  \hbox { i.e., }  z + v  e^{-\ib \theta}  =  ve^{-\ib \theta}.
	\end{equation}
	The proof now follows from Equation \eqref{eq-part2-1-reverser} and Equation \eqref{eq-part2-2-reverser}.
	
	\textit{Proof of  (\ref{lem-basic-reverser-calculation-part-3}).} We have $ a (r e^{\ib \theta}) = (s e^{\ib \phi}) a$, where $a \in \H$.
	As 	$r,s >0$ and  $r \neq s$, so by taking the modulus on both sides of the above equation,  we get  $a =0$.   
\end{proof}

Now, we will investigate the simple elements of $\mathrm{SL}(3,\mathbb{H})$. Recall that simple elements of $\mathrm{SL}(3,\mathbb{H})$ are those elements which are conjugate to a real matrix. Note the following result from \cite{rodman}. 

\begin{lemma}  [cf.~{\cite  [Theorem 5.8.1 (b)]{rodman}}]\label{lemma-rod-simple}
	A square-size quaternion matrix $A$ is similar to a real matrix if and only if for every $m \in \N$ and every non-real eigenvalue $
	\lambda \in \H$, the number of Jordan blocks $\mathrm{J}(\alpha, m)$ with $
	\alpha \in \H$ similar to $\lambda$ in the Jordan form of $A$ is even.
\end{lemma}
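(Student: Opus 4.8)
The plan is to reduce the statement to a comparison of complex Jordan forms via the complex adjoint $\Phi$ of \defref{def-embedd-phi}. I would first record the two features of $\Phi$ that make this work. It is an algebra embedding, so conjugate matrices in $\mathrm{M}(n,\H)$ have conjugate images in $\mathrm{M}(2n,\C)$; and a \emph{real} matrix $B$ satisfies $\Phi(B)=\mathrm{diag}(B,B)$, which is conjugate over $\C$ to $B\oplus B$. Conversely, two quaternionic matrices are conjugate over $\H$ precisely when their complex adjoints are conjugate over $\C$: the one nontrivial implication rests on the identity $\overline{\Phi(A)}=J_0\,\Phi(A)\,J_0^{-1}$ with $J_0=\left(\begin{smallmatrix}0&\mathrm{I}_n\\-\mathrm{I}_n&0\end{smallmatrix}\right)$, which by a standard argument (see \cite{rodman,FZ}) lets one replace a complex intertwiner by one of the form $\Phi(S)$. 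Granting this, $A$ is similar to a real matrix if and only if $\Phi(A)$ is conjugate over $\C$ to $M\oplus M$ for some real matrix $M$.

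Next I would compute the complex Jordan structure of $\Phi(A)$ directly from the Jordan form of $A$ over $\H$. Applying $\Phi$ to a Jordan block $\mathrm{J}(\mu,m)$ with $\mu\in\R$ yields $\mathrm{J}(\mu,m)\oplus\mathrm{J}(\mu,m)$, whereas applying it to a block $\mathrm{J}(\lambda,m)$ with $\lambda$ non-real (taken with $\mathrm{Im}\,\lambda>0$, as in the excerpt) yields $\mathrm{J}(\lambda,m)\oplus\mathrm{J}(\overline\lambda,m)$. Hence, writing $N_{\lambda,m}$ for the number of size-$m$ Jordan blocks of $A$ attached to the similarity class of $\lambda$, the complex Jordan form of $\Phi(A)$ contains $2N_{\lambda,m}$ blocks of size $m$ at a real $\lambda$, and exactly $N_{\lambda,m}$ blocks of size $m$ at a non-real $\lambda$ (and $N_{\lambda,m}$ at $\overline\lambda$ as well).

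Then the two directions fall out. If $A=SBS^{-1}$ with $B$ real, then $\Phi(A)$ is conjugate over $\C$ to $B\oplus B$; since the complex Jordan form of a real matrix carries $\lambda$ and $\overline\lambda$ with identical block data, every block multiplicity of $B\oplus B$ is even, and comparison with the count above forces $N_{\lambda,m}$ to be even for every non-real $\lambda$ and every $m$. Conversely, if all these $N_{\lambda,m}$ are even, I would build a complex matrix $M$ having $N_{\mu,m}$ blocks of size $m$ at each real $\mu$, and $N_{\lambda,m}/2$ blocks of size $m$ at each of $\lambda$ and $\overline\lambda$ for non-real $\lambda$ with $\mathrm{Im}\,\lambda>0$. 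Its complex Jordan form is invariant under complex conjugation, so $M$ is conjugate over $\C$ to a real matrix $B$; then $\Phi(B)$ is conjugate over $\C$ to $B\oplus B$, whose complex Jordan form coincides with that of $\Phi(A)$ by the count above, so $\Phi(A)$ and $\Phi(B)$ are conjugate over $\C$, and therefore $A$ is conjugate over $\H$ to the real matrix $B$.

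The bookkeeping with block multiplicities is routine; the two points that genuinely need care --- and for which I would simply invoke the standard references --- are the equivalence ``$A$ is conjugate to $B$ over $\H$ iff $\Phi(A)$ is conjugate to $\Phi(B)$ over $\C$'', namely the step producing a quaternionic intertwiner from a complex one, and the classical fact that a complex matrix is similar to a real matrix exactly when its Jordan form is symmetric under complex conjugation. Everything else is the explicit action of $\Phi$ on Jordan blocks together with the uniqueness of the quaternionic Jordan form recalled in Section~\ref{sec-prel}.
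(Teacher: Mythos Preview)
Your argument is correct. Note, however, that the paper does not actually prove this lemma: it is stated with a ``cf.'' citation to \cite[Theorem 5.8.1(b)]{rodman} and used as a black box, so there is no proof in the paper to compare against. What you have supplied is a self-contained justification of the cited result via the complex adjoint $\Phi$, and the steps are sound: the computation $\Phi(\mathrm{J}(\lambda,m))=\mathrm{J}(\lambda,m)\oplus\mathrm{J}(\overline{\lambda},m)$ for complex $\lambda$ (and its real specialization) is immediate from \defref{def-embedd-phi}, and the two facts you flag as needing outside input---that conjugacy over $\H$ is equivalent to conjugacy of complex adjoints over $\C$, and that a complex matrix is similar to a real one exactly when its Jordan data are conjugation-symmetric---are indeed the standard ingredients from \cite{rodman,FZ}. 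The multiplicity bookkeeping in both directions is accurate.
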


The following result follows from \lemref{lemma-rod-simple}, which classify the simple elements of $\mathrm{SL}(3,\mathbb{H})$.

\begin{proposition} \label{prop-simple-classification}
	An element $A \in \mathrm{SL}(3,\mathbb{H})$ is conjugate to an element $B \in  \mathrm{SL}(3,\mathbb{R})$ if and only if its eigenvalues are either all real or it is diagonalizable with only one non-real eigenvalue of multiplicity two.
\end{proposition}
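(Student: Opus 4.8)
The plan is to apply \lemref{lemma-rod-simple} directly, working through the possible Jordan forms in $\mathrm{SL}(3,\H)$ that were listed explicitly in the preliminaries (cases (i)--(iii) following \defref{def:sdet}). Since $A$ has size $3$, and since \lemref{lemma-rod-simple} requires the number of Jordan blocks attached to any non-real eigenvalue similarity class to be even, the constraint is severe: a non-real similarity class can contribute either $0$ or $2$ to the total, and $2$ blocks of size $1$ sum to $2$, leaving room for exactly one more block of size $1$ attached to a real eigenvalue. No non-real Jordan block of size $2$ or $3$ can occur, because then we would need a second block of the same size attached to the same class, forcing total size $\geq 4 > 3$.

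First I would establish the ``if'' direction. If all eigenvalues of $A$ are real, then there are no non-real eigenvalue classes at all, so the parity condition of \lemref{lemma-rod-simple} is vacuously satisfied, and $A$ is similar to a real matrix; one then uses \remref{rem-conjugacy-SL(n,H)} to upgrade the conjugacy to one within $\mathrm{SL}(3,\H)$, and observes that the real matrix can be taken in $\mathrm{SL}(3,\R)$ since $\mathrm{det}_{\H}$ restricted to real matrices agrees (up to the known sign/positivity normalization) with the usual determinant. If instead $A$ is diagonalizable with exactly one non-real eigenvalue $\lambda$ of multiplicity two, then the Jordan form of $A$ has exactly two Jordan blocks $\mathrm{J}(\lambda,1)$ in the class of $\lambda$ (an even number) and one further block $\mathrm{J}(\mu,1)$ with $\mu$ real; again the hypothesis of \lemref{lemma-rod-simple} holds, so $A$ is similar to a real matrix, and the same normalization argument places it in $\mathrm{SL}(3,\R)$.

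For the ``only if'' direction I would argue by contraposition, enumerating the Jordan forms (i)--(iii). Suppose $A$ has a non-real eigenvalue but is not of the stated form. Either (a) $A$ has a non-real eigenvalue appearing in a Jordan block of size $\geq 2$ --- but then \lemref{lemma-rod-simple} would force a second block of that size in the same class, impossible in dimension $3$ --- or (b) all non-real eigenvalues occur only in size-$1$ blocks, but some non-real class appears with odd multiplicity. Since the total dimension is $3$, a non-real class of multiplicity $1$ is immediately ruled out (odd, so not similar to a real matrix), and a non-real class of multiplicity $3$ is likewise ruled out; the only surviving possibility with a non-real eigenvalue is multiplicity exactly $2$ for a single non-real class with the third eigenvalue real, which is exactly the excluded ``diagonalizable'' case. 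Hence if $A$ is simple and has a non-real eigenvalue, it must be diagonalizable with one non-real eigenvalue of multiplicity two.

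The main obstacle is not conceptual but bookkeeping: one must be careful about the convention that eigenvalues are recorded via their unique complex representative with non-negative imaginary part, so that ``non-real eigenvalue of multiplicity two'' is interpreted as a single similarity class carrying two Jordan blocks, and one must check that the passage from ``similar to a real matrix'' to ``conjugate to an element of $\mathrm{SL}(3,\R)$'' is legitimate --- this is handled by \remref{rem-conjugacy-SL(n,H)} together with the positivity of $\mathrm{det}_{\H}$ on $\mathrm{GL}(3,\H)$ and the identification of $\mathrm{det}_{\H}$ with the ordinary determinant on real matrices.
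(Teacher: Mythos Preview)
Your approach is correct and is exactly the one the paper takes: the paper's entire proof is the sentence ``the following result follows from \lemref{lemma-rod-simple},'' and you have supplied the straightforward parity-counting case analysis that makes this derivation explicit for the three possible Jordan types in dimension $3$. Your closing remarks about upgrading the conjugacy into $\mathrm{SL}(3,\H)$ via \remref{rem-conjugacy-SL(n,H)} and about the determinant normalization already go a step beyond what the paper spells out.
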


Note the following immediate Corollary of  \propref{prop-simple-classification}.

\begin{corollary} \label{cor-simple-uppertriangular-real-eigenvalue}
	Let $A \in \mathrm{SL}(3,\mathbb{H})$ be an upper triangular matrix with all the diagonal entries being real numbers. Then $A$ is a simple matrix.
\end{corollary}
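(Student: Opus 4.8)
The plan is to deduce the statement directly from \propref{prop-simple-classification}: the first alternative there --- that the eigenvalues of $A$ are all real --- carries no condition on diagonalizability, so it suffices to show that every eigenvalue of the upper triangular matrix $A$ is a real number; the conclusion that $A$ is conjugate to an element of $\mathrm{SL}(3,\R)$, i.e.\ that $A$ is simple (\defref{def-simple-element}), is then immediate.

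To establish that the eigenvalues of $A$ are real, I would pass to the complex adjoint, since by definition the eigenvalues of $A$ are the roots of $\chi_{\H}(A)=\chi_{\Phi(A)}$ with non-negative imaginary part, and it is enough to check that all roots of $\chi_{\Phi(A)}$ are real. Writing $A=A_1+A_2\jb$ with $A_1,A_2\in\mathrm{M}(3,\C)$, the hypotheses force $A_1$ to be upper triangular with the same (real) diagonal $(d_1,d_2,d_3)$ as $A$, while $A_2$ is \emph{strictly} upper triangular, because the diagonal entries of $A_2$ are the $\jb$-parts of the (real) diagonal entries of $A$ and hence vanish. Now $\Phi(A)=\bigl(\begin{smallmatrix}A_1 & A_2\\ -\overline{A_2} & \overline{A_1}\end{smallmatrix}\bigr)$ (\defref{def-embedd-phi}), and I would conjugate it by the permutation matrix of $\C^{6}$ that reorders the standard basis $(e_1,e_2,e_3,e_4,e_5,e_6)$ as $(e_1,e_4,e_2,e_5,e_3,e_6)$, interleaving the two blocks. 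Since $A_1,\overline{A_1}$ are upper triangular and $A_2,\overline{A_2}$ are strictly upper triangular, a short index check shows that the conjugated matrix is upper triangular with diagonal $(d_1,d_1,d_2,d_2,d_3,d_3)$; hence $\chi_{\Phi(A)}(x)=(x-d_1)^2(x-d_2)^2(x-d_3)^2$, all of whose roots are real, and \propref{prop-simple-classification} finishes the argument. (Alternatively, one may simply invoke the standard fact that the eigenvalues of a triangular quaternionic matrix are the similarity classes of its diagonal entries.)

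I do not anticipate a genuine obstacle; the only point requiring care is the index check in the penultimate step, which is routine bookkeeping. It is worth emphasizing that this is exactly where the hypothesis ``real diagonal entries'' enters: it is what guarantees that the off-diagonal block $A_2$ is strictly upper triangular, and without that one cannot simultaneously triangularize all four blocks of $\Phi(A)$ by a single permutation --- and then the conclusion genuinely fails, e.g.\ $\mathrm{diag}(\ib,1,1)$ is not simple.
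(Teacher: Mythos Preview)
Your proposal is correct and follows the paper's approach: the paper simply records this as an ``immediate Corollary of \propref{prop-simple-classification}'' without further argument, and you are supplying the one detail it suppresses, namely that the eigenvalues of an upper-triangular quaternionic matrix with real diagonal are precisely those (real) diagonal entries. Your verification via the permuted complex adjoint is sound --- the key observation that $A_2$ is \emph{strictly} upper triangular is exactly what makes the interleaved $6\times 6$ matrix block-upper-triangular with diagonal $2\times2$ blocks $\mathrm{diag}(d_i,d_i)$ --- and this is essentially an explicit unpacking of the ``standard fact'' you allude to at the end.
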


\begin{remark}	\label{rem-simple-complex-eigenvalues}
	In view of \propref{prop-simple-classification},  if $A \in \mathrm{SL}(3,\mathbb{H})$ is simple and has a non-real eigenvalue, then up to conjugacy,  $A$ is conjugate to  $B=\mathrm{diag}(re^{\ib \theta}, re^{\ib \theta}, r^{-2}e^{\ib \psi})$, where $r \in \mathbb{R}^{+}$, $\theta \in (0,\pi)$ and $\psi \in \{0,\pi\}$. To see that matrix $B$ is conjugate to a real matrix, note the following equations:
	\begin{align*}
		&	\begin{pmatrix}
			1 & 0 \\ 0 & \jb 
		\end{pmatrix} \begin{pmatrix}
			r e^{\ib \theta} & 0 \\0& r e^{\ib \theta}
		\end{pmatrix}  =  \begin{pmatrix}
			r e^{\ib \theta} & 0 \\0& r e^{-\ib \theta}
		\end{pmatrix}	\begin{pmatrix}
		1 & 0 \\ 0 & \jb 
	\end{pmatrix}, \hbox{ and } \\
		&	\begin{pmatrix}
			1 & 1 \\ \ib & -\ib 
		\end{pmatrix} \begin{pmatrix}
			r e^{\ib \theta} & 0 \\0& r e^{-\ib \theta}
		\end{pmatrix}=  \begin{pmatrix}
			r \cos \theta & r \sin \theta \\- r \sin \theta & r \cos \theta
		\end{pmatrix} \begin{pmatrix}
			1 & 1 \\ \ib & -\ib 
		\end{pmatrix}.
	\end{align*}
\end{remark}
	
\section{Reversibility in $\mathrm{SL}(3,\mathbb{H})$}\label{sec-rev}
In this section, we will investigate the reversibility problem in the group $\mathrm{SL}(3,\mathbb{H})$. Before that,  it is worth noting that if $A \in \mathrm{GL}(3,\H)$ is reversible, then  $A$ is an element of $ \mathrm{SL}(3,\H)$.  Moreover, the next two results establish that it is enough to restrict our attention to 
$\mathrm{SL}(3,\mathbb{H})$ to investigate reversibility.

\begin{lemma}\label{lem-rev-SL(3,H)}
		An element $A$ of $\mathrm{SL}(3,\H)$  is reversible in $\mathrm{SL}(3,\H)$  if and only if it is reversible in $\mathrm{GL}(3,\H)$.
\end{lemma}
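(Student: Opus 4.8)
The plan is to prove the two directions separately, with the forward direction (reversible in $\mathrm{SL}(3,\H)$ implies reversible in $\mathrm{GL}(3,\H)$) being immediate, since $\mathrm{SL}(3,\H)$ is a subgroup of $\mathrm{GL}(3,\H)$ and conjugation in the smaller group is conjugation in the larger one. So the substance is the converse: if $A \in \mathrm{SL}(3,\H)$ and there exists $h \in \mathrm{GL}(3,\H)$ with $h A h^{-1} = A^{-1}$, we must produce a reverser lying inside $\mathrm{SL}(3,\H)$.

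The key step is the scaling trick already recorded in \remref{rem-conjugacy-SL(n,H)}: by \cite[Proposition 4.2]{FZ}, $\det_{\H}(h) \in \R$ and $\det_{\H}(h) > 0$, so we may set $a := (\det_{\H}(h))^{-1/3} \in \R^{+}$, and then $a\mathrm{I}_3$ is a central element of $\mathrm{GL}(3,\H)$. Put $h' := h(a\mathrm{I}_3) = (a\mathrm{I}_3)h$. Since scalars $a\mathrm{I}_3$ are central, they cancel in the conjugation: $h' A (h')^{-1} = (a\mathrm{I}_3) h A h^{-1} (a^{-1}\mathrm{I}_3) = h A h^{-1} = A^{-1}$. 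Moreover $\det_{\H}(h') = \det_{\H}(h) \cdot \det_{\H}(a\mathrm{I}_3)$; here one uses multiplicativity of $\det_{\H}$ (i.e.\ of $\det \circ \Phi$) together with $\det_{\H}(a\mathrm{I}_3) = \det(\Phi(a\mathrm{I}_3)) = \det(a\mathrm{I}_6) = a^6 = (\det_{\H}(h))^{-2}$. Hence $\det_{\H}(h') = \det_{\H}(h) \cdot (\det_{\H}(h))^{-2} = (\det_{\H}(h))^{-1}$. This is not quite $1$, so a direct application of the remark's normalization needs a small adjustment.

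I would fix this by choosing the scalar more carefully. Since $\det_{\H}(h) > 0$ is a positive real, pick $a \in \R^{+}$ with $a^6 = (\det_{\H}(h))^{-1}$, i.e.\ $a = (\det_{\H}(h))^{-1/6}$, and again set $h' := h(a\mathrm{I}_3)$. Then $h'$ still conjugates $A$ to $A^{-1}$ (central scalars cancel as above), and $\det_{\H}(h') = \det_{\H}(h)\cdot a^6 = \det_{\H}(h) \cdot (\det_{\H}(h))^{-1} = 1$, so $h' \in \mathrm{SL}(3,\H)$ and $A$ is reversible in $\mathrm{SL}(3,\H)$. This completes the proof.

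The only place requiring any care — the ``main obstacle,'' though it is minor — is confirming that $\det_{\H}$ is multiplicative and takes positive real values on $\mathrm{GL}(3,\H)$, so that the sixth root $a$ exists in $\R^{+}$ and the determinant of $h'$ computes correctly; these facts are exactly \cite[Proposition 4.2]{FZ} and \cite[Theorem 5.9.2(6)]{rodman}, already cited in the preliminaries, so there is no real difficulty. One should also note in passing that reversibility forces $\det_{\H}(A) = 1$: from $h A h^{-1} = A^{-1}$ we get $\det_{\H}(A) = \det_{\H}(A^{-1}) = \det_{\H}(A)^{-1}$, hence $\det_{\H}(A)^2 = 1$, and since $\det_{\H}(A) \ge 0$ we conclude $\det_{\H}(A) = 1$; this is the remark made just before the lemma that a reversible element of $\mathrm{GL}(3,\H)$ automatically lies in $\mathrm{SL}(3,\H)$, and it is consistent with the hypothesis $A \in \mathrm{SL}(3,\H)$.
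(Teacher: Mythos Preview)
Your proof is correct and follows the same approach as the paper, which simply invokes \remref{rem-conjugacy-SL(n,H)} to rescale a reverser $g \in \mathrm{GL}(3,\H)$ into $\mathrm{SL}(3,\H)$. You have in fact been more careful than the paper: the exponent $a = (\det_{\H}(g))^{-1/3}$ stated in \remref{rem-conjugacy-SL(n,H)} gives $\det_{\H}(a\mathrm{I}_3) = a^6 = (\det_{\H}(g))^{-2}$ and hence $\det_{\H}(h) = (\det_{\H}(g))^{-1}$, not $1$; your choice $a = (\det_{\H}(g))^{-1/6}$ is the correct normalization.
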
 
	
\begin{proof}
To see this, suppose that $gAg^{-1}=A^{-1}$, where $g \in \mathrm{GL}(3,\H)$. Either $g \in \mathrm{SL}(3,\H)$ or $g \not\in \mathrm{SL}(3,\H)$. In the latter case, using  \remref{rem-conjugacy-SL(n,H)}, we can construct $h \in \mathrm{SL}(3,\H)$ such that $h  Ah^{-1}=A^{-1}$.  Hence, the proof follows.
\end{proof}
	
The classification of strongly reversible elements in $ \mathrm{SL}(3,\H)$   differs from that in $ \mathrm{SL}(3,\C)$.   There are reversible elements in $\mathrm{SL}(3,\mathbb{C})$ which are strongly  reversible in  $\mathrm{GL}(3,\mathbb{C})$ but not in $\mathrm{SL}(3,\mathbb{C})$.  However, for $A \in \mathrm{SL}(3,\mathbb{H})$, strong reversibility in $\mathrm{SL}(3,\mathbb{H})$ and $\mathrm{GL}(3,\mathbb{H})$ is equivalent.

\begin{lemma}
Let $A \in \mathrm{SL}(3,\H)$.  Then $A$ is strongly reversible in $ \mathrm{SL}(3,\H)$  if and only if it is strongly reversible in $ \mathrm{GL}(3,\H)$.
\end{lemma}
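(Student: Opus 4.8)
The plan is to reduce both implications to elementary properties of the quaternionic determinant, so that the lemma becomes essentially a one-line observation. One direction is free: since $\mathrm{SL}(3,\H)$ is a subgroup of $\mathrm{GL}(3,\H)$, an involution in $\mathrm{SL}(3,\H)$ that conjugates $A$ to $A^{-1}$ is, a fortiori, such an involution in $\mathrm{GL}(3,\H)$, so strong reversibility in $\mathrm{SL}(3,\H)$ immediately implies strong reversibility in $\mathrm{GL}(3,\H)$.

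For the converse I would start from an element $h \in \mathrm{GL}(3,\H)$ of order at most $2$ with $hAh^{-1} = A^{-1}$, and show that $h$ in fact already lies in $\mathrm{SL}(3,\H)$, so that no normalization is needed. If $h = \mathrm{I}_3$ this is trivial, so assume $h^2 = \mathrm{I}_3$. Since $\det_{\H}$ is multiplicative (it is defined through the ring embedding $\Phi : \mathrm{M}(3,\H) \to \mathrm{M}(6,\C)$ followed by the ordinary determinant, both of which are multiplicative), we get $\det_{\H}(h)^2 = \det_{\H}(h^2) = \det_{\H}(\mathrm{I}_3) = 1$. Combining this with the fact recalled in Section~\ref{sec-prel} that $\det_{\H}$ takes non-negative real values on all of $\mathrm{M}(3,\H)$, we conclude $\det_{\H}(h) = 1$, i.e.\ $h \in \mathrm{SL}(3,\H)$. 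Thus $h$ itself is the desired involution inside $\mathrm{SL}(3,\H)$ and $A$ is strongly reversible there.

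I do not expect a genuine obstacle. The only substantive point is to notice that, in contrast with $\mathrm{SL}(3,\C)$ — where an involution reversing $A$ can have determinant $-1$ and normalizing it into $\mathrm{SL}(3,\C)$ requires a scalar factor whose cube is $-1$, which may destroy the order-$2$ property, cf.\ the discussion preceding the lemma — over $\H$ the non-negativity of $\det_{\H}$ forces the determinant of any involution of $\mathrm{GL}(3,\H)$ to equal $1$ outright, so the subtlety present in the complex case simply does not arise. The one bookkeeping item is to include the degenerate case $h = \mathrm{I}_3$ (equivalently $A^2 = \mathrm{I}_3$) under ``order at most $2$'', which is handled by the trivial remark that $\mathrm{I}_3 \in \mathrm{SL}(3,\H)$.
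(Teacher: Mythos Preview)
Your proposal is correct and follows essentially the same argument as the paper: both observe that an involution $g\in\mathrm{GL}(3,\H)$ must satisfy $\det_{\H}(g)^2=1$, and then invoke the non-negativity of $\det_{\H}$ to force $\det_{\H}(g)=1$, so $g$ already lies in $\mathrm{SL}(3,\H)$. Your write-up is slightly more explicit about the trivial direction and the degenerate case $h=\mathrm{I}_3$, but the substance is identical.
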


\begin{proof}
To see this,  suppose that $gAg^{-1}=A^{-1}$,  where $g \in \mathrm{GL}(3,\H)$ is an involution.  Note that $\Phi(g) \in \mathrm{GL}(6,\C)$ is an involution,  so ${\rm det(\Phi(g))} = \pm 1$.  Since $\mathrm{det}_{\H}(g)$ is always non-negative, we have $ {\rm det_{\H}(g)}:=  {\rm det(\Phi(g))} = 1$, i.e.,  $g \in \mathrm{SL}(3,\H)$.  
\end{proof}
	
The following theorem classifies reversible elements in $\mathrm{SL}(3,\mathbb{H})$.
\begin{theorem}\label{thm-rev-GL(3,H)}
An element $A \in \mathrm{SL}(3,\mathbb{H})$ is reversible in $ \mathrm{SL}(3,\H)$  if and only if it is conjugate in $\mathrm{SL}(3,\mathbb{H})$ to one of the following matrices:
\begin{enumerate}[(i)]
			\item \label{rev-type-1} 
			$\begin{pmatrix}
			e^{\ib \theta} &0 &0 \\
			0 &  e^{\ib \phi}& 0\\
			0 & 0 & e^{\ib \psi}
			\end{pmatrix}$,  where $\theta, \phi, \psi \in [0,\pi]$,
			
			\vspace{3mm}
			\item \label{rev-type-2}
			$\begin{pmatrix}
			re^{\ib \theta} &0 &0 \\
			0 &  r^{-1}e^{\ib \theta}& 0\\
			0 & 0 & e^{\ib \psi}
			\end{pmatrix}$,  where $r \in \R^{+}, r \neq 1$,  and $\theta, \psi \in [0,\pi]$,
			\vspace{3mm}
			\item \label{rev-type-3}
			$\begin{pmatrix}
			e^{\ib \theta} &1 &0 \\
			0 &  e^{\ib \theta} & 0\\
			0 & 0 & e^{\ib \psi}
			\end{pmatrix}$,  where $\theta, \psi \in [0,\pi]$,
			
			\vspace{3mm}
			
			\item  \label{rev-type-4}
			$\begin{pmatrix}
			e^{\ib \theta} &1 &0 \\
			0 &  e^{\ib \theta} & 1\\
			0 & 0 & e^{\ib \theta}
			\end{pmatrix}$,  where $\theta \in [0,\pi]$.
\end{enumerate}
\end{theorem}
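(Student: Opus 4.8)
\textbf{Proof proposal for Theorem \ref{thm-rev-GL(3,H)}.}

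The plan is to prove this by working with the Jordan form classification of $\mathrm{GL}(3,\H)$ recalled earlier, combined with the preliminary reverser calculations of \lemref{lem-ref- lAA- paper} and \lemref{lem-basic-reverser-calculation}. Since reversibility is a conjugacy-invariant notion and, by \remref{rem-conjugacy-SL(n,H)} and \lemref{lem-rev-SL(3,H)}, we may freely pass between $\mathrm{GL}(3,\H)$ and $\mathrm{SL}(3,\H)$, it suffices to test reversibility on each of the three Jordan normal forms (i)--(iii) listed after \defref{def-embedd-phi}. For a general Jordan form with diagonal blocks involving $re^{\ib\theta}, se^{\ib\phi}, te^{\ib\psi}$, the condition that $A$ be conjugate to $A^{-1}$ forces, at the level of the moduli, that the multiset $\{r,s,t\}$ coincide with $\{r^{-1},s^{-1},t^{-1}\}$; since the $3$-fold case $r=s=t$ with $\theta=\phi=\psi$ (a central-type Jordan block) and the product of moduli is constrained, the only possibilities are that all moduli are $1$, or exactly one pair is $\{r,r^{-1}\}$ with $r\neq 1$ and the remaining modulus is $1$. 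This already separates the list into the ``elliptic'' cases (i), (iii), (iv) where all moduli are $1$, and the loxodromic case (ii).

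The key steps, in order, are as follows. First I would dispose of the \emph{necessity} direction: if $A$ is reversible then its eigenvalue multiset (complex representatives with non-negative imaginary part) must be invariant under $\lambda\mapsto\bar\lambda^{-1}$, and its Jordan block structure must match that of $A^{-1}$. Running through the three Jordan templates and using \lemref{lem-basic-reverser-calculation}(\ref{lem-basic-reverser-calculation-part-3}) to control the moduli, one finds the eigenvalue data must be exactly as in (i)--(iv); in particular, in the non-semisimple cases a non-unit modulus would produce an eigenvalue $re^{\ib\theta}$ with $r\neq1$ repeated in a Jordan block, whose inverse Jordan block has eigenvalue $r^{-1}e^{-\ib\theta}$ with different modulus, contradicting block-structure invariance — so the repeated eigenvalue in (iii), (iv) must have modulus $1$. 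Second, for the \emph{sufficiency} direction I would exhibit an explicit reverser (indeed an involution) for each of the four normal forms. For (ii), conjugation by the permutation-type involution swapping the first two coordinates sends $\mathrm{diag}(re^{\ib\theta}, r^{-1}e^{\ib\theta}, e^{\ib\psi})$ to $\mathrm{diag}(r^{-1}e^{\ib\theta}, re^{\ib\theta}, e^{\ib\psi})$, which is conjugate (again by a coordinate swap, and then using $\jb e^{\ib\theta}\jb^{-1}=e^{-\ib\theta}$ to handle the sign of the argument) to the inverse $\mathrm{diag}(r^{-1}e^{-\ib\theta}, re^{-\ib\theta}, e^{-\ib\psi})$. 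For (i), (iii), (iv), one uses block-diagonal and block-anti-diagonal involutions built from $\jb$ (and, for the Jordan-block cases, an additional conjugation that reverses the ordering of a Jordan chain) to realize $\lambda\mapsto\bar\lambda^{-1}=e^{-\ib\theta}$ on each elliptic block, exactly the kind of manipulation appearing in \remref{rem-simple-complex-eigenvalues}; one checks these conjugating matrices have non-negative quaternionic determinant, hence lie in $\mathrm{SL}(3,\H)$ after the scaling of \remref{rem-conjugacy-SL(n,H)}.

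I expect the main obstacle to be the \emph{necessity} direction in the non-semisimple (Jordan block) cases, specifically ruling out reversible elements whose Jordan form is of type (ii) or (iii) above but with a genuinely non-real \emph{and} non-unit-modulus eigenvalue attached to a nontrivial Jordan block, or with mismatched block sizes. The subtlety is that for quaternionic matrices the ``trace'' and ``determinant'' arguments that work over $\C$ must be replaced by arguments about the $6\times6$ complex adjoint $\Phi(A)$ and its Jordan structure, together with the fact that similarity classes of quaternionic eigenvalues are parametrized by complex representatives up to conjugation. The cleanest route is probably: pass to $\Phi(A)\in\mathrm{SL}(6,\C)$, note $\Phi(A^{-1})=\Phi(A)^{-1}$ and that $\Phi$ intertwines conjugation, observe that $\Phi(A)$ reversible in $\mathrm{GL}(6,\C)$ forces its eigenvalue multiset to be $\{\lambda,\lambda^{-1}\}$-invariant with matching Jordan blocks, then translate back. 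Once the eigenvalue-and-block data is pinned down, matching it against the Jordan classification of $\mathrm{GL}(3,\H)$ leaves precisely the four families (i)--(iv), and the explicit involutions from the sufficiency step close the argument.
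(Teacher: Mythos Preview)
Your overall strategy matches the paper's: establish necessity by comparing the Jordan data of $A$ and $A^{-1}$, and establish sufficiency by exhibiting an explicit reverser for each of the four normal forms. Two points need correction.

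First, your parenthetical claim that the reversers can be taken to be \emph{involutions} is false in general, and the error is not cosmetic. For example, in case (i) with $\theta,\phi,\psi$ pairwise distinct and all in $(0,\pi)$, the matrix is reversible but \emph{not} strongly reversible (this is exactly what \lemref{lem-non-strong-rev-GL(3,H)} establishes later). The paper's reversers are instead \emph{skew}-involutions ($g^2=-\mathrm{I}_3$): for (i) one takes $g=\jb\,\mathrm{I}_3$; for (ii) the anti-diagonal $\jb$-matrix you describe; for (iii) and (iv) upper-triangular matrices whose diagonal entries are $(\pm e^{-2k\ib\theta})\jb$. Your $\jb$-based constructions will produce skew-involutions, not involutions, and this distinction is load-bearing downstream (\propref{prop-rev-prod-skew-inv} and \thmref{thm-classification-str-rev-SL}). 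Drop the involution claim; the reversers you build are still perfectly good for \emph{this} theorem.

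Second, your necessity argument via the complex adjoint $\Phi(A)\in\mathrm{SL}(6,\C)$ works but is a detour. The paper argues directly from the quaternionic Jordan form: $A$ is conjugate to $A^{-1}$ iff they have the same Jordan form, and for a complex representative $\lambda$ one has $[\lambda]=[\lambda^{-1}]$ iff $|\lambda|=1$ (since then $\lambda^{-1}=\bar\lambda$). Hence the Jordan blocks of a reversible $A$ partition into pairs $\{\mathrm{J}(\alpha,s),\mathrm{J}(\alpha^{-1},s)\}$ with $|\alpha|\neq1$ and singletons $\mathrm{J}(\beta,t)$ with $|\beta|=1$; with only three blocks total this yields exactly (i)--(iv). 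No passage to $\Phi$, and none of the trace/determinant subtleties you anticipate, are needed.
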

\begin{proof} We can assume, without loss of generality, that $A$ is equal to one of the matrices given in $(\ref{rev-type-1})-(\ref{rev-type-4})$. To show that $A$ is reversible in $\mathrm{SL}(3,\mathbb{H})$, it is sufficient to find an element $g$ in $\mathrm{SL}(3,\mathbb{H})$ such that $gAg^{-1}=A^{-1}$. Note that for the matrices given in $(\ref{rev-type-1})-(\ref{rev-type-4})$, we can choose a desired $g\in\mathrm{SL}(3,\mathbb{H})$ in the following way, respectively:
\begin{enumerate}[(a)]
			\item\label{rev-type-1-skew}  $ g:= \begin{pmatrix}
			\jb & 0 &0 \\
			0 &  \jb & 0\\
			0 & 0 & \jb 
			\end{pmatrix}, $
		
			\vspace{3mm}
			
			\item \label{rev-type-2-skew}  $g:= \begin{pmatrix}
			0 & \jb &0 \\
			\jb &  0 & 0\\
			0 & 0 & \jb 
			\end{pmatrix},$
		
			\vspace{3mm}
			
			\item \label{rev-type-3-skew} $g:=    \begin{pmatrix}
			- (e^{-2\ib \theta})\,  \jb&0 &0 \\
			0 &  \jb & 0\\
			0 & 0 & \jb \\
			\end{pmatrix},$
		
			\vspace{3mm}
			
			\item \label{rev-type-4-skew} $g:=  \begin{pmatrix}
			(e^{-4\ib \theta})\,  \jb & (e^{-3\ib \theta})\,  \jb &0 \\
			0 &  - (e^{-2\ib \theta})\,  \jb  & 0\\
			0 & 0 & \jb \\
			\end{pmatrix}.$ 
\end{enumerate}
		
Conversely,  let $g A g^{-1} = A ^{-1}$  for some $g \in \mathrm{SL}(3,\mathbb{H})$. In view of the Jordan decomposition over $\H$, $A$ is conjugate to $A^{-1}$ if and only if $A$ and $A^{-1} $ have same Jordan form.  Now, recall the fact that for a unique complex representative $\lambda$ of an eigenvalue class of  $A$,  $[\lambda]= [\lambda^{-1}]$ if and only if $|\lambda|= 1$, i.e.,  $\lambda^{-1} = \overline{\lambda}$.  This implies  that the  blocks  in the Jordan form of $A$ can be partitioned into pairs $ \{ \mathrm{J}(\alpha, s),\mathrm{J}(\alpha^{-1}, s)\} $, or singletons $\{\mathrm{J}(\beta, t  )\}$,  where $\alpha, \beta \in \C \setminus \{0\}$  with non-negative imaginary part such that  $|\alpha| \neq 1,  |\beta| = 1$.  Hence, $A$ will be conjugate to one of the matrices in $(\ref{rev-type-1})-(\ref{rev-type-4})$. This completes the proof.
\end{proof}

Observe that the characteristic polynomial $\chi_{\H}(A)$ is \textit{self-dual} for each reversible element $A \in \mathrm{SL(3,\H)}$;  see \defref{def:sdet}.   
From this, the proof of  \corref{cor-rev-selfdual-char} follows.

\textbf{Proof of  \corref{cor-rev-selfdual-char}.} Consider the embedding $\Phi$ from $\mathrm{SL(3,\H)}$  to $\mathrm{SL(6,\C)}$ which is defined in \defref{def-embedd-phi}. The proof now follows from \thmref{thm-rev-GL(3,H)}.
\qed

\begin{remark} \label{remark_non-rev-SL(3,H)}
	Let $g$ be a non-reversible element of $\mathrm{SL(3,\H)}$. Then  up to conjugacy, $g$  has one of the following forms:
	\begin{enumerate}
		\item $ \begin{pmatrix}
			r e^{\ib \theta} &0 &0 \\
			0 & s e^{\ib \phi}& 0\\
			0 & 0 & \frac{1}{rs}e^{\ib \psi}
		\end{pmatrix} $, where $r,s \in \mathbb{R}^{+} \setminus \{1\}$  and $\theta, \phi, \psi \in [0,\pi]$ such that either $rs \neq 1$ or $\theta \neq \phi$,
	
		\vspace{3mm}
		
		\item $\begin{pmatrix}
			r e^{\ib \theta} &1 &0 \\
			0 & r e^{\ib \theta}& 0\\
			0 & 0 &  \frac{1}{r^2} e^{\ib \psi}
		\end{pmatrix}$, where $r \in \mathbb{R}^{+}$, $r \neq 1$ and $\theta,\psi \in [0,\pi]$.
	\end{enumerate}
\end{remark}

An element $g \in \mathrm{SL(3,\H)}$ is called a \textit{skew-involution} if $g^2 = - \mathrm{I}_3$.   Note that the conjugating elements given by $\eqref{rev-type-1-skew} -\eqref{rev-type-4-skew}$ in the proof of \thmref{thm-rev-GL(3,H)},  are skew-involutions. Therefore, we have the following result.

\begin{proposition}\label{prop-rev-prod-skew-inv}
	Let $A \in  \mathrm{SL}(3,\H)$ be a reversible element. Then $A$ can be written as a product of two skew-involutions in $\mathrm{SL}(3,\H)$.
\end{proposition}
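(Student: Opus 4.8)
The plan is to prove \propref{prop-rev-prod-skew-inv} by reducing, via conjugacy, to the four normal forms of \thmref{thm-rev-GL(3,H)} and exhibiting two skew-involutions whose product is the given matrix. First I would record the elementary observations that (a) being a product of two skew-involutions is a conjugacy-invariant property: if $A = s_1 s_2$ with $s_i^2 = -\mathrm{I}_3$ and $h \in \mathrm{SL}(3,\H)$, then $hAh^{-1} = (hs_1h^{-1})(hs_2h^{-1})$ and $(hs_ih^{-1})^2 = -\mathrm{I}_3$; and (b) if $g$ is any skew-involution with $gAg^{-1} = A^{-1}$, then $A = (Ag)\cdot g^{-1}$ and one checks directly that both $Ag$ and $g^{-1}$ are skew-involutions — indeed $(g^{-1})^2 = (g^2)^{-1} = -\mathrm{I}_3$, and $(Ag)^2 = AgAg = A(gAg^{-1})g^2 = A\cdot A^{-1}\cdot(-\mathrm{I}_3) = -\mathrm{I}_3$. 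So the whole statement follows once we know that every reversible $A$ is reversed by \emph{some} skew-involution in $\mathrm{SL}(3,\H)$.

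Next, by \thmref{thm-rev-GL(3,H)} and observation (a), it suffices to treat $A$ equal to one of the four normal forms (i)--(iv) of that theorem. But the proof of \thmref{thm-rev-GL(3,H)} already supplies, in cases $\eqref{rev-type-1-skew}$--$\eqref{rev-type-4-skew}$, an explicit conjugating element $g \in \mathrm{SL}(3,\H)$ with $gAg^{-1} = A^{-1}$ in each case. So the remaining task is purely computational: verify that each of those four matrices $g$ satisfies $g^2 = -\mathrm{I}_3$. For $\eqref{rev-type-1-skew}$ this is immediate since $\jb^2 = -1$ entrywise on the diagonal; for $\eqref{rev-type-2-skew}$ the block $\left(\begin{smallmatrix} 0 & \jb \\ \jb & 0\end{smallmatrix}\right)$ squares to $\left(\begin{smallmatrix} \jb^2 & 0 \\ 0 & \jb^2 \end{smallmatrix}\right) = -\mathrm{I}_2$ together with the $\jb$ in the $(3,3)$ slot; for $\eqref{rev-type-3-skew}$ one uses that $(-e^{-2\ib\theta}\jb)^2 = e^{-2\ib\theta}\jb\, e^{-2\ib\theta}\jb = e^{-2\ib\theta} e^{2\ib\theta} \jb^2 = -1$, invoking $\jb z = \bar z \jb$ for $z \in \C$; and for $\eqref{rev-type-4-skew}$ one expands the $2\times 2$ upper-left block and checks the diagonal entries give $-1$ (again via $\jb z = \bar z\jb$) and the off-diagonal entry cancels, namely $e^{-4\ib\theta}\jb\, e^{-4\ib\theta}\jb = -\mathrm{I}$-type identities plus $e^{-4\ib\theta}\jb\cdot(-e^{-3\ib\theta}\jb) + e^{-3\ib\theta}\jb\cdot(-e^{-2\ib\theta}\jb) = 0$.

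I would organize the write-up as: state the reduction to normal forms (one sentence, citing \thmref{thm-rev-GL(3,H)}), note that $A = (Ag)g^{-1}$ with $g$ the reverser from that theorem's proof, and then verify $g^2 = -\mathrm{I}_3$ and $(Ag)^2 = -\mathrm{I}_3$ — the latter being automatic from $gAg^{-1}=A^{-1}$ and $g^2 = -\mathrm{I}_3$ as in observation (b), so really only $g^2 = -\mathrm{I}_3$ needs checking, case by case. In fact, since the paragraph immediately preceding the proposition already asserts that the conjugating elements $\eqref{rev-type-1-skew}$--$\eqref{rev-type-4-skew}$ are skew-involutions, the proof can be quite short: cite that observation, set $s_1 = Ag$, $s_2 = g^{-1}$, and verify $s_1^2 = -\mathrm{I}_3$ using $g^2 = -\mathrm{I}_3$.

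The main obstacle, such as it is, is bookkeeping with quaternionic noncommutativity in the two "twisted" cases (iii) and (iv), where the conjugator has entries of the form $e^{-k\ib\theta}\jb$; one must consistently use $\jb z = \bar z\,\jb$ (equivalently $\jb\, e^{\ib\alpha} = e^{-\ib\alpha}\jb$) when collapsing products like $(e^{-k\ib\theta}\jb)(e^{-m\ib\theta}\jb) = e^{-k\ib\theta}e^{m\ib\theta}\jb^2 = -e^{(m-k)\ib\theta}$. There is no conceptual difficulty beyond this, so I expect the proof to be essentially a verification that the reversers already exhibited in \thmref{thm-rev-GL(3,H)} are skew-involutions, combined with the elementary lemma that a skew-involution reverser of $A$ decomposes $A$ into two skew-involutions.
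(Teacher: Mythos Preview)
Your proposal is correct and follows essentially the same route as the paper: reduce to the normal forms of \thmref{thm-rev-GL(3,H)}, observe that the explicit reversers $g$ given there in $\eqref{rev-type-1-skew}$--$\eqref{rev-type-4-skew}$ are skew-involutions, and use the elementary fact (your observation (b), which the paper states as the ``if and only if'' at the start of its proof) that a skew-involution reverser yields a decomposition $A=(Ag)\,g^{-1}$ into two skew-involutions. One small slip to fix in your case-(iv) check: the $(1,2)$ entry of $g^2$ is $g_{11}g_{12}+g_{12}g_{22}=e^{-4\ib\theta}\jb\cdot e^{-3\ib\theta}\jb + e^{-3\ib\theta}\jb\cdot(-e^{-2\ib\theta}\jb)$, not with a minus sign on the first factor as you wrote; with the correct signs the two terms are $-e^{-\ib\theta}$ and $+e^{-\ib\theta}$, which indeed cancel.
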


\begin{proof}
	Recall that an element $A$ in $\mathrm{SL}(3,\H)$ can be written as a product of two \textit{skew-involutions} if and only if there exists $g \in \mathrm{SL}(3,\H)$ such that $gAg^{-1} = A^{-1}$ and $g^2= - \mathrm{I}_3$. Without loss of generality, we can assume that $A$ is equal to one of the matrices listed in $(\ref{rev-type-1})- (\ref{rev-type-4})$ of \thmref{thm-rev-GL(3,H)}. We choose $g$ as defined in the proof of \thmref{thm-rev-GL(3,H)}, which is given by $\eqref{rev-type-1-skew} - \eqref{rev-type-4-skew}$, respectively. Then, $g$ is a skew-involution such that $gAg^{-1} = A^{-1}$. The proof now follows from \thmref{thm-rev-GL(3,H)}.
\end{proof}

The following lemma gives us strongly reversible elements in $ \mathrm{SL}(3,\mathbb{H})$.

\begin{lemma} \label{lem-strong-rev-GL(3,H)}
	Suppose $A \in \mathrm{SL}(3,\mathbb{H})$ is  one of the following matrices:
	\begin{enumerate}[(i)]
		\item\label{strong-rev-type-1} $\begin{pmatrix}
			e^{\ib \theta} &0 &0 \\
			0 &  e^{\ib \theta}& 0\\
			0 & 0 & e^{\ib \psi}
		\end{pmatrix}$,  where $\theta \in [0,\pi]$ and $\psi \in \{0,\pi\}$,
		
		\vspace{3mm} 
		
		\item \label{strong-rev-type-2}$\begin{pmatrix}
			r 	e^{\ib \theta} &0 &0 \\
			0 &  r^{-1} 	e^{\ib \theta} & 0\\
			0 & 0 & e^{\ib \psi}
		\end{pmatrix}$,  where $r \in \R^{+}, r \neq 1$,  and $\psi \in \{0,\pi\}$,
		\vspace{3mm}
		\item \label{strong-rev-type-3}$\begin{pmatrix}
			e^{\ib \theta} &1 &0 \\
			0 &  e^{\ib \theta} & 0\\
			0 & 0 & e^{\ib \psi}
		\end{pmatrix}$,  where $\theta, \psi \in \{0,\pi\}$,
		\vspace{3mm}
		\item  \label{strong-rev-type-4}$\begin{pmatrix}
			e^{\ib \theta} &1 &0 \\
			0 &  e^{\ib \theta} & 1\\
			0 & 0 & e^{\ib \theta}
		\end{pmatrix}$,  where $\theta \in \{0,\pi\}$.
	\end{enumerate}
	Then $A$ is strongly reversible in $\mathrm{SL}(3,\mathbb{H})$.
\end{lemma}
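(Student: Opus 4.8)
The plan is to produce, for each of the four matrix types listed, an explicit involution $g \in \mathrm{SL}(3,\H)$ (that is, $g^2 = \mathrm{I}_3$) such that $gAg^{-1} = A^{-1}$. This is exactly the definition of strong reversibility in $\mathrm{SL}(3,\H)$, so once such a $g$ is exhibited in each case, the lemma follows. The key structural observation driving the choice of $g$ is that in each of the four families the eigenvalue angles have been restricted so that the relevant off-diagonal entries are $\pm 1$ (real), or so that the repeated eigenvalue block $\mathrm{diag}(e^{\ib\theta}, e^{\ib\theta})$ has angle $\theta$ that may be arbitrary only in cases (i) and (iv)/(ii) partially, and the isolated eigenvalue $e^{\ib\psi}$ has $\psi \in \{0,\pi\}$, i.e.\ $e^{\ib\psi} = \pm 1$ is real and already equals its own inverse.

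For the concrete choices I would proceed as follows. In case (i), $A = \mathrm{diag}(e^{\ib\theta}, e^{\ib\theta}, \pm 1)$; conjugating the first $2\times 2$ block by $\mathrm{diag}(\jb,\jb)$ already works in $\mathrm{GL}(2,\H)$ but is a skew-involution, not an involution, so instead I use $g := \begin{pmatrix} 0 & 1 & 0 \\ -1 & 0 & 0 \\ 0 & 0 & 1\end{pmatrix}$ composed with the swap that sends $e^{\ib\theta}$ to $e^{-\ib\theta}$ — more carefully, since $\mathrm{diag}(e^{\ib\theta},e^{\ib\theta})$ is a central-looking block, I instead pick $g$ to be the product of the block $\begin{pmatrix} 0 & \jb \\ \jb & 0\end{pmatrix}$-type reverser with a correction; the cleanest route is to note $A$ is conjugate to a diagonal matrix with entries $e^{\pm\ib\theta}, \pm 1$, which is reversed by the permutation-type involution swapping the first two coordinates, and $\mathrm{diag}(e^{\ib\theta},e^{\ib\theta})$ is conjugate to $\mathrm{diag}(e^{\ib\theta},e^{-\ib\theta})$ via $\mathrm{diag}(1,\jb)$, so composing gives an honest involution. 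Cases (ii), (iii), (iv) are handled the same way: (ii) is $\mathrm{diag}(re^{\ib\theta}, r^{-1}e^{\ib\theta}, \pm1)$ reversed by swapping the first two coordinates via a suitable involution $g = \begin{pmatrix} 0 & a & 0 \\ a^{-1} & 0 & 0 \\ 0 & 0 & 1\end{pmatrix}$ with $a\in\H$ chosen to fix the angle mismatch and satisfy $g^2 = \mathrm{I}_3$; (iii) with $\theta,\psi\in\{0,\pi\}$ is a real upper-triangular matrix $\mathrm{diag}$-block $\begin{pmatrix} \pm1 & 1 \\ 0 & \pm1\end{pmatrix}$ which by Corollary~\ref{cor-simple-uppertriangular-real-eigenvalue} is simple, hence conjugate into $\mathrm{SL}(3,\R)$ where strong reversibility of real unipotent-type elements is classical and effected by a real involution such as $\mathrm{diag}(1,-1,1)$ up to conjugation; (iv) with $\theta\in\{0,\pi\}$ is $\pm$ a real regular unipotent $3\times3$ Jordan block, reversed in $\mathrm{SL}(3,\R)$ by the anti-diagonal involution $\mathrm{antidiag}(1,-1,1)$ (suitably signed to have determinant $1$ and square $\mathrm{I}_3$).

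The main obstacle I anticipate is bookkeeping the determinant and the order-two condition simultaneously: over $\H$ a swap-type matrix $\begin{pmatrix} 0 & a \\ b & 0\end{pmatrix}$ has $g^2 = \mathrm{diag}(ab, ba)$, so $g^2 = \mathrm{I}$ forces $ab = ba = 1$, i.e.\ $b = a^{-1}$ and $a$ need not commute with anything — fine — but one must still verify $\mathrm{det}_\H(g) = 1$ using the complex adjoint $\Phi(g)$, and in the parabolic cases (iii), (iv) one must check the chosen involution genuinely conjugates the Jordan block to its inverse rather than merely to a conjugate-looking matrix. The cleanest packaging, which I would adopt to minimize case analysis, is: for cases (i)–(ii) exhibit $g$ directly as a $2+1$ block involution with the $2\times2$ block a coordinate swap dressed by $\jb$ (relying on $\jb e^{\ib\theta}\jb^{-1} = e^{-\ib\theta}$ and the fact that the third diagonal entry is real), and for cases (iii)–(iv) invoke Corollary~\ref{cor-simple-uppertriangular-real-eigenvalue} to reduce to $\mathrm{SL}(3,\R)$ and cite (or quickly check) that real unipotent Jordan blocks of size $\le 3$ are strongly reversible in $\mathrm{SL}(3,\R)$ by an explicit real involution; the residual check that $\mathrm{diag}(e^{\ib\theta},e^{\ib\theta})$ with $\theta$ arbitrary still admits an order-two reverser (not just the order-four $\jb$-trick) is the one genuinely nonroutine point, and it is resolved by observing $\mathrm{diag}(e^{\ib\theta},e^{\ib\theta})$ is conjugate over $\H$ to $\mathrm{diag}(e^{\ib\theta},e^{-\ib\theta})$ so its reverser may be taken to be the coordinate-swap involution conjugated back, which has order two.
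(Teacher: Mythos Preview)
Your overall strategy—produce an explicit involution $g$ for each case—is exactly the paper's, and for cases (i)–(iii) your sketch, once unwound, yields precisely the paper's matrices: the block $\begin{pmatrix} 0 & \jb \\ -\jb & 0\end{pmatrix} \oplus (1)$ for (i) and (ii), and $\mathrm{diag}(1,-1,1)$ for (iii). The roundabout route through $\mathrm{diag}(1,\jb)$-conjugation and coordinate swaps is unnecessary; one simply writes these $g$'s down and checks $g^2=\mathrm{I}_3$ and $gAg=A^{-1}$ directly.

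There is, however, a genuine error in your treatment of case (iv). Any anti-diagonal matrix $w=\mathrm{antidiag}(\pm 1,\pm 1,\pm 1)$ conjugates the upper-triangular Jordan block $J(e^{\ib\theta},3)$ to a \emph{lower}-triangular matrix; this is similar to $A^{-1}$ but not equal to it, and composing with a further conjugacy to land on $A^{-1}$ will in general destroy the order-two condition. No choice of signs on the anti-diagonal fixes this. The paper instead uses the upper-triangular involution
\[
g=\begin{pmatrix} 1 & \varepsilon & 0 \\ 0 & -1 & 0 \\ 0 & 0 & 1\end{pmatrix},\qquad \varepsilon = e^{\ib\theta}\in\{1,-1\},
\]
for which $g^2=\mathrm{I}_3$ and $gAg=A^{-1}$ are immediate computations. (Your appeal to simplicity and reduction to $\mathrm{SL}(3,\R)$ for (iii)–(iv) is also superfluous: when $\theta,\psi\in\{0,\pi\}$ the matrices already have real entries.)
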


\begin{proof}
	First note that  $A$ is reversible in $\mathrm{SL}(3,\mathbb{H})$; see \thmref{thm-rev-GL(3,H)}.
	To prove that $A$ is  strongly reversible in  $\mathrm{SL}(3,\mathbb{H})$,  it is sufficient to find an element $g$ in $\mathrm{SL}(3,\mathbb{H})$ such that $gAg^{-1} = A^{-1}$ and $g^2=  \mathrm{I}_3$.   Note that  for  matrices given in $(\ref{rev-type-1})-(\ref{rev-type-4})$,
	we can choose a desired  \textit{involution} $g\in\mathrm{SL}(3,\mathbb{H})$ in the following way, respectively: 
	\begin{enumerate}[(a)]
		\item  $g := \begin{pmatrix}
			0 & \jb &0 \\
			-\jb & 0 & 0\\
			0 & 0 & 1 \\ 
		\end{pmatrix}$,
		\vspace{3mm}
		\item  $g :=  \begin{pmatrix}
			0 & \jb &0 \\
			-	\jb &  0 & 0\\
			0 & 0 & 1 \\ 
		\end{pmatrix}$,
		\vspace{3mm}
		\item   $ g :=  \begin{pmatrix}
			1 &0 &0 \\
			0 &  -1 & 0\\
			0 & 0 & 1 \\
		\end{pmatrix}$,
		\vspace{3mm}
		\item   $g :=   \begin{cases}
			\begin{pmatrix}
				1 & 1 &0 \\
				0 & -1 & 0\\
				0 & 0 & 1 \\
			\end{pmatrix} 	\vspace{3mm} & \text{if  $\theta = 0 $};\\
			
			\begin{pmatrix}
				1 &-1 &0 \\
				0 & -1 & 0\\
				0 & 0 & 1 \\
			\end{pmatrix}  & \text{if  $\theta =   \pi$}.
		\end{cases}$
	\end{enumerate}
Thus, we have an involution $g$ in $\mathrm{SL}(3,\mathbb{H})$ such that $gAg^{-1} = A^{-1}$.  This proves the lemma.
\end{proof}

The next result gives us elements in $\mathrm{SL}(3,\mathbb{H})$ which are reversible but not strongly reversible.
\begin{lemma}\label{lem-non-strong-rev-GL(3,H)}
	Suppose $A \in \mathrm{SL}(3,\mathbb{H})$ is  one of the following matrices:
	\begin{enumerate}
		\item\label{non-strong-rev-type-1} $\begin{pmatrix}
			e^{\ib \theta} &0 &0 \\
			0 &  e^{\ib \theta}& 0\\
			0 & 0 & e^{\ib \theta}
		\end{pmatrix}$,  where $\theta \in (0,\pi)$,
		
		\vspace{3mm} 
		
		\item\label{non-strong-rev-type-2}$\begin{pmatrix}
			e^{\ib \theta} &0 &0 \\
			0 &  e^{\ib \phi}& 0\\
			0 & 0 & e^{\ib \psi}
		\end{pmatrix}$,  where $\theta \neq \phi, \theta \neq \psi$,  $\theta\in (0,\pi)$, and $\phi, \psi \in [0,\pi]$, 
		
		\vspace{3mm} 
		
		\item \label{non-strong-rev-type-3}$\begin{pmatrix}
			re^{\ib \theta} &0 &0 \\
			0 &  r^{-1} e^{\ib \theta}& 0\\
			0 & 0 & e^{\ib \psi}
		\end{pmatrix}$,  where $r \in \R^{+}, r \neq 1$,  $\theta  \in [0, \pi]$ and $ \psi \in (0,\pi)$,
		\vspace{3mm}
		
		\item\label{non-strong-rev-type-5}$\begin{pmatrix}
			e^{\ib \theta} &1 &0 \\
			0 &  e^{\ib \theta} & 0\\
			0 & 0 & e^{\ib \psi}
		\end{pmatrix}$,  where $\theta \in \{0,\pi\}$,  $\psi \in (0,\pi)$,
		
		\vspace{3mm}
		
		\item \label{non-strong-rev-type-6}$\begin{pmatrix}
			e^{\ib \theta} &1 &0 \\
			0 &  e^{\ib \theta} & 0\\
			0 & 0 & e^{\ib \psi}
		\end{pmatrix}$,  where $\theta \neq \psi$ and  $\theta \in (0,\pi)$, $ \psi \in [0,\pi]$,
		
		\vspace{3mm}

		\item \label{non-strong-rev-type-7}$\begin{pmatrix}
			e^{\ib \theta} &1 &0 \\
			0 &  e^{\ib \theta} & 0\\
			0 & 0 & e^{\ib \theta}
		\end{pmatrix}$,  where $\theta \in (0,\pi)$,
		
		\vspace{3mm}
		
		\item  \label{non-strong-rev-type-8}$\begin{pmatrix}
			e^{\ib \theta} &1 &0 \\
			0 &  e^{\ib \theta} & 1\\
			0 & 0 & e^{\ib \theta}
		\end{pmatrix}$,  where $\theta \in (0,\pi)$.

	\end{enumerate}
	Then $A$ is not strongly reversible in $\mathrm{SL}(3,\mathbb{H})$.
\end{lemma}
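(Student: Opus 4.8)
The plan is to show, for each of the seven families, that no involution in $\mathrm{SL}(3,\H)$ conjugates $A$ to $A^{-1}$, so that by \thmref{thm-rev-GL(3,H)} these elements are reversible but not strongly reversible. The unifying idea is to take an arbitrary $g \in \mathrm{SL}(3,\H)$ with $gAg^{-1} = A^{-1}$ (equivalently $gA = A^{-1}g$), solve the resulting linear constraints on the entries of $g$ using \lemref{lem-ref- lAA- paper} and \lemref{lem-basic-reverser-calculation}, and then verify that every such $g$ fails $g^2 = \mathrm{I}_3$ — typically because the entries forced on $g$ lie in $\C\jb$ (from relations of the form $a e^{\ib\theta} = e^{-\ib\theta}a$ with $\theta \in (0,\pi)$), and such quaternions square to negative reals, forcing $g^2$ to have negative diagonal entries.

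First I would handle the diagonal cases \eqref{non-strong-rev-type-1}, \eqref{non-strong-rev-type-2}, \eqref{non-strong-rev-type-3}. Write $g = (g_{kl})$. The equation $gA = A^{-1}g$ becomes $g_{kl}\,\lambda_l = \lambda_k^{-1}\,g_{kl}$ for the diagonal eigenvalues $\lambda_k$. Using \lemref{lem-basic-reverser-calculation}\,(\ref{lem-basic-reverser-calculation-part-3}) the off-modulus constraints kill many entries, and \lemref{lem-ref- lAA- paper}\,\eqref{lem-basic-reverser-calculation-part-1} forces any surviving entry $g_{kl}$ that couples an eigenvalue to its inverse across a genuinely non-real rotation angle to lie in $\C\jb$. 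Then $(g^2)_{kk}$ is a sum of products $g_{kl}g_{lk}$, and one checks that in each configuration at least one diagonal entry of $g^2$ is a negative real or otherwise $\neq 1$; e.g. in \eqref{non-strong-rev-type-1} every $g_{kl}$ must satisfy $g_{kl}e^{\ib\theta} = e^{-\ib\theta}g_{kl}$ with $\theta\in(0,\pi)$, so $g \in \mathrm{M}(3,\C\jb)$ and $g^2 \in \mathrm{M}(3,\C)$ has the wrong sign pattern to equal $\mathrm{I}_3$. For \eqref{non-strong-rev-type-2} the distinctness $\theta\neq\phi,\psi$ with $\theta\in(0,\pi)$ makes $g$ block-diagonal with the $(1,1)$-entry in $\C\jb$, so $(g^2)_{11} < 0$. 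For \eqref{non-strong-rev-type-3} the third eigenvalue $e^{\ib\psi}$ with $\psi\in(0,\pi)$ forces $g_{33}\in\C\jb$, hence $(g^2)_{33}<0$.

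Next come the parabolic cases \eqref{non-strong-rev-type-5}, \eqref{non-strong-rev-type-6}, \eqref{non-strong-rev-type-7}, \eqref{non-strong-rev-type-8}. Here $A$ is a Jordan block (or block sum), so I would compute $A^{-1}$ explicitly and expand $gA = A^{-1}g$ entrywise; the superdiagonal $1$'s produce inhomogeneous relations handled by \lemref{lem-basic-reverser-calculation}\,(\ref{lem-basic-reverser-calculation-part-2}), which again pins the relevant entries of $g$ into $\C\jb \cup \{0\}$ and forces other entries to vanish. In \eqref{non-strong-rev-type-5} and \eqref{non-strong-rev-type-6} the offending entry is $g_{33}$ (resp. a $2\times2$ block entry) with rotation angle in $(0,\pi)$, giving a negative diagonal entry of $g^2$; in \eqref{non-strong-rev-type-7} and \eqref{non-strong-rev-type-8} every nonzero entry of $g$ must anticommute with $e^{\ib\theta}$, $\theta\in(0,\pi)$, so $g \in \mathrm{M}(3,\C\jb)$, whence $g^2 \in \mathrm{M}(3,\C)$ and, tracking the $2\times 2$ (resp. $3\times 3$) nilpotent structure, $(g^2)_{11}$ turns out to be a negative real. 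In all cases we conclude $g^2 \neq \mathrm{I}_3$, so $A$ is not a product of two involutions.

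I expect the main obstacle to be the bookkeeping in the two full-Jordan-block cases \eqref{non-strong-rev-type-7} and \eqref{non-strong-rev-type-8}: there the reverser $g$ is only upper/lower triangular up to the anticommuting constraint, and one must carefully solve the cascade of relations coming from the nilpotent part of $A$ (mirroring the explicit reverser $\eqref{rev-type-4-skew}$ found in \thmref{thm-rev-GL(3,H)}) to show that \emph{every} solution, not just the obvious one, has $g^2 = -\mathrm{I}_3$-type behaviour on the diagonal rather than $\mathrm{I}_3$. A clean way to organize this is to observe that if $gAg^{-1}=A^{-1}$ then $g$ permutes the Jordan structure of $A$ in a way compatible with \lemref{lem-ref- lAA- paper}, reducing to a bounded case analysis on the shape of $g$, after which the sign obstruction in $g^2$ is immediate.
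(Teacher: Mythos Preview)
Your proposal is essentially the paper's own argument: write $gA=A^{-1}g$ entrywise, use \lemref{lem-ref- lAA- paper} and \lemref{lem-basic-reverser-calculation} to pin down the shape of any reverser $g$, and then check $g^{2}\neq \mathrm{I}_3$ via the observation $(z\jb)^2=-|z|^2$ for $z\in\C\setminus\{0\}$. The only place your sketch is looser than the paper is case~\eqref{non-strong-rev-type-1}: there one finds $g=P\jb$ with $P\in\mathrm{GL}(3,\C)$, and ``wrong sign pattern'' for $g^2=-P\bar P$ is not quite an argument, since a priori $-P\bar P$ could be any complex matrix; the paper closes this with a determinant count, namely $(P\jb)^2=\mathrm{I}_3$ would force $|\det P|^2=\det(-\mathrm{I}_3)=-1$.
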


\begin{proof}
	Let $g =  \begin{pmatrix}
		a_{11} & a_{12}& a_{13} \\
		a_{21} & a_{22} & a_{23}\\
		a_{31} & a_{32} & a_{33} 
	\end{pmatrix}$ in $\mathrm{SL}(3,\mathbb{H})$ such that $gA g^{-1}= A^{-1}$.  Then on comparing both sides of the equation $gA= A^{-1}g$ and  using basic properties of the quaternions given in  \lemref{lem-ref- lAA- paper} and \lemref{lem-basic-reverser-calculation},  for the matrices given in $\eqref{non-strong-rev-type-1} - \eqref{non-strong-rev-type-8}$,  we get that matrix $g$ has the following form, respectively:
	\begin{enumerate}
		\item 
		$g= P \,  \jb $,  where $P \in  \mathrm{GL}(3,\mathbb{C})$,
			\vspace{3mm}
		\item $g= z \,  \jb  \oplus  P $,  where  $z \in \C \setminus \{0\} $ and $P\in  \mathrm{GL}(2,\mathbb{H})$,
			\vspace{3mm}
		\item  $g =  P  \oplus z \, \jb $,  where  $P\in  \mathrm{GL}(2,\mathbb{H})$ and $z \in \C \setminus \{0\}$,
			\vspace{3mm}
		\item  $g =  P  \oplus z\jb $,  where  $P\in  \mathrm{GL}(2,\mathbb{H})$ and $z \in \C \setminus \{0\}$,
		
		\vspace{3mm}
		\item $g =  \begin{pmatrix}
			- (e^{-2\ib \theta} u) \,  \jb & v \, \jb   \\
			0 &  u \, \jb  \\
		\end{pmatrix}\,  \oplus \,  (z) $,  where $u \in \C \setminus \{0\} $, $ v  \in \C$ and $z \in \H \setminus \{0\} $,
		
		\vspace{3mm}
		
		\item $g =  \begin{pmatrix}
			- (e^{-2\ib \theta} u) \,  \jb & v \, \jb & y \, \jb \\
			0 &  u \, \jb & 0\\
			0 & x \,  \jb & z \, \jb  \\
		\end{pmatrix}, $  where  $u,z \in \C \setminus \{0\}$ and $ x,v, y  \in \C$,

		\vspace{3mm}
		
		\item $g =  \begin{pmatrix}
			(e^{-4\ib \theta} z) \,  \jb &  ( - e^{-2\ib \theta} y + e^{-3\ib \theta} z) \, \jb & x \, \jb \\
			0 &  - (e^{-2\ib \theta} z) \,  \jb & y \, \jb \\
			0 & 0 & z \, \jb  \\
		\end{pmatrix}, $  where  $z \in \C \setminus \{0\}$ and $ x,y  \in \C$.
	\end{enumerate}
Now observe that for $ z \in \C \setminus \{0\} $ and  $P \in  \mathrm{GL}(3,\mathbb{C})$, the  following equations  hold:
	\begin{equation} \label{eq-1-non-strong-rev-GL(3,H)}
		(z \jb )^2 \neq 1,  \hbox{ i.e., }  |z|^2 \neq -1, \hbox{ and}
	\end{equation}
		\begin{equation}\label{eq-2-non-strong-rev-GL(3,H)}
		(P \jb)^2 \neq \mathrm{I}_3,  \hbox{ i.e.,  }  \, \mathrm {det}(P^{-1}) \neq \mathrm {det}(- \overline{P}) = -  \overline{\mathrm {det}(P)}.
	\end{equation}
Note that Equation \eqref{eq-1-non-strong-rev-GL(3,H)} and Equation \eqref{eq-2-non-strong-rev-GL(3,H)} imply that the matrix $g$ cannot be an involution. Therefore, there does not exist any $g \in \mathrm{SL}(3,\mathbb{H})$ such that $gAg^{-1} = A^{-1}$ and $g^2 = \mathrm{I}_3$. Hence, $A$ is not strongly reversible in $\mathrm{SL}(3,\mathbb{H})$. This completes the proof.
\end{proof}

The following theorem classifies strongly reversible elements in $\mathrm{SL}(3,\mathbb{H})$.
\begin{theorem}\label{thm-classification-str-rev-SL}
	Let $A \in \mathrm{SL}(3,\mathbb{H})$ be a reversible element.  Then $A$ is strongly reversible if and only if it is conjugate to one of the matrices given in \lemref{lem-strong-rev-GL(3,H)}.
\end{theorem}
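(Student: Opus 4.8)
The plan is to treat this statement as a bookkeeping consequence of the two preceding lemmas rather than to prove anything genuinely new. By \thmref{thm-rev-GL(3,H)}, a reversible $A\in\mathrm{SL}(3,\H)$ is conjugate to exactly one of the four normal forms listed there; \lemref{lem-strong-rev-GL(3,H)} exhibits a sublist of these forms that is strongly reversible, while \lemref{lem-non-strong-rev-GL(3,H)} exhibits a sublist that is reversible but not strongly reversible. The first task is to check that, up to conjugacy --- in particular up to permuting the diagonal entries in the semisimple cases and up to swapping $r\leftrightarrow r^{-1}$ in the loxodromic case --- every normal form of \thmref{thm-rev-GL(3,H)} occurs in exactly one of these two sublists. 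Granting this, the ``if'' direction follows at once from \lemref{lem-strong-rev-GL(3,H)} together with the fact that strong reversibility is a conjugacy-invariant property; and for the ``only if'' direction, a strongly reversible $A$ cannot be conjugate to a matrix from \lemref{lem-non-strong-rev-GL(3,H)}, hence must be conjugate to a matrix from \lemref{lem-strong-rev-GL(3,H)}.

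To carry out the case analysis I would run through \thmref{thm-rev-GL(3,H)}(i)--(iv), subdividing according to whether each angle lies in $\{0,\pi\}$ or in $(0,\pi)$, whether distinct angles coincide, and, in the loxodromic case, whether the elliptic eigenvalue is real. For the diagonalizable elliptic form, after permuting the diagonal entries one lands in one of three situations: two eigenvalues agree and the third equals $\pm1$ (including $A=\pm\mathrm{I}_3$), which is \lemref{lem-strong-rev-GL(3,H)}(\ref{strong-rev-type-1}); all three eigenvalues agree and equal a common non-real number, which is \lemref{lem-non-strong-rev-GL(3,H)}(\ref{non-strong-rev-type-1}); or some non-real eigenvalue $e^{\ib\theta}$ with $\theta\in(0,\pi)$ is distinct from the other two, which after moving it to the first slot is \lemref{lem-non-strong-rev-GL(3,H)}(\ref{non-strong-rev-type-2}). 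The diagonalizable loxodromic form lies in \lemref{lem-strong-rev-GL(3,H)}(\ref{strong-rev-type-2}) when its elliptic eigenvalue is $\pm1$, and in \lemref{lem-non-strong-rev-GL(3,H)}(\ref{non-strong-rev-type-3}) otherwise. The parabolic form carrying a $2\times2$ Jordan block lies in \lemref{lem-strong-rev-GL(3,H)}(\ref{strong-rev-type-3}) when both angles are in $\{0,\pi\}$, and otherwise in one of \lemref{lem-non-strong-rev-GL(3,H)}(\ref{non-strong-rev-type-5}), (\ref{non-strong-rev-type-6}), (\ref{non-strong-rev-type-7}) according to whether $\theta\in\{0,\pi\}$, $\theta\in(0,\pi)$ with $\psi\ne\theta$, or $\theta=\psi\in(0,\pi)$. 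Finally the parabolic form carrying a $3\times3$ Jordan block lies in \lemref{lem-strong-rev-GL(3,H)}(\ref{strong-rev-type-4}) if $\theta\in\{0,\pi\}$ and in \lemref{lem-non-strong-rev-GL(3,H)}(\ref{non-strong-rev-type-8}) if $\theta\in(0,\pi)$. In each instance the parameter constraints force membership in exactly one of the two lemmas, and since conjugacy preserves the Jordan form no matrix can appear in both lists; this yields the asserted dichotomy, and hence the theorem.

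The genuine content has already been established in \lemref{lem-strong-rev-GL(3,H)} and \lemref{lem-non-strong-rev-GL(3,H)}, so the only thing requiring care here is making the subdivision genuinely exhaustive and compatible with conjugation. The subtlest point is the third situation for the diagonalizable elliptic form: one must first conjugate by a permutation matrix to bring a non-real eigenvalue distinct from the remaining two into the leading coordinate before \lemref{lem-non-strong-rev-GL(3,H)}(\ref{non-strong-rev-type-2}) applies, and this is always possible because among three distinct angles in $[0,\pi]$ at least one lies in $(0,\pi)$ --- the same elementary observation that guarantees a form-(i) element with three distinct eigenvalues is never strongly reversible.
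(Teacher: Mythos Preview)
Your proposal is correct and takes essentially the same approach as the paper: the paper's proof is the single sentence ``The proof of theorem follows from \thmref{thm-rev-GL(3,H)}, \lemref{lem-strong-rev-GL(3,H)} and \lemref{lem-non-strong-rev-GL(3,H)},'' and you have simply written out the bookkeeping verification that the normal forms in those two lemmas partition the reversible forms of \thmref{thm-rev-GL(3,H)}. Your added care about permuting diagonal entries and the pigeonhole observation for three distinct angles is exactly the detail the paper leaves implicit.
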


\begin{proof}
The proof of theorem follows from \thmref{thm-rev-GL(3,H)}, \lemref{lem-strong-rev-GL(3,H)}  and \lemref{lem-non-strong-rev-GL(3,H)}.
\end{proof}

\section{Reversibility in the projective linear group $\mathrm{PSL}(3,\H)$}\label{sec-rev-projective-group}

In this section, we will investigate the reversibility problem in the projective linear group $\mathrm{PSL}(3,\H)$. Recall that $\mathrm{PSL}(3,\H) := \mathrm{SL}(3,\H)/\{\pm \mathrm{I}_{3}\}$, and for every element $[g] \in \mathrm{PSL}(3,\H)$, there are exactly two lifts, $g$ and $-g$, in the group $\mathrm{SL}(3,\H)$.  Note that every skew-involution  in $\mathrm{SL}(3,\H)$ is an  involution in $\mathrm{PSL}(3,\H)$. Moreover,  for every $g,h \in \mathrm{SL}(3,\H)$,  we have
$$ [g]=[h]  \Leftrightarrow  g=\pm h.$$
The next result relates reversibility in the groups $\mathrm{PSL}(3,\H)$ and $\mathrm{SL}(3,\H) $. 

\begin{lemma}\label{lem-rev-PSL-equiv-conditions}
	An element $[g] \in \mathrm{PSL}(3,\H)$ is reversible if and only if there exists $h \in \mathrm{SL}(3,\H)$ such that either of the following conditions holds.
	\begin{enumerate}
		\item $hgh^{-1}= g^{-1}$.
		\item $hgh^{-1}= -g^{-1}$.
	\end{enumerate}
\end{lemma}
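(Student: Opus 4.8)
The statement to prove is \lemref{lem-rev-PSL-equiv-conditions}: an element $[g] \in \mathrm{PSL}(3,\H)$ is reversible if and only if there exists $h \in \mathrm{SL}(3,\H)$ with either $hgh^{-1} = g^{-1}$ or $hgh^{-1} = -g^{-1}$.

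\textbf{Plan of proof.} The argument is essentially an unwinding of the definition of reversibility in the quotient group $\mathrm{PSL}(3,\H) = \mathrm{SL}(3,\H)/\{\pm \mathrm{I}_3\}$ together with the observation, recorded just before the statement, that $[g] = [h]$ in $\mathrm{PSL}(3,\H)$ if and only if $g = \pm h$ in $\mathrm{SL}(3,\H)$. First I would recall that $[g]$ is reversible in $\mathrm{PSL}(3,\H)$ means there is some $[h] \in \mathrm{PSL}(3,\H)$ with $[h][g][h]^{-1} = [g]^{-1}$. Choosing a lift $h \in \mathrm{SL}(3,\H)$ of $[h]$ (every element of $\mathrm{PSL}(3,\H)$ has a lift, and $\pm h$ are the two lifts), we have $[h][g][h]^{-1} = [hgh^{-1}]$ and $[g]^{-1} = [g^{-1}]$, since the quotient map is a homomorphism. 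Hence the reversibility condition $[hgh^{-1}] = [g^{-1}]$ translates, via the criterion $[a] = [b] \Leftrightarrow a = \pm b$, precisely into $hgh^{-1} = g^{-1}$ or $hgh^{-1} = -g^{-1}$. That establishes the forward implication.

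For the converse, suppose $h \in \mathrm{SL}(3,\H)$ satisfies one of the two displayed equations. In either case, projecting to $\mathrm{PSL}(3,\H)$ gives $[h][g][h]^{-1} = [hgh^{-1}] = [\pm g^{-1}] = [g^{-1}] = [g]^{-1}$, because $[g^{-1}]$ and $[-g^{-1}]$ are the same class in $\mathrm{PSL}(3,\H)$. Thus $[g]$ is conjugate to its inverse in $\mathrm{PSL}(3,\H)$, i.e., $[g]$ is reversible. This completes the proof.

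\textbf{Main obstacle.} There is essentially no serious obstacle here; the only point requiring a small amount of care is the bookkeeping of lifts — ensuring that a conjugating element $[h]$ in $\mathrm{PSL}(3,\H)$ can be lifted to an element of $\mathrm{SL}(3,\H)$ (which is immediate since the projection $\mathrm{SL}(3,\H) \to \mathrm{PSL}(3,\H)$ is surjective) and that the two cases $hgh^{-1} = \pm g^{-1}$ exhaust the preimage of the equation $[hgh^{-1}] = [g]^{-1}$ under the quotient map. One might also remark that if $hgh^{-1} = g^{-1}$ then $[g]$ is already reversible via a lift satisfying the ``untwisted'' equation, while the genuinely new phenomenon is the ``twisted'' case $hgh^{-1} = -g^{-1}$; the subsequent analysis (e.g. \lemref{lem-rev-PSL-type-2}) will handle exactly those elements $g \in \mathrm{SL}(3,\H)$ that are not reversible in $\mathrm{SL}(3,\H)$ but whose projective class becomes reversible because of this sign twist.
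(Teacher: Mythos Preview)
Your proof is correct and is exactly the straightforward unwinding the paper has in mind; indeed, the paper omits the proof entirely as being immediate from the identification $[a]=[b]\Leftrightarrow a=\pm b$ in $\mathrm{SL}(3,\H)$.
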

\begin{proof}
	We omit the proof since it is straightforward.
\end{proof}

In the rest of this section, we will investigate the equation $h g  h^{-1}= -g^{-1}$, where $g,h \in \mathrm{SL}(3,\H) $.

\begin{lemma}\label{lem-rev-PSL-type-2}
	An element $A \in \mathrm{SL}(3,\mathbb{H})$  satisfies the equation  $g A  g^{-1}= -A^{-1}$ for some $g \in \mathrm{SL}(3,\H) $,  if and only if it is conjugate in $\mathrm{SL}(3,\mathbb{H})$ to one of the following matrices:
	\begin{enumerate}[(i)]
		\item\label{rev-type-1-PSL} $\begin{pmatrix}
			e^{\ib \theta} &0 &0 \\
			0 &  -e^{-\ib \theta}& 0\\
			0 & 0 & \ib
		\end{pmatrix}$,  where $\theta \in [0,\pi]$,
		
		\vspace{3mm}
		\item \label{rev-type-2-PSL}$\begin{pmatrix}
			re^{\ib \theta} &0 &0 \\
			0 &  - r^{-1}e^{-\ib \theta}& 0\\
			0 & 0 & \ib
		\end{pmatrix}$,  where $r \in \R^{+}, r \neq 1$,  and $\theta \in [0,\pi]$,
		\vspace{3mm}
		\item \label{rev-type-3-PSL} $\begin{pmatrix}
			\ib  &1 &0 \\
			0 &  \ib  & 0\\
			0 & 0 & \ib
		\end{pmatrix},$
		
		\vspace{3mm}
		
		\item  \label{rev-type-4-PSL}$\begin{pmatrix}
			\ib  &1 &0 \\
			0 &  \ib  & 1\\
			0 & 0 & \ib
		\end{pmatrix}$.

	\end{enumerate}

\end{lemma}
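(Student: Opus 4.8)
The plan is to mimic the structure of the proof of \thmref{thm-rev-GL(3,H)}: first establish the ``if'' direction by exhibiting, for each of the four listed normal forms, an explicit conjugating element $g \in \mathrm{SL}(3,\mathbb{H})$ realizing $gAg^{-1} = -A^{-1}$; then establish the ``only if'' direction by a Jordan-form analysis. For the explicit part I expect the conjugators to again be built from the quaternion $\jb$ (and complex scalars times $\jb$) on appropriate blocks, exactly as in cases $\eqref{rev-type-1-skew}$--$\eqref{rev-type-4-skew}$ of \thmref{thm-rev-GL(3,H)}; the only new feature is the third diagonal entry $\ib$, which satisfies $\ib^{-1} = -\ib = -\ib^{-1}$ after negation, i.e. $-\ib^{-1} = \ib$, so a $1\times 1$ ``block'' conjugated by anything fixes it, and one checks $-A^{-1}$ has the same Jordan form as $A$ for each listed matrix by a direct computation of inverses and the relations $\jb z = \bar z \jb$, $\jb e^{\ib\theta} = e^{-\ib\theta}\jb$ from \lemref{lem-ref- lAA- paper}.

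For the ``only if'' direction, the key observation is that $gAg^{-1} = -A^{-1}$ forces $A$ to be conjugate to $-A^{-1}$, so by the Jordan decomposition over $\H$ (cited in \secref{sec-prel}) the similarity classes of eigenvalues of $A$ and of $-A^{-1}$ must coincide with matching block sizes. First I would record the eigenvalue bookkeeping: if $\lambda \in \C$ (non-negative imaginary part) is the representative of an eigenvalue class of $A$, then the corresponding class of $-A^{-1}$ has representative equal to the unique complex number with non-negative imaginary part similar to $-\lambda^{-1}$; since $-\lambda^{-1}$ has the same modulus as $\lambda^{-1} = 1/|\lambda|^2 \cdot \bar\lambda$, the condition $[\lambda] = [-\lambda^{-1}]$ splits into two mutually exclusive possibilities: either $|\lambda| \neq 1$, forcing the classes to come in pairs $\{[\lambda],[-\lambda^{-1}]\}$ (giving a $re^{\ib\theta}$, $-r^{-1}e^{-\ib\theta}$ pair as in \eqref{rev-type-2-PSL}), or $|\lambda|=1$ and $\lambda = -\lambda^{-1} = -\bar\lambda$ as complex numbers with non-negative imaginary part, which forces $\lambda = \ib$. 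Since $A$ is a $3\times 3$ matrix (odd size) and the non-unimodular eigenvalues come in pairs, at least one eigenvalue class must be the singleton $\{\ib\}$; combining this with the possible Jordan block structures for a $3\times 3$ matrix (and noting that a Jordan block of size $\geq 2$ with eigenvalue class $[\ib]$ paired with itself under $A \mapsto -A^{-1}$ is consistent, yielding \eqref{rev-type-3-PSL} and \eqref{rev-type-4-PSL}) exhausts exactly the four listed normal forms.

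The main obstacle, as in the analogous lemma, will be the Jordan-block compatibility argument rather than the eigenvalue count: one must check that the map $A \mapsto -A^{-1}$ not only preserves eigenvalue similarity classes but sends a Jordan block $\mathrm{J}(\lambda,s)$ to a matrix similar to $\mathrm{J}(\mu,s)$ where $\mu$ is the representative of $[-\lambda^{-1}]$, so that the partition of blocks into pairs and singletons is forced to respect the block sizes — this is where one uses that inversion and negation are algebraic operations commuting with conjugation, so they act on Jordan forms block-wise with the block size $s$ unchanged. A secondary technical point is verifying that the listed conjugators lie in $\mathrm{SL}(3,\mathbb{H})$, which follows since $\mathrm{det}_{\H}$ of a matrix with $\jb$-entries of the stated shape is a positive real (as in \remref{rem-conjugacy-SL(n,H)}), and one may rescale by a central positive scalar without affecting the conjugation identity. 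Once these points are in place, the statement follows by assembling the forced normal forms exactly as in the proof of \thmref{thm-rev-GL(3,H)}.
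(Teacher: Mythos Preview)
Your Jordan-form analysis for the ``only if'' direction is essentially the paper's argument and is correct: $A \sim -A^{-1}$ forces the Jordan blocks to partition into pairs $\{\mathrm{J}(\alpha,s),\mathrm{J}(-\alpha^{-1},s)\}$ with $[\alpha]\neq[\ib]$ together with singletons $\{\mathrm{J}(\ib,t)\}$, and enumerating these for a $3\times 3$ matrix yields exactly the four listed forms. (One small slip: your dichotomy only names the cases $|\lambda|\neq 1$ and $\lambda=\ib$, but unimodular $\lambda\neq\ib$ also pair nontrivially since $\mathrm{Re}(e^{\ib\theta})=\cos\theta\neq -\cos\theta=\mathrm{Re}(-e^{-\ib\theta})$ for $\theta\neq\pi/2$; this case is absorbed into form \eqref{rev-type-1-PSL}.)

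Your expectation for the ``if'' direction, however, is off. The paper does \emph{not} use $\jb$-based conjugators as in \thmref{thm-rev-GL(3,H)}; it takes
\[
g=\begin{pmatrix}0&1&0\\1&0&0\\0&0&1\end{pmatrix},\qquad
g=\begin{pmatrix}1&0&0\\0&-1&0\\0&0&1\end{pmatrix},\qquad
g=\begin{pmatrix}1&-\ib&0\\0&-1&0\\0&0&1\end{pmatrix}
\]
for cases \eqref{rev-type-1-PSL}--\eqref{rev-type-2-PSL}, \eqref{rev-type-3-PSL}, \eqref{rev-type-4-PSL} respectively, all of which are \emph{involutions}. The reason a $\jb$-ansatz fails here is that conjugation by $\jb$ sends $e^{\ib\theta}\mapsto e^{-\ib\theta}$, i.e.\ it realizes $\lambda\mapsto\bar\lambda=\lambda^{-1}$ on unit eigenvalues; but the present equation requires $\lambda\mapsto -\lambda^{-1}$. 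In the listed normal forms the two diagonal entries $e^{\ib\theta}$ and $-e^{-\ib\theta}$ already occupy the \emph{distinct} similarity classes $[\lambda]$ and $[-\lambda^{-1}]$, so the correct move is to \emph{swap} them with a permutation matrix, not to conjugate within each slot. Concretely, no $a\in\H^\times$ satisfies $a\,e^{\ib\theta}a^{-1}=-e^{-\ib\theta}$ unless $\theta=\pi/2$, since the two sides have real parts $\cos\theta$ and $-\cos\theta$. The distinction is not merely cosmetic: the paper uses precisely the fact that these $g$ are involutions in the very next step, \propref{prop-rev-type-2-PSL-skew-inv-prod}.
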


\begin{proof}
	Let $g  A  g^{-1} = -A ^{-1}$  for some $g \in \mathrm{SL}(3,\mathbb{H})$.  In view of the Jordan decomposition over $\H$, $A$ is conjugate to $-A^{-1}$ if and only if $A$ and $-A^{-1} $ have same Jordan form.  Now recall  that for a unique complex representative $\lambda$ of an eigenvalue class of  $A$,  $[\lambda]= [- \lambda^{-1}]$ if and only if $\lambda = \pm \ib$.  This implies that the blocks  in the Jordan form of $A$ can be partitioned into pairs $ \{ \mathrm{J}(\alpha, s),\mathrm{J}(-\alpha^{-1}, s)\} $,  or singletons $\{\mathrm{J}(\ib, t  )\}$,  where $\alpha \in \C \setminus \{0\}$  with non-negative imaginary part such that  $\alpha  \neq \ib$.  Hence, $A$ will be conjugate to one of the matrices given in  $(\ref{rev-type-1-PSL})-(\ref{rev-type-4-PSL})$. 
	
	Conversely,  for the matrices given in $(\ref{rev-type-1-PSL})-(\ref{rev-type-4-PSL})$,  we  choose an element  $g \in \mathrm{GL}(3,\mathbb{H})$ in the following way, respectively:
	\begin{enumerate}[(a)]
		\item\label{rev-type-1-inv}  $ g:= \begin{pmatrix}
			0 & 1 &0 \\
			1 &  0 & 0\\
			0 & 0 & 1
		\end{pmatrix}, $
		\vspace{3mm}
		\item \label{rev-type-2-inv}  $g:= \begin{pmatrix}
			0 & 1 &0 \\
			1 &  0 & 0\\
			0 & 0 & 1 
		\end{pmatrix}, $
		\vspace{3mm}
		\item \label{rev-type-3-inv} $g:=    \begin{pmatrix}
			1 &0 &0 \\
			0 &  -1  & 0\\
			0 & 0 & 1 \\
		\end{pmatrix}, $
		\vspace{3mm}
		\item \label{rev-type-4-inv} $g:=  \begin{pmatrix}
			1 & -\ib &0 \\
			0 &  - 1  & 0\\
			0 & 0 & 1 \\
		\end{pmatrix}.$ 
	\end{enumerate}
	Then  $g A g^{-1} = -A ^{-1}$.  This proves the lemma.
\end{proof}

Observe that for each matrix given in \lemref{lem-rev-PSL-type-2}, the conjugating element $g$ constructed in the proof of \lemref{lem-rev-PSL-type-2} is an involution in $\mathrm{SL}(3,\H) $. Therefore,  we have the following result.

\begin{proposition}\label{prop-rev-type-2-PSL-skew-inv-prod}
	Let $A \in \mathrm{SL}(3,\mathbb{H})$  be an element such that $g A g^{-1}= -A^{-1}$ for some   $g \in \mathrm{SL}(3,\H) $. Then $A$ can be written as a product of an involution and a skew-involution in $\mathrm{SL}(3,\H) $.
\end{proposition}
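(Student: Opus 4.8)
The plan is to reduce the statement to the explicit normal forms already produced in \lemref{lem-rev-PSL-type-2} and then to exploit the structure of the conjugating involutions constructed there. Recall the standard fact (used repeatedly in the excerpt, e.g.\ in \propref{prop-rev-prod-skew-inv}) that an element $A \in \mathrm{SL}(3,\H)$ is a product of an involution $\sigma$ and a skew-involution $\tau$ if and only if there exists $g \in \mathrm{SL}(3,\H)$ with $gAg^{-1} = A^{-1}$ and $g^2 = -\mathrm{I}_3$; indeed if $A = \sigma\tau$ with $\sigma^2 = \mathrm{I}_3$, $\tau^2 = -\mathrm{I}_3$, then $\sigma A \sigma^{-1} = \sigma(\sigma\tau)\sigma = \tau\sigma = \sigma^{-1}\tau^{-1}\cdot(-\mathrm{I}_3) = -A^{-1}\cdot$ (a central element)\,; so one has to be slightly careful and track the sign. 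So the first step is to record the correct algebraic criterion: $A$ is a product of an involution and a skew-involution if and only if there exists $g \in \mathrm{SL}(3,\H)$ with $g^2 = -\mathrm{I}_3$ and $gAg^{-1} = -A^{-1}$ (take $\sigma = g$-composed-with-a-sign appropriately, or directly set $\tau = g$, $\sigma = A g^{-1}$ and check $\sigma^2 = A g^{-1} A g^{-1} = A(gAg^{-1})^{-1}\cdot$ adjusting; the cleanest route is $A = (Ag^{-1})\cdot g$ and verify $(Ag^{-1})^2 = A g^{-1} A g^{-1} = A\,(-A^{-1})^{-1}g\cdot g^{-1} = -\mathrm{I}_3$, wait—so in fact $Ag^{-1}$ is a skew-involution and $g$ is a skew-involution too, which is even better but not what we want). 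I would therefore begin by writing out which of $g$, $Ag^{-1}$, $A^{-1}g$, $gA$ squares to $\pm\mathrm{I}_3$ given $gAg^{-1}=-A^{-1}$ and $g^2=\mathrm{I}_3$, landing on: if $g^2 = \mathrm{I}_3$ and $gAg^{-1} = -A^{-1}$ then $Ag$ is a skew-involution (since $(Ag)^2 = AgAg = A(gAg^{-1})g^2 = A(-A^{-1}) = -\mathrm{I}_3$) and $A = (Ag)\cdot g$ with $g$ an involution and $Ag$ a skew-involution.

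With that identity in hand, the second step is purely bookkeeping: by \lemref{lem-rev-PSL-type-2}, any $A$ satisfying $gAg^{-1}=-A^{-1}$ for some $g\in\mathrm{SL}(3,\H)$ is conjugate to one of the four matrices \eqref{rev-type-1-PSL}--\eqref{rev-type-4-PSL}, and the conjugating elements exhibited in parts \eqref{rev-type-1-inv}--\eqref{rev-type-4-inv} of that proof are each visibly involutions in $\mathrm{SL}(3,\H)$ (one checks $g^2 = \mathrm{I}_3$ by inspection in each of the four cases, and that $\mathrm{det}_\H(g) = 1$, which is automatic since $g$ is an involution with quaternionic determinant a non-negative real number, by the argument already used in the proof of the lemma following \lemref{lem-rev-SL(3,H)}). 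Being a product of an involution and a skew-involution is a conjugation-invariant property — if $A = \sigma\tau$ and $B = hAh^{-1}$ then $B = (h\sigma h^{-1})(h\tau h^{-1})$ with $h\sigma h^{-1}$ an involution and $h\tau h^{-1}$ a skew-involution — so it suffices to verify the claim for the four normal forms, which follows immediately from Step~1 applied to the explicit $g$'s of \eqref{rev-type-1-inv}--\eqref{rev-type-4-inv}.

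I do not expect a serious obstacle here; the proposition is essentially a corollary of \lemref{lem-rev-PSL-type-2} once the right algebraic reformulation is pinned down. The only place demanding care is the sign accounting in Step~1 — making sure that the factorization $A = (Ag)\cdot g$ genuinely has the first factor a \emph{skew}-involution and the second an \emph{ordinary} involution (as opposed to producing two skew-involutions, which would instead re-prove something like \propref{prop-rev-prod-skew-inv}), and confirming that the determinant condition places both factors in $\mathrm{SL}(3,\H)$ rather than merely $\mathrm{GL}(3,\H)$. The latter is handled exactly as in the earlier lemmas: an involution or skew-involution $h$ over $\H$ has $\Phi(h)^2 = \pm\mathrm{I}_6$, hence $\mathrm{det}(\Phi(h)) = \pm 1$, and since $\mathrm{det}_\H$ is non-negative it equals $1$. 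Thus the whole argument is a short synthesis of \lemref{lem-rev-PSL-type-2} with the involution/skew-involution factorization dictionary, and I would write it up in a few lines.
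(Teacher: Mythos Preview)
Your proposal is correct and follows essentially the same route as the paper: reduce via \lemref{lem-rev-PSL-type-2} to an \emph{involution} $g$ with $gAg^{-1}=-A^{-1}$, then factor $A$ as a skew-involution times that involution. Your factor $Ag$ is in fact the same element as the paper's $-g^{-1}A^{-1}$ (from $gAg=-A^{-1}$ with $g=g^{-1}$ one gets $Ag=-g^{-1}A^{-1}$), so once the exploratory detours in your Step~1 are trimmed the argument is identical.
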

\begin{proof}
	In view of the \lemref{lem-rev-PSL-type-2}, without loss of generality, we can assume that there exists an involution $g \in \mathrm{SL}(3,\H) $ such that $gA g^{-1}= -A^{-1}$. Therefore, 
	$$A = (- g^{-1}  A^{-1})   (g),  \hbox{ and } g^{-1}= g.$$
	This implies  $(- g^{-1}  A^{-1})^2  = (g^{-1}  A^{-1}g^{-1}) (A^{-1}) =  (g^{-1}  A^{-1}  g)  (A^{-1})= - A  A^{-1} = -\mathrm{I}_3$. This completes the proof.
\end{proof}

Unlike $\mathrm{SL}(3,\H)$,  reversible and strongly reversible elements are equivalent in $\mathrm{PSL}(3,\H) $. This is one of our main results from this section and can be proved as follows.

\subsection{Proof of \thmref{thm-main-equiv-PSL-1}}
Note that every involution and skew-involution in $\mathrm{SL}(3,\mathbb{H})$ is an involution in $\mathrm{PSL}(3,\mathbb{H})$. Using Proposition \ref{prop-rev-prod-skew-inv}, Lemma \ref{lem-rev-PSL-equiv-conditions}, and Proposition \ref{prop-rev-type-2-PSL-skew-inv-prod}, we can conclude that every reversible element of $\mathrm{PSL}(3,\mathbb{H})$ can be written as the product of two involutions in $\mathrm{PSL}(3,\mathbb{H})$. Hence, the proof follows.	\qed

\section{Decomposition into simple transformations}\label{sec-decom}
In this section, we will prove  \thmref{th:simple4}. Suppose $\theta$, $\phi$, and $\psi$ are all in the interval $[0,\pi]$. In   Lemmas \ref{lem:se}, \ref{lem:sep}, \ref{lem:set},  assume that $e^{\ib \theta},e^{\ib\phi}$ and $e^{\ib\psi}$ are not all equal to $\pm 1$;  otherwise, the matrix is real and hence simple.

\begin{lemma}
	\label{lem:se}
	If the Jordan form of a matrix $A\in \mathrm{SL}(3,\mathbb{H})$ is
	$\begin{pmatrix}
		e^{\ib \theta} & 0 & 0\\0 & e^{\ib\phi} & 0\\0 & 0 & e^{\ib \psi}
	\end{pmatrix}$ then it can be written as a product of $3$ simple matrices. 
\end{lemma}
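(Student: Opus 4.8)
The plan is to realize the diagonal matrix $D=\mathrm{diag}(e^{\ib\theta},e^{\ib\phi},e^{\ib\psi})$ as a product of three matrices, each of which is conjugate into $\mathrm{SL}(3,\R)$, using \propref{prop-simple-classification} (equivalently \remref{rem-simple-complex-eigenvalues}) as the working criterion for simplicity. The key observation is that a diagonal quaternionic matrix $\mathrm{diag}(r_1e^{\ib\alpha_1},r_2e^{\ib\alpha_2},r_3e^{\ib\alpha_3})$ with $r_1r_2r_3=1$ lands in $\mathrm{SL}(3,\H)$, and it is simple precisely when either all three entries are real, or it has a non-real eigenvalue of multiplicity two and the third eigenvalue real. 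Since $\det_\H D = 1$ is automatic here (the moduli are all $1$), I have a lot of freedom to split the three phases $\theta,\phi,\psi$ across three factors so that each factor meets one of these two simplicity patterns.

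First I would reduce to the case $A=D$ itself, since simplicity is a conjugacy-invariant notion and "product of $k$ simple elements" is preserved under conjugating $A$ (if $A = s_1s_2s_3$ with each $s_i$ simple, then $gAg^{-1} = (gs_1g^{-1})(gs_2g^{-1})(gs_3g^{-1})$ with each factor still simple). Next, the main step: write $D = D_1 D_2 D_3$ where each $D_i$ is diagonal with determinant-$\H$ equal to $1$ and each $D_i$ has the property that its non-real entries (if any) form a single repeated eigenvalue. A clean way to do this: set $D_1 = \mathrm{diag}(e^{\ib\theta}, e^{\ib\theta}, e^{-2\ib\theta})$ — wait, that is not quite right since I need to control the product — so instead pick $D_1 = \mathrm{diag}(e^{\ib\theta},e^{\ib\theta},\pm 1)$, $D_2 = \mathrm{diag}(\mp e^{-\ib\theta}, e^{\ib\phi}, e^{\ib\phi}')$-type factors, arranged so the phases telescope: concretely, choose the factors so that the first is $\mathrm{diag}(e^{\ib\theta}, *, *)$ absorbing $\theta$ with a matching repeated phase and a real third entry, the second absorbs $\phi$ similarly, the third absorbs $\psi$, and the "leftover" phases introduced to keep each factor simple cancel across the product. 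Each such $D_i$ is then simple by \remref{rem-simple-complex-eigenvalues} (it is $\mathrm{diag}(re^{\ib\alpha}, re^{\ib\alpha}, r^{-2}e^{\ib\beta})$ with $r=1$, $\beta \in \{0,\pi\}$), hence conjugate to a real matrix.

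The arithmetic bookkeeping of which phase goes where — ensuring simultaneously that (a) each factor's product of entries is $1$ (here automatic since all moduli are $1$), (b) each factor has its non-real eigenvalue repeated with the remaining one in $\{1,-1\}$, and (c) the three factors multiply back to $D$ — is the part that needs care, and I expect that to be the main obstacle: it is essentially a small system of equations in the exponents mod $2\pi$, and one must check the degenerate subcases (some of $\theta,\phi,\psi$ equal to $0$ or $\pi$, or equal to each other) do not break the construction, though in those cases the matrix only becomes "more real" and the decomposition still works (possibly with a factor that is already real). Once the explicit $D_1,D_2,D_3$ are written down, verifying $D=D_1D_2D_3$ is a one-line diagonal computation and each $D_i$ is handled by the already-established \remref{rem-simple-complex-eigenvalues}. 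I would close by noting that three factors are necessary in general (a single non-real eigenvalue of multiplicity one, occurring when $\theta,\phi,\psi$ are pairwise distinct, obstructs fewer factors), consistent with \tabref{table:1}.
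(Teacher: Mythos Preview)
Your plan is correct and takes a genuinely different route from the paper's. The paper factors $D=\mathrm{diag}(e^{\ib\theta},e^{\ib\phi},e^{\ib\psi})$ as
\[
\begin{pmatrix} e^{\ib\theta} & 0 & 0\\ 0 & e^{\ib(\phi+\psi)} & 0\\ 0 & 0 & 1\end{pmatrix}
\begin{pmatrix} 1 & 0 & 0\\ 0 & e^{-\ib\psi} & 0\\ 0 & 0 & e^{\ib\psi}\end{pmatrix},
\]
notes the second factor is simple directly (quaternionic eigenvalue class $[e^{\ib\psi}]$ with multiplicity~$2$ and a real eigenvalue~$1$), and then handles the first factor by embedding $\mathrm{SL}(2,\H)\hookrightarrow\mathrm{SL}(3,\H)$ and invoking \cite[Theorem~1.4]{PS} to supply the remaining two simple factors. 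Your route avoids the external reference entirely: taking $D_1=\mathrm{diag}(e^{\ib\alpha},e^{\ib\alpha},1)$, $D_2=\mathrm{diag}(1,e^{\ib\beta},e^{\ib\beta})$, $D_3=\mathrm{diag}(e^{\ib\gamma},1,e^{\ib\gamma})$, the ``bookkeeping'' you flag as the obstacle is the linear system $\alpha+\gamma=\theta$, $\alpha+\beta=\phi$, $\beta+\gamma=\psi$, solved by $\alpha=(\theta+\phi-\psi)/2$, $\beta=(-\theta+\phi+\psi)/2$, $\gamma=(\theta-\phi+\psi)/2$; each $D_i$ is then simple by \propref{prop-simple-classification} whether or not its repeated entry happens to be real. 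What you gain is a self-contained argument internal to this paper; what the paper's version buys is direct reuse of the $\mathrm{SL}(2,\H)$ decomposition already in the literature. Your closing remark that three factors are \emph{necessary} in general is not part of the lemma's assertion and would require a separate argument if you want to keep it.
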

\begin{proof}
	Write $A=\begin{pmatrix}
		e^{\ib \theta} & 0 & 0\\0 & e^{\ib\phi} & 0\\0 & 0 & e^{\ib \psi}
	\end{pmatrix}=\begin{pmatrix}
		e^{\ib \theta} & 0 & 0\\0 & e^{\ib\phi'} & 0\\0 & 0 & 1
	\end{pmatrix}\begin{pmatrix}
		1 & 0 & 0\\0 & e^{-\ib \psi} & 0\\0 & 0 & e^{\ib \psi}
	\end{pmatrix},$ where $\phi'=\phi+ \psi$. 
	By considering a suitable embedding  of  $ \mathrm{SL}(2, \mathbb{H})$ into $ \mathrm{SL}(3, \mathbb{H})$
	and then using  \cite[Theorem 1.4]{PS},
	we can show that the first matrix on the right-hand side can be written as a product of $2$ simple matrices, and the second matrix is a simple matrix.   Therefore,  matrix  $A$ can be written as a product of $3$ simple matrices. 
\end{proof}

\begin{lemma}
	\label{lem:sep}
	If the Jordan form of a matrix $A\in \mathrm{SL}(3,\mathbb{H})$ is 
	$\begin{pmatrix}
		e^{\ib \theta} & 1 & 0\\0 & e^{\ib \theta} & 0\\0 & 0 & e^{\ib \psi}
	\end{pmatrix}$ then it can be written as a product of $3$ simple matrices.
\end{lemma}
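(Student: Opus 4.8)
The plan is to mimic the factorization used in the proof of \lemref{lem:se}, splitting off the parabolic block from the elliptic eigenvalue on the diagonal. Writing $\phi' := \psi$ (absorbed into the first factor), I would factor
\[
A = \begin{pmatrix}
e^{\ib\theta} & 1 & 0\\ 0 & e^{\ib\theta} & 0\\ 0 & 0 & e^{\ib\psi}
\end{pmatrix}
= \begin{pmatrix}
e^{\ib\theta} & 1 & 0\\ 0 & e^{\ib\theta} & 0\\ 0 & 0 & 1
\end{pmatrix}
\begin{pmatrix}
1 & 0 & 0\\ 0 & 1 & 0\\ 0 & 0 & e^{\ib\psi}
\end{pmatrix},
\]
possibly after conjugating to move the $1$ entry so that the product comes out right; a cleaner variant is to write the second factor as $\mathrm{diag}(1, e^{-\ib\psi}, e^{\ib\psi})$ and the first as the Jordan block with eigenvalue $e^{\ib\theta}$ in the top-left $2\times 2$ and $e^{\ib\psi}$ in the bottom-right slot, exactly paralleling \lemref{lem:se}. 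The second factor is a diagonal matrix with one non-real eigenvalue $e^{\ib\psi}$ of multiplicity... no — here the repeated eigenvalue is $1$, which is real, and the single non-real eigenvalue $e^{-\ib\psi}$ (or $e^{\ib\psi}$) has multiplicity one, so \propref{prop-simple-classification} does not immediately apply. The fix is to instead split $e^{\ib\psi}$ off against one of the diagonal $e^{\ib\theta}$'s: pair the $(3,3)$ entry with a conjugate partner so the non-real eigenvalue appears with multiplicity two in one factor.

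Concretely, I would write
\[
\begin{pmatrix}
e^{\ib\theta} & 1 & 0\\ 0 & e^{\ib\theta} & 0\\ 0 & 0 & e^{\ib\psi}
\end{pmatrix}
=
\begin{pmatrix}
1 & 1 & 0\\ 0 & e^{\ib\theta} & 0\\ 0 & 0 & e^{\ib\psi}
\end{pmatrix}
\begin{pmatrix}
e^{\ib\theta} & 0 & 0\\ 0 & 1 & 0\\ 0 & 0 & 1
\end{pmatrix},
\]
where the last factor is diagonal with one non-real eigenvalue of multiplicity one together with two $1$'s — still not simple by \propref{prop-simple-classification}. This tells me the right structural move is the one used in \lemref{lem:se}: treat the top-left Jordan block together with the bottom eigenvalue as a genuine $\mathrm{SL}(2,\H)$-problem after one scalar split, and invoke \cite[Theorem 1.4]{PS}. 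So I would factor $A = B C$ where
\[
B = \begin{pmatrix}
e^{\ib\theta} & 1 & 0\\ 0 & e^{\ib\theta} & 0\\ 0 & 0 & e^{\ib(\theta+\theta+\psi)^{-1}\cdots}
\end{pmatrix}
\]
— more carefully, choose $B$ to have determinant-adjusted bottom entry so that $B$ restricted to a suitable $\mathrm{SL}(2,\H)$-embedding is the loxo-parabolic/ellipto-parabolic element handled by Parker–Short, giving $B$ as a product of two simple matrices, and $C = \mathrm{diag}(1, 1, e^{\ib\psi}) \cdot (\text{scalar correction})$, which by \corref{cor-simple-uppertriangular-real-eigenvalue} after conjugation, or directly by \remref{rem-simple-complex-eigenvalues}, is a single simple matrix. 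Then $A$ is a product of three simple matrices.

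The main obstacle, as in \lemref{lem:se}, is making the bookkeeping of the $\mathrm{SL}(2,\H) \hookrightarrow \mathrm{SL}(3,\H)$ embedding precise: one must check that the $2\times 2$ block $\begin{pmatrix} e^{\ib\theta} & 1\\ 0 & e^{\ib\theta}\end{pmatrix}$, after pairing with the third coordinate and a scalar normalization so that the $2\times 2$ piece has quaternionic determinant one, is covered by \cite[Theorem 1.4]{PS} as a product of two matrices each conjugate into $\mathrm{SL}(2,\R)$, and that such a decomposition extends coordinatewise to $\mathrm{SL}(3,\H)$ with the third factor remaining simple. The only subtlety beyond \lemref{lem:se} is that the block is now a nontrivial Jordan block rather than diagonal, so one needs the Parker–Short decomposition in the parabolic case rather than the elliptic case; since the excerpt already cites \cite[Theorem 1.4]{PS} for exactly this purpose in \lemref{lem:se}, I expect this to go through with only cosmetic changes. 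I would close by noting that if $e^{\ib\theta}, e^{\ib\psi}$ are both $\pm 1$ the matrix is already real, hence simple, so the interesting case is the one treated above.
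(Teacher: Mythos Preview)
Your proposal has a genuine gap at the final step. You assert that $C = \mathrm{diag}(1,1,e^{\ib\psi})$ (possibly times a scalar correction) is a single simple matrix, appealing to \corref{cor-simple-uppertriangular-real-eigenvalue} and \remref{rem-simple-complex-eigenvalues}. Neither applies: \corref{cor-simple-uppertriangular-real-eigenvalue} requires \emph{all} diagonal entries to be real, and \remref{rem-simple-complex-eigenvalues} treats $\mathrm{diag}(re^{\ib\theta},re^{\ib\theta},r^{-2}e^{\ib\psi})$ with $\psi\in\{0,\pi\}$, i.e., the repeated eigenvalue is the non-real one. In $\mathrm{diag}(1,1,e^{\ib\psi})$ with $\psi\in(0,\pi)$ the non-real eigenvalue has multiplicity one, so by \propref{prop-simple-classification} this matrix is \emph{not} simple. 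Your decomposition therefore gives only $2+?$ simples with the last factor unresolved.

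The irony is that the ``cleaner variant'' you dismissed actually works. In $\H$ the elements $e^{\ib\psi}$ and $e^{-\ib\psi}$ are similar, so $\mathrm{diag}(1,e^{-\ib\psi},e^{\ib\psi})$ has the non-real eigenvalue class $[e^{\ib\psi}]$ with multiplicity \emph{two} and is simple by \propref{prop-simple-classification} (this is exactly the content of \remref{rem-simple-complex-eigenvalues} with $r=1$). With that choice of $C$, the cofactor $B=AC^{-1}=\begin{pmatrix} e^{\ib\theta} & e^{\ib\psi}\\ 0 & e^{\ib(\theta+\psi)}\end{pmatrix}\oplus(1)$ lies in (the image of) $\mathrm{SL}(2,\H)$ and is a product of two simples by \cite[Theorem~1.4]{PS}, giving three in total. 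Your rejection of this variant stems from forgetting that quaternionic eigenvalues are similarity classes, not individual complex numbers.

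The paper's argument exploits the same phenomenon but organizes it differently. One first conjugates $A$ by $W=\mathrm{diag}(e^{-\ib\theta},e^{\ib\theta},1)$ to replace the off-diagonal $1$ by $e^{2\ib\theta}$, and then factors the result as
\[
Q\,R \;=\; \Big(\begin{pmatrix} e^{\ib\theta} & 1\\ 0 & e^{-\ib\theta}\end{pmatrix}\oplus(1)\Big)\cdot \mathrm{diag}(1,\,e^{2\ib\theta},\,e^{\ib\psi}).
\]
Here $Q$ is itself a \emph{single} simple matrix: its $2\times2$ block has the conjugate pair $e^{\pm\ib\theta}$ as eigenvalues, hence one non-real class of multiplicity two. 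The diagonal $R$ carries its two non-real entries in an $\mathrm{SL}(2,\H)$-block and is a product of two simples via \cite[Theorem~1.4]{PS}. So in the paper the roles are reversed relative to your (corrected) variant---the upper-triangular factor is the single simple and the diagonal factor costs two---but both arguments ultimately rest on pairing each isolated non-real eigenvalue with its quaternionic conjugate. Your final proposal fails precisely because $e^{\ib\psi}$ is left unpaired.
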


\begin{proof}
	In this case, $A$ can be written as 
	\begin{align*}
		A=\begin{pmatrix}
			e^{\ib \theta} & 1 & 0\\0 & e^{\ib \theta} & 0\\0 & 0 & e^{\ib \psi}
		\end{pmatrix}
		=\begin{pmatrix}
			e^{-\ib \theta} & 0 & 0\\0 & e^{\ib \theta}& 0\\0 & 0 & 1
		\end{pmatrix}\begin{pmatrix}
			e^{\ib \theta} & e^{2\ib\theta} & 0\\0 & e^{\ib \theta} & 0\\0 & 0 & e^{\ib \psi}
		\end{pmatrix}\begin{pmatrix}
			e^{\ib \theta} & 0 & 0\\0 & e^{-\ib \theta} & 0\\0 & 0 & 1
		\end{pmatrix} =WPW^{-1}.
	\end{align*}
	Further, note that
	\begin{align*}
		P=\begin{pmatrix}
			e^{\ib \theta} & e^{2\ib\theta} & 0\\0 & e^{\ib \theta} & 0\\0 & 0 & e^{\ib \psi}
		\end{pmatrix}
		=\begin{pmatrix}
			e^{\ib \theta} & 1 & 0\\0 & e^{-\ib \theta} & 0\\0 & 0 & 1
		\end{pmatrix}\begin{pmatrix}
			1 & 0 & 0\\0 & e^{2\ib\theta} & 0\\0 & 0 & e^{\ib \psi}
		\end{pmatrix}=QR.
	\end{align*}
By considering a suitable embedding of $\mathrm{SL}(2, \mathbb{H})$ into $\mathrm{SL}(3, \mathbb{H})$ and then using \cite[Theorem 1.4]{PS}, we can show that the matrix $Q$ is a simple matrix, and the matrix $R$ can be written as a product of $2$ simple matrices. Therefore, the matrix $P$ and hence $A$ can be written as a product of $3$ simple matrices.
\end{proof}

\begin{lemma}
	\label{lem:set}
	If the Jordan form of a matrix $A\in \mathrm{SL}(3,\mathbb{H})$ is
	$\begin{pmatrix}
		e^{\ib \theta} & 1 & 0\\0 & e^{\ib \theta} & 1\\0 & 0 & e^{\ib \theta}
	\end{pmatrix}$ then it can be written as a product of $4$ simple matrices. 
\end{lemma}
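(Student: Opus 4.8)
The plan is to mimic the strategy of Lemmas~\ref{lem:se} and~\ref{lem:sep}: reduce the single Jordan block of size $3$ to a matrix that visibly decomposes into simple pieces, using a conjugation to strip off the unimodular scalar $e^{\ib\theta}$ from part of the block and then recognizing the remaining factors as simple by \propref{prop-simple-classification} (or its corollary, \corref{cor-simple-uppertriangular-real-eigenvalue}), possibly after an embedding of $\mathrm{SL}(2,\H)$ and an appeal to \cite[Theorem 1.4]{PS}. Concretely, write $A$ up to conjugacy as $WPW^{-1}$ where $W=\mathrm{diag}(e^{-\ib\theta},1,e^{\ib\theta})$ (or a similar diagonal matrix), so that
\begin{align*}
	P = \begin{pmatrix}
		e^{\ib\theta} & e^{2\ib\theta} & 0\\
		0 & e^{\ib\theta} & e^{2\ib\theta}\\
		0 & 0 & e^{\ib\theta}
	\end{pmatrix}
\end{align*}
or some convenient variant; the point of the conjugation is only to adjust the super-diagonal entries so that the subsequent factorization is clean, and since $A$ is simple iff $P$ is, it suffices to decompose $P$.

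Next I would factor $P$ as a product of upper-triangular matrices whose diagonal entries are arranged to be real (hence simple by \corref{cor-simple-uppertriangular-real-eigenvalue}) together with at most one genuinely ``quaternionic'' factor that is itself handled via the $\mathrm{SL}(2,\H)\hookrightarrow\mathrm{SL}(3,\H)$ embedding and \cite[Theorem 1.4]{PS}, which was already invoked in Lemmas~\ref{lem:se} and~\ref{lem:sep} to show that a $2\times 2$ elliptic block costs at most two simple matrices. For instance one can peel off a lower-right $2\times2$ block of the form $\begin{pmatrix} e^{\ib\theta} & \ast\\ 0 & e^{\ib\theta}\end{pmatrix}$ embedded in $\mathrm{SL}(3,\H)$ (padding with a $1$ in the top-left, which keeps the determinant and keeps the padded matrix real-reducible), write that as a product of two simple matrices, and then check that what remains — after clearing the appropriate entries — is a product of two upper-triangular matrices with real diagonals, i.e.\ two simple matrices. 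Combining, $P$, and hence $A$, is a product of at most $2+2=4$ simple matrices. One must be slightly careful that the real-diagonal upper-triangular factors genuinely lie in $\mathrm{SL}(3,\H)$ (determinant one), but this can always be arranged by distributing a real scalar, exactly as in \remref{rem-conjugacy-SL(n,H)}.

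I would then verify that $4$ is actually achieved and cannot be trivially improved here in the obvious way: the size-$3$ unipotent-type block genuinely needs the extra factor because, unlike the cases in Lemmas~\ref{lem:se} and~\ref{lem:sep}, there is no ``free'' diagonal $e^{\ib\psi}$ slot to absorb a compensating rotation, so the $2\times2$ elliptic sub-block truly costs two and the leftover upper-triangular part costs two more. (This matches the entry $4$ recorded for this Jordan form in Table~\ref{table:1}.) A clean way to organize the write-up is: (1)~reduce to $P$ by conjugation; (2)~exhibit $P=P_1P_2$ with $P_1$ real-diagonal upper triangular and $P_2$ of the form (embedded $2\times2$ elliptic block)$\,\oplus\,(1)$ or similar; (3)~apply \cite[Theorem 1.4]{PS} to $P_2$ to get two simple matrices; (4)~observe $P_1$ is simple by \corref{cor-simple-uppertriangular-real-eigenvalue}; (5)~but $P_1$ by itself is not of determinant one in general, so instead split once more so that the final tally is $4$.

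The main obstacle I anticipate is bookkeeping the determinants and the ``real diagonal'' condition simultaneously: when one pulls the scalar $e^{\ib\theta}$ out of a $2\times2$ block the complementary factor acquires a non-real diagonal entry, and one has to reshuffle the decomposition so that every factor is either a real-entried upper-triangular $\mathrm{SL}(3,\H)$ matrix or an $\mathrm{SL}(2,\H)$-block known to split into two simple matrices by \cite[Theorem 1.4]{PS}. Getting this shuffle to land on exactly four factors — rather than five — is the delicate combinatorial point, and it is essentially the same trick as in Lemma~\ref{lem:sep} (introduce the factor $e^{2\ib\theta}$ on the super-diagonal so that the two halves of the block split off cleanly), now iterated once more for the second super-diagonal entry. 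Everything else is routine matrix multiplication over $\H$ together with the already-cited characterization of simple matrices.
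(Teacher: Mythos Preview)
Your proposal is a strategy sketch, not a proof: the concrete four-factor decomposition is never exhibited, and you explicitly flag the crux (``getting this shuffle to land on exactly four factors\dots is the delicate combinatorial point'') as unresolved. There are also slips in the outline. The matrix you call $P$ is not $WAW^{-1}$ for your $W$ (a direct computation gives super-diagonal entries $e^{-\ib\theta}$, not $e^{2\ib\theta}$). More seriously, steps (2)--(5) do not cohere: if $P_1$ were genuinely real-diagonal upper-triangular in $\mathrm{SL}(3,\H)$ then it would already be simple by \corref{cor-simple-uppertriangular-real-eigenvalue} and the count would be $1+2=3$, not $4$; in the factorizations that actually arise from peeling off a $2\times2$ block, the leftover $P_1$ carries a lone non-real diagonal entry $e^{\ib\theta}$, so it is \emph{not} simple (\propref{prop-simple-classification} forces non-real eigenvalues to occur with even multiplicity), and splitting $P_1$ further is precisely the step you leave undone. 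Your idea can be salvaged --- for instance $A=\mathrm{diag}(e^{\ib\theta},1,1)\cdot\bigl(1\oplus\begin{psmallmatrix}e^{\ib\theta}&1\\0&e^{\ib\theta}\end{psmallmatrix}\bigr)$ and then each factor splits into two simples --- but none of this is in the proposal.

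The paper's proof takes a different and more direct route, avoiding both the preliminary conjugation and any appeal to \cite[Theorem~1.4]{PS}. It writes $A$ itself as
\[
A=\mathrm{diag}(e^{\ib\theta},e^{\ib\theta},1)\cdot\mathrm{diag}(1,e^{\ib\theta/2},e^{\ib\theta/2})\cdot\mathrm{diag}(1,e^{-\ib\theta/2},e^{\ib\theta/2})\cdot\begin{pmatrix}1&e^{-\ib\theta}&0\\0&1&e^{-\ib\theta}\\0&0&1\end{pmatrix}.
\]
The first three factors are simple by \propref{prop-simple-classification} and \remref{rem-simple-complex-eigenvalues} (each has its non-real eigenvalue with multiplicity two), and the last is simple by \corref{cor-simple-uppertriangular-real-eigenvalue}. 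The half-angle trick --- splitting the scalar $e^{\ib\theta}\mathrm{I}_3$ into three simple diagonal pieces rather than two --- is what makes the tally come out to exactly four, and replaces the ``delicate combinatorial point'' you anticipated.
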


\begin{proof}
	In this case,  $A$ can be written as
	$$A=\begin{pmatrix}
		e^{\ib \theta} & 0 & 0\\0 & e^{\ib \theta} & 0\\0 & 0 & 1
	\end{pmatrix}\begin{pmatrix}
		1 & 0 & 0\\0 & e^{\ib \theta /2} & 0 \\0 & 0 & e^{\ib \theta /2}
	\end{pmatrix} \begin{pmatrix}
		1 & 0 & 0\\0 & e^{-\ib \theta /2} & 0 \\0 & 0 & e^{\ib \theta /2}
	\end{pmatrix} \begin{pmatrix}
		1 & e^{-\ib \theta} & 0\\0 & 1 & e^{-\ib \theta} & \\0 & 0 & 1
	\end{pmatrix}.$$ 
Since $\jb e^{-\ib \theta} \jb^{-1} = e^{\ib \theta}$, we can show   that  the diagonal matrix $\mathrm{diag}(1,e^{-\ib \theta/2}, e^{\ib \theta/2})$ is conjugate to $\mathrm{diag}(1,e^{\ib \theta/2}, e^{\ib \theta/2})$ in $\mathrm{SL}(3,\mathbb{H})$; see \remref{rem-simple-complex-eigenvalues}. Now, in view of \propref{prop-simple-classification} and \corref{cor-simple-uppertriangular-real-eigenvalue}, we can conclude that all four matrices on the right-hand side of the  above equation are simple.
This proves the lemma.
\end{proof}

Suppose that  $\lambda = |\lambda| 	e^{\ib \theta}$, $\mu = |\mu| e^{\ib\phi}$, and $\xi = |\xi | e^{\ib\psi}$  are  complex numbers with non-negative imaginary parts.
In the following, for  Lemmas \ref{lem:sd} and \ref{lem:slp}, assume that at least one of the $\lambda,\mu$ or, $\xi$ are not of unit modulus; otherwise, refer to Lemma \ref{lem:se}, \ref{lem:sep} and \ref{lem:set}. In Lemmas \ref{lem:sd} and \ref{lem:slp}, at least one of the $\lambda,\mu$ and $\xi$ are not in $\R$; otherwise, the matrix is real and hence simple.

\begin{lemma}
	\label{lem:sd}
	If the Jordan form of a matrix $A\in \mathrm{SL}(3,\mathbb{H})$ is
	$\begin{pmatrix}
		\lambda & 0 & 0\\0 & \mu & 0\\0 & 0 & \xi	\end{pmatrix}$ then it can be written as a product of $4$ simple matrices. 
\end{lemma}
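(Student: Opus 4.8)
The plan is to take $A=\mathrm{diag}(\lambda,\mu,\xi)$ (the property of being a product of $k$ simple matrices is conjugation-invariant), to split it as the product of a real diagonal matrix and a diagonal matrix all of whose eigenvalues have unit modulus, and then to invoke \lemref{lem:se} together with the fact that a real matrix is simple. Since $\Phi(A)=\mathrm{diag}(\lambda,\mu,\xi)\oplus\mathrm{diag}(\overline{\lambda},\overline{\mu},\overline{\xi})$, we have $\det_{\H}(A)=|\lambda\mu\xi|^{2}$, so the hypothesis $\det_{\H}(A)=1$ is equivalent to $|\lambda|\,|\mu|\,|\xi|=1$. Hence $D_{1}:=\mathrm{diag}(|\lambda|,|\mu|,|\xi|)$ is a real matrix lying in $\mathrm{SL}(3,\R)\subset\mathrm{SL}(3,\H)$, so $D_{1}$ is a single simple factor by \corref{cor-simple-uppertriangular-real-eigenvalue}; and $D_{2}:=\mathrm{diag}\bigl(e^{\ib\theta},e^{\ib\phi},e^{\ib\psi}\bigr)$ likewise has quaternionic determinant $1$ and, since $\theta,\phi,\psi\in[0,\pi]$, is already in the Jordan form treated by \lemref{lem:se}, so $D_{2}$ is a product of three simple matrices.

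First I would record the factorization $A=D_{1}D_{2}$: because $|\lambda|,|\mu|,|\xi|$ are real, they commute with $e^{\ib\theta},e^{\ib\phi},e^{\ib\psi}$, and $D_{1}D_{2}=\mathrm{diag}\bigl(|\lambda|e^{\ib\theta},|\mu|e^{\ib\phi},|\xi|e^{\ib\psi}\bigr)=\mathrm{diag}(\lambda,\mu,\xi)=A$. Substituting the two bounds above then exhibits $A$ as a product of $1+3=4$ simple matrices, which is the claim. (If $e^{\ib\theta},e^{\ib\phi},e^{\ib\psi}$ happen to all be $\pm1$ then $D_{2}$ is itself real and the count only improves, and if all of $\lambda,\mu,\xi$ are real then $A$ is real and simple outright; these are precisely the degenerate cases excluded by the standing hypotheses of this section, so the generic situation genuinely needs four factors.)

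I do not expect a serious obstacle: the lemma reduces cleanly to the unit-modulus case \lemref{lem:se} once the manifestly simple diagonal of moduli has been peeled off. The one point that demands a little care is the determinant bookkeeping — one must use the equivalence $\det_{\H}(A)=1 \Leftrightarrow |\lambda|\,|\mu|\,|\xi|=1$ to be sure that both $D_{1}$ and $D_{2}$ actually lie in $\mathrm{SL}(3,\H)$, so that \corref{cor-simple-uppertriangular-real-eigenvalue} and \lemref{lem:se} apply to them as stated.
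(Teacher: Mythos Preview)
Your proposal is correct and follows essentially the same approach as the paper: factor $A=\mathrm{diag}(|\lambda|,|\mu|,|\xi|)\cdot\mathrm{diag}(e^{\ib\theta},e^{\ib\phi},e^{\ib\psi})$, note the first factor is real (hence simple), and apply \lemref{lem:se} to the second. Your added remark that $|\lambda|\,|\mu|\,|\xi|=1$ ensures both factors lie in $\mathrm{SL}(3,\H)$ is a detail the paper leaves implicit.
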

\begin{proof} In this case, $A$ can be written as 
	\begin{align*}
		A=	\begin{pmatrix}
			\lambda & 0 & 0\\0 & \mu & 0\\0 & 0 & \xi	\end{pmatrix}
		&=\begin{pmatrix}
			|\lambda| & 0 & 0\\0 & |\mu| & 0\\0 & 0 & |\xi|	\end{pmatrix}\begin{pmatrix}
			e^{\ib \theta} & 0 & 0\\0 & e^{\ib\phi} & 0\\0 & 0 & e^{\ib \psi} 	\end{pmatrix} = PQ.
	\end{align*}
	Note that $P$ is a real matrix and hence a simple matrix in $\mathrm{SL}(3,\mathbb{H})$. Using \lemref{lem:se} we can write $Q$ as a product of $3$ simple matrices. This proves the lemma.
\end{proof}

\begin{lemma}
	\label{lem:slp}
	If the Jordan form of a matrix $A\in \mathrm{SL}(3,\mathbb{H})$ is
	$\begin{pmatrix}
		\lambda & 1 & 0\\0 & \lambda & 0\\0 & 0 & \xi
	\end{pmatrix}$ then it can be written as a product of $4$ simple matrices. 
\end{lemma}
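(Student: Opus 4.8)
The plan is to proceed exactly as in the proof of \lemref{lem:sd}: peel off the moduli of the eigenvalues as a single real --- hence simple --- diagonal factor, and recognise the residual unit-modulus matrix as one already handled by \lemref{lem:sep}.

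First I would normalise. Writing $\lambda=|\lambda|e^{\ib\theta}$ and $\xi=|\xi|e^{\ib\psi}$ with $\theta,\psi\in[0,\pi]$, the complex adjoint $\Phi(A)$ has eigenvalues $\lambda,\lambda,\xi,\overline{\lambda},\overline{\lambda},\overline{\xi}$, so $\det_{\H}(A)=\det\Phi(A)=|\lambda|^{4}|\xi|^{2}$, and $\det_{\H}(A)=1$ forces $|\lambda|^{2}|\xi|=1$. In particular the real diagonal matrix $P:=\mathrm{diag}(|\lambda|,|\lambda|,|\xi|)$ lies in $\mathrm{SL}(3,\mathbb{R})\subset\mathrm{SL}(3,\mathbb{H})$, so $P$ is simple. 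Then I would factor
$$A=\begin{pmatrix}\lambda & 1 & 0\\0 & \lambda & 0\\0 & 0 & \xi\end{pmatrix}=\begin{pmatrix}|\lambda| & 0 & 0\\0 & |\lambda| & 0\\0 & 0 & |\xi|\end{pmatrix}\begin{pmatrix}e^{\ib\theta} & |\lambda|^{-1} & 0\\0 & e^{\ib\theta} & 0\\0 & 0 & e^{\ib\psi}\end{pmatrix}=PQ,$$
with $Q=P^{-1}A\in\mathrm{SL}(3,\mathbb{H})$.

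Since $|\lambda|^{-1}\neq 0$, the Jordan form of $Q$ is precisely $\begin{pmatrix}e^{\ib\theta} & 1 & 0\\0 & e^{\ib\theta} & 0\\0 & 0 & e^{\ib\psi}\end{pmatrix}$, the matrix treated in \lemref{lem:sep}. By the running assumption for \lemref{lem:slp}, at least one of $\lambda,\xi$ is non-real, and hence at least one of $e^{\ib\theta},e^{\ib\psi}$ differs from $\pm 1$; thus \lemref{lem:sep} applies and shows that this Jordan form is a product of three simple matrices. As $Q$ is conjugate in $\mathrm{SL}(3,\mathbb{H})$ to its Jordan form (\remref{rem-conjugacy-SL(n,H)}), and a conjugate of a simple matrix is simple while conjugation distributes over products, $Q$ is itself a product of three simple matrices. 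Hence $A=PQ$ is a product of $1+3=4$ simple matrices, which is the claim.

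I do not anticipate a serious obstacle: the computation is routine, and the substantive content is imported from \lemref{lem:sep} and \propref{prop-simple-classification}. The one point to watch, exactly as in the neighbouring lemmas, is the bookkeeping of degenerate sub-cases --- one must confirm that the residual factor $Q$ is genuinely of the type covered by \lemref{lem:sep} and does not fall into its excluded ``all eigenvalues equal to $\pm 1$'' situation; if it did, $Q$ would be a real upper-triangular matrix, hence simple by \corref{cor-simple-uppertriangular-real-eigenvalue}, and $A$ would be a product of only two simple matrices --- still within the asserted bound.
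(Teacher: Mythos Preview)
Your proof is correct and follows the same overall strategy as the paper---split $A$ into a ``modulus part'' times an ``argument part'' and invoke an earlier lemma for the latter---but with a small variation in where the off-diagonal entry lands. The paper absorbs the $1$ into the \emph{first} factor, writing
\[
A=\begin{pmatrix}|\lambda| & e^{-\ib\theta} & 0\\0 & |\lambda| & 0\\0 & 0 & |\xi|\end{pmatrix}\begin{pmatrix}e^{\ib\theta} & 0 & 0\\0 & e^{\ib\theta} & 0\\0 & 0 & e^{\ib\psi}\end{pmatrix}=PQ,
\]
so that $P$ is upper-triangular with real diagonal (hence simple by \corref{cor-simple-uppertriangular-real-eigenvalue}) and $Q$ is \emph{diagonal} with unit-modulus entries, handled by \lemref{lem:se} rather than \lemref{lem:sep}. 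You instead push the nilpotent part into $Q$, making $P$ purely diagonal and then appealing to \lemref{lem:sep} for the non-diagonal $Q$ after passing to its Jordan form. Both routes yield $1+3=4$ simple factors; the paper's choice avoids the extra conjugation step you need to bring $Q$ to Jordan form, while yours makes the simplicity of $P$ completely obvious. The difference is cosmetic.
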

\begin{proof} In this case, $A$ can be written as 
	$$A=\begin{pmatrix}
		\lambda & 1 & 0\\0 & \lambda & 0\\0 & 0 & \xi
	\end{pmatrix}=\begin{pmatrix}
		|\lambda| & e^{-\ib\theta} & 0\\0 & |\lambda| & 0\\0 & 0 & |\xi|
	\end{pmatrix}\begin{pmatrix}
		e^{\ib \theta} & 0 & 0\\0 & e^{\ib \theta} & 0\\0 & 0 & e^{\ib \psi}
	\end{pmatrix}= PQ.$$ 
	The matrix $P$  is conjugate to a real matrix $\begin{pmatrix}
		|\lambda| & 1 & 0\\0 & |\lambda| & 0\\0 & 0 & |\xi|
	\end{pmatrix}$ and hence $P$ is a simple matrix; see  \corref{cor-simple-uppertriangular-real-eigenvalue}.
	Using \lemref{lem:se} we can write $Q$ as a product of $3$ simple matrices. This proves the lemma. 
\end{proof}

Now the  Table \ref{table:1} follows from   Lemmas \ref{lem:se},   \ref{lem:sep},  \ref{lem:set},  \ref{lem:sd}, \ref{lem:slp}. We will use the above results to prove  Theorem \ref{th:simple4}.
\subsection{Proof of \thmref{th:simple4}}
Let $A \in \mathrm{SL}(3,\mathbb{H})$. Then, using the Jordan decomposition, $A$ is conjugate in $\mathrm{SL}(3,\mathbb{H})$ to one of the Jordan forms considered in this section. Hence, in view of Lemmas \ref{lem:se}, \ref{lem:sep}, \ref{lem:set}, \ref{lem:sd}, and \ref{lem:slp}, we can write $A$ as a product of $4$ simple matrices. This completes the proof. \qed

\section{Classification in $\mathrm{SL}(3,\mathbb{R})$} \label{sec:sl3r}
In this section, we will prove Theorem \ref{th:class1} by examining the subgroup of $\mathrm{SL}(3,\mathbb{H})$ consisting of transformations with real entries and determinant one. Let $A \in \mathrm{SL}(3,\mathbb{R}) \subset \mathrm{SL}(3,\mathbb{H})$. Since ${\rm det_{\H}(A)}={\rm det(\Phi(A))}$, it follows that ${\rm det_{\H}(A)}=(\mathrm{det}(A))^2=1$, and thus $\mathrm{det}(A)=\pm 1$. We will identify $\mathrm{SL}(3,\mathbb{R})$ with the component of transformations with real entries and $\mathrm{det}(A)=1$.

Let $A\in  \mathrm{SL}(3,\mathbb{R})$. Then  the characteristic polynomial of $A$ is $\chi_A(t)=t^3-xt^2+yt-1$, where $x=\text{tr}(A)$ and  $y=\text{tr}(A^{-1})$.   If $\lambda_1,\lambda_2,\lambda_3$ are the eigenvalues of $A$, then we have  $x=\lambda_1+\lambda_2+\lambda_3$,  $y=\lambda_1^{-1}+\lambda_2^{-1}+\lambda_3^{-1}$, and  $\mathrm{det}(A)= \lambda_1 \lambda_2 \lambda_3 =1$. In the next lemma, we will provide a characterization of the eigenvalues of $A$ in terms of $x$ and $y$.

\begin{lemma}\label{lem:xy}
	All the eigenvalues of an element $A\in  \mathrm{SL}(3,\mathbb{R})$ are of unit modulus if and only if $x=y$,  where $x=\text{tr}(A)$ and $y=\text{tr}(A^{-1})$.
\end{lemma}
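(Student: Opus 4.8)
The plan is to prove the two implications of the equivalence separately, working throughout with the eigenvalue relations already recorded before the lemma: $x = \lambda_1 + \lambda_2 + \lambda_3$, $y = \lambda_1^{-1} + \lambda_2^{-1} + \lambda_3^{-1}$, and $\lambda_1\lambda_2\lambda_3 = 1$, and using that both $x = \mathrm{tr}(A)$ and $y = \mathrm{tr}(A^{-1})$ are real because $A$ and $A^{-1}$ have real entries.

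For the implication that unit moduli force $x = y$, I would argue directly. If $|\lambda_i| = 1$ for every $i$, then $\lambda_i^{-1} = \overline{\lambda_i}$, so $y = \sum_i \lambda_i^{-1} = \sum_i \overline{\lambda_i} = \overline{x}$; since $x$ is real this gives $y = x$. This is the short half of the statement.

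For the converse I would start from $\chi_A(1) = 1 - x + y - 1 = y - x$, so that $x = y$ forces $\chi_A(1) = 0$ and hence $1$ is an eigenvalue. Factoring, $\chi_A(t) = (t-1)\bigl(t^2 - (x-1)t + 1\bigr)$, and the two remaining eigenvalues $\mu, \nu$ satisfy $\mu\nu = 1$ and $\mu + \nu = x - 1$. If this pair is non-real, then, the quadratic factor having real coefficients, $\nu = \overline{\mu}$, and together with $\mu\nu = 1$ this yields $|\mu|^2 = \mu\overline{\mu} = 1$; thus $\mu, \nu$ lie on the unit circle and, with the eigenvalue $1$, all three eigenvalues have unit modulus.

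The hard part will be the case where $\mu, \nu$ are real, i.e. a reciprocal pair $\{\mu, \mu^{-1}\}$: here unit modulus holds only when $\mu = \pm 1$, whereas a real $\mu$ with $|\mu| \neq 1$ (for instance the eigenvalues $1, 2, \tfrac12$ of $\mathrm{diag}(1,2,\tfrac12)$) still satisfies $x = y$ while lying off the unit circle. Excluding this configuration is exactly the crux of the converse, and it cannot be achieved from the trace identity $x = y$ alone; one must force the quadratic factor $t^2 - (x-1)t + 1$ to have non-real roots, i.e. impose the discriminant bound $(x-1)^2 \le 4$. I therefore expect the proof to require the sign of the discriminant of $\chi_A$ — the function $f$ appearing in \thmref{th:class1} — as an additional input alongside the trace condition, and it is this discriminant restriction, rather than any further quaternionic bookkeeping, that will carry the weight of the converse.
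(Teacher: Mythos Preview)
Your forward implication is correct and in fact cleaner than the paper's case-by-case argument: the observation $\lambda_i^{-1}=\overline{\lambda_i}$ gives $y=\overline{x}=x$ immediately.

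Your diagnosis of the converse is also correct, and more to the point, your counterexample is genuine: for $A=\mathrm{diag}(1,2,\tfrac12)\in\mathrm{SL}(3,\mathbb{R})$ one has $x=y=\tfrac72$ while the eigenvalues $2$ and $\tfrac12$ are not of unit modulus. So the lemma is \emph{false as stated}, and no proof strategy can rescue it without an extra hypothesis. The paper's own proof of the converse has exactly the gap you located: it writes the eigenvalues as $r^{-2},\,re^{\ib\theta},\,re^{-\ib\theta}$ with $r>0$ and $\theta\in[0,\pi]$, a parametrization that is valid when $\chi_A$ has a non-real root (one real eigenvalue and a conjugate pair) but fails for three distinct real eigenvalues such as $1,2,\tfrac12$. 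The algebraic manipulation $r+r^{-1}=2\cos\theta$ that follows is therefore applied only to a proper subset of the cases.

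Your instinct about the fix is the right one. In the paper the lemma is invoked only inside the proof of \thmref{th:class1} under the standing assumption $f(x,y)<0$, i.e.\ precisely when $\chi_A$ has a pair of non-real conjugate roots; in that regime your argument (and the paper's) goes through, since $\mu\nu=1$ together with $\nu=\overline{\mu}$ forces $|\mu|=1$. So the correct statement is: \emph{if $f(x,y)\le 0$, then all eigenvalues have unit modulus if and only if $x=y$} (the boundary case $f(x,y)=0$ with $x=y$ also works, since then the repeated real root must be $\pm1$). You were right to expect the discriminant to carry the weight; that hypothesis is missing from the lemma but present everywhere the lemma is used.
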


\begin{proof}
	Suppose  all the eigenvalues of $A$ are $\lambda_1,\lambda_2,\lambda_3$ and they are of unit modulus. Since the characteristic polynomial $\chi_A(t)$ has odd degree, it has at least one real root.  Suppose $\lambda_1\in \R$. Then we have $\lambda_1=\pm 1$ and $\lambda_1=\lambda_1^{-1}$. If $\lambda_2$ and $\lambda_3$ are also real  then we have $\lambda_i=\lambda_i^{-1}$ for $i=2,3$. Consequently, we get $x=y$. Assuming $\lambda_{2}=e^{\ib\theta}$, it follows that $\lambda_{3}=e^{-\ib\theta}$. Hence, $\lambda_2+\lambda_3=e^{\ib \theta}+e^{-\ib \theta}= \lambda_2^{-1} + \lambda_3^{-1}$, which implies that $x=y$.
	
	Conversely, let us assume that $x=y$. Suppose the eigenvalues of $A$ are $\lambda_1,\lambda_2,$ and $\lambda_3$, given by $r^{-2},re^{\ib\theta},$ and $re^{-\ib\theta}$, respectively, where $r$ is a positive real number and $\theta\in [0,\pi]$. If $r=1$, we are done.
	However, if $r$ is not equal to $1$, then note that $x=y$ implies $r^{-2}+2r\cos\theta=r^2+2r^{-1}\cos\theta$. By suitably pairing the terms, we obtain $r+r^{-1}=2\cos\theta$ since $r-r^{-1} \neq 0$. Therefore, $r=e^{\pm \ib\theta}$. However, since $r$ is a positive real number, the only possibility is $r=1$. Hence, this contradicts the assumption that $r$ is not equal to $1$. Therefore, $r=1$, and the eigenvalues are $1, e^{\pm \ib\theta}$.
\end{proof}

Let $R(\chi_A,\chi_A')$ be the resultant of polynomials $\chi_A$ and $\chi_A'$, defined as the determinant of the \textit{Sylvester matrix}:
$$ R(\chi_A,\chi_A')=\begin{vmatrix}
	1 & -x & y & -1 & 0\\0 & 1 & -x & y & -1\\3 & -2x & y & 0 & 0\\
	0 & 3 & -2x & y & 0\\0 & 0 & 3 & -2x & y
\end{vmatrix} = -x^2y^2+4(x^3+y^3)-18xy+27.$$ 
Let $\triangle(\chi_A)$ be the  discriminant  of $\chi_A$.  Then we have 
$$\triangle(\chi_A) =  R(\chi_A,\chi_A') = \prod_{1\leq i<j\leq 3} (\xi_i-\xi_j)^2, $$
where $\xi_i$'s are the roots of $\chi_A(t)$; see \cite[Chapter 3]{Co}.
Note that $R(\chi_A,\chi_A')=0$ if and only if $\chi_A$ and $\chi_{A}'$ have a common root, i.e., $\chi_A$ has a multiple root. 	We will now prove  \thmref{th:class1}.

\subsection{Proof of Theorem \ref{th:class1}} Here,  we have $ f(x,y) =  R(\chi_A,\chi_A')$ and $\chi_A(t)= t^3-xt^2+yt-1$. Note that $\chi_A$ is a degree $3$ monic polynomial with real coefficients. Therefore, if $re^{\ib \theta}$ is a root of $\chi_A$  then  $re^{-\ib \theta}$ is also  a root of   $\chi_A$. 
Since  $f(x,y)=  R(\chi_A,\chi_A')= \prod_{1\leq i<j\leq 3} (\xi_i-\xi_j)^2, $ where $\xi_i$'s are roots of $\chi_A(t)$, it is easy to observe that the following statements are true.
\begin{enumerate}
	\item [(\textit{i})] $f(x,y)>0$ if and only if  $\chi_A$  has $3$ distinct real  roots.
	\item  [(\textit{ii})]  $f(x,y)<0$ if and only if  $\chi_A$  has $1$ real  root and $2$ conjugate complex roots.
	\item [(\textit{iii})] $f(x,y)=0$ if and only if  $\chi_A$ has a multiple root.
\end{enumerate}

Suppose $A \in \mathrm{SL}(3,\mathbb{R})$ is a regular loxodromic. Then, all the eigenvalues of $A$ are real numbers with distinct moduli. It follows that $f(x,y)>0$.
Conversely, suppose $f(x,y)>0$. Then, all roots of $\chi_A$ are distinct real numbers. Therefore, $A$ is diagonalizable. Since all eigenvalues have distinct moduli, $A$ is a regular loxodromic.
Hence, we conclude that $A$ is a regular loxodromic if and only if $f(x,y)>0$.

Assuming $f(x,y)<0$, we can conclude that $\chi_A$ has $1$ real root and $2$ conjugate non-real roots. By applying Lemma \ref{lem:xy}, it follows that $x=y$ if and only if all the eigenvalues are of unit modulus. Therefore, we can conclude that $A$ is a regular elliptic if and only if $f(x,y)<0$, $x=y$.
Similarly, we can deduce  that $A$ is a screw loxodromic if and only if $f(x,y)<0$, $x\neq y$.

We can consider the following cases for $f(x,y)=0$.
\begin{enumerate}
	\item If
	$f(x,y)=0$ and $x \neq y$,  then $\chi_A$ has a multiple root and at least one root of non-unit modulus. In this case,  $A$ is a homothety or a loxo-parabolic, depending on whether it is diagonalizable or not.
	\item  If $f(x,y)=0$ and $x=y$,   then $\chi_A$ has a multiple root, and all roots have unit modulus. Thus  eigenvalues of $A$ are either $1$ or $-1$.  There are a total of four such non-trivial elements up to conjugacy, and they are given by:
	\begin{equation*}
		\begin{pmatrix}
			-1 & 0 & 0\\0 & -1 & 0\\0 & 0 & 1
		\end{pmatrix}, \; \begin{pmatrix}
			1 & 1 & 0\\0 & 1 & 0\\0 & 0 & 1
		\end{pmatrix},\;\begin{pmatrix}
			1 & 1 & 0\\0 & 1 & 1\\0 & 0 & 1
		\end{pmatrix}\text{ and }\begin{pmatrix}
			-1 & 1 & 0\\0 & -1 & 0\\0 & 0 & 1
		\end{pmatrix}.
	\end{equation*}
	Out of the four matrices mentioned above, the last three matrices are not diagonalizable. The first matrix is diagonal and is classified as an elliptic-reflection (or reflection). The second and third matrices are categorized as unipotent matrices and have minimal polynomials equal to $(x-1)^2$ and $(x-1)^3$, respectively. The second and third matrices are also known as vertical translation and non-vertical translation, respectively; see Section \ref{sec-prel}. Finally, the last one is an ellipto-parabolic and has a minimal polynomial equal to $(x-1)(x+1)^2$.
\end{enumerate} 
This completes the proof. \qed

\section{Concluding remarks}\label{sec-final-class}
In this section, we propose an algorithm for classifying the transformations in $\mathrm{SL}(3,\mathbb{H})$.  The classification of a general element in $\mathrm{SL}(3,\H)$  can be seen as follows:
\begin{enumerate}
	\item \textit{Reversible elements:} Let $A \in \mathrm{SL}(3,\H)$ be a reversible element. Then the characteristic polynomial $\chi_{\Phi (A)}$ of $A$ is self-dual, i.e., it can be expressed as $\chi_{\Phi (A)}=x^6-a x^5 + b x^4 -c x^3+b x^2 -a x +1$, where $a,b,c \in \R$.
	Therefore,  we can use the approach of \cite{KG}. Hence, the classification of reversible elements of $\mathrm{SL}(3,\H)$ in terms of coefficients of their characteristic polynomial will follow. 
	\item \textit{Non-reversible elements:}   In view of the \thmref{thm-rev-GL(3,H)}, it follows that all the non-reversible elements of $\mathrm{SL}(3,\H)$ are loxodromic,  and up to conjugacy,  they are listed in \remref{remark_non-rev-SL(3,H)}. 
\end{enumerate}

We classify the simple elements of $ \mathrm{SL}(3,\mathbb{H})$ by the following algorithm:
\begin{enumerate}
	\item  [(\textit{i})]First, check whether the matrix $A\in \mathrm{SL}(3,\mathbb{H})$ is simple or not by using the \propref{prop-simple-classification}.
	\item [(\textit{ii})] If $A$ is simple, then find a matrix $B\in  \mathrm{SL}(3,\mathbb{R})$ which is  conjugate to $A$.
	\item [(\textit{iii})] Find the classification of $B$ by Theorem \ref{th:class1}.
	\item  [(\textit{iv})] Finally, conclude that $A$ is elliptic, parabolic, or loxodromic according to the classification of $B$.
\end{enumerate}

\textbf{Acknowledgement.} 
The authors would like to thank J. Kim for his comments on the first draft of this paper. It is a great pleasure to thank the referee for carefully reading the paper and for providing many valuable comments.
	
Gongopadhyay is partially supported by the SERB core research grant CRG/2022/003680. Lohan acknowledges full support from the CSIR SRF grant,  file no.:  09/947(0113)/2019-EMR-I, during the course of this work.

\end{document}